\documentclass[a4paper, 12pt,reqno]{amsart}

\usepackage[margin=1in]{geometry, stmaryrd}
\usepackage{bbm}
\usepackage{amssymb,latexsym}
\usepackage{graphicx}
\usepackage{tikz}
\usepackage{mathtools}
\usepackage{color}
\usepackage[hyphens]{url}
\usepackage[T1]{fontenc}
\usepackage{amsmath}
\usepackage{mathdots}
\usepackage[toc,page]{appendix}
\usepackage{url}    
\usepackage{hyperref}
\usepackage{breakurl}
\usepackage{comment}
\usepackage{mathrsfs}
\usepackage{fancyhdr} 
\usepackage{leftidx}
\allowdisplaybreaks
\DeclareMathOperator{\GL}{GL}

\DeclareMathOperator{\Hom}{Hom}
\DeclareMathOperator{\N}{N}
\DeclareMathOperator{\A}{A}

\DeclareMathOperator{\E}{E}
\DeclareMathOperator{\F}{F}

\DeclareMathOperator{\K}{K}
\DeclareMathOperator{\U}{U}

\DeclareMathOperator{\J}{J}

\DeclareMathOperator{\W}{W}
\DeclareMathOperator{\C}{C}
\DeclareMathOperator{\f}{f}
\DeclareMathOperator{\e}{e}
\renewcommand\L{\operatorname{L}}
\DeclareMathOperator{\tr}{tr}
\DeclareMathOperator{\Mat}{Mat}
\DeclareMathOperator{\Ind}{Ind}
\DeclareMathOperator{\ind}{ind}

\DeclareMathOperator{\vol}{vol}
\DeclareMathOperator{\val}{val}
\DeclareMathOperator{\Gal}{Gal}
\DeclareMathOperator{\disc}{disc}
\DeclareMathOperator{\Res}{Res}

\newcommand{\sfrac}{\genfrac{}{}{}1}

\newcommand{\bigslant}[2]{%
  \raisebox{.2em}{$#1$}\left/\raisebox{-.2em}{$#2$}\right.%
}

\newcommand{\bigbackslant}[2]{%
  \raisebox{-.2em}{$#1$}\left\backslash\raisebox{.2em}{$#2$}\right.%
}

\newcommand{\smallbackslant}[2]{%
  \raisebox{-.08em}{$\scriptstyle #1$}\,
  \backslash\,
  \raisebox{.08em}{$\scriptstyle #2$}%
}

\newcommand{\bigdoublecoset}[3]{%
  \raisebox{-.2em}{$#1$}
  \left\backslash
  \raisebox{.2em}{$#2$}
  \middle/
  \raisebox{-.2em}{$#3$}
  \right.%
}

\newcommand{\smalldoublecoset}[3]{%
  \raisebox{-.08em}{$\scriptstyle #1$}\!
  \backslash\!
  \raisebox{.08em}{$\scriptstyle #2$}\!
  /\,%
  \raisebox{-.08em}{$\scriptstyle #3$}%
}

\theoremstyle{plain}
\numberwithin{equation}{section}
\newtheorem{theorem}[equation]{Theorem}
\newtheorem{lemma}[equation]{Lemma}
\newtheorem{corollary}[equation]{Corollary}

\newtheorem{definition}[equation]{Definition}
\newtheorem{proposition}[equation]{Proposition}

\newtheorem{remark}[equation]{Remark}

\title{T\lowercase{he} L\lowercase{ocal} L\lowercase{anglands} C\lowercase{orrespondence} \lowercase{for} M\lowercase{iddle} S\lowercase{upercuspidal} R\lowercase{epresentations} \lowercase{of} $p$\lowercase{-adic} $\GL(2n)$}

\author{D\lowercase{avid} C. L\lowercase{uo} \lowercase{and}  S\lowercase{haun} S\lowercase{tevens}}
\date{}

\address{School of Mathematics, University of Minnesota, Minneapolis, MN 55455, United States} \email{luo00275@umn.edu}

\address{School of Engineering, Mathematics and Physics, University of East Anglia, Norwich Research Park, Norwich NR4 7TJ, United Kingdom} \email{Shaun.Stevens@uea.ac.uk}

\subjclass{22E50, 11F70}

\keywords{local Langlands correspondence, supercuspidal representation, local Langlands parameter, twisted gamma factor.}

\begin{document}

\begin{abstract}
    Let $\F$ be a non-archimedean local field of characteristic zero with residual characteristic $p$.~In this paper we give an explicit description of the local Langlands correspondence for middle supercuspidal representations of $\GL(2n,\F)$, under the tameness condition $p\nmid 2n$, in terms of the maximal simple types that define them.~We achieve this by explicitly computing and comparing the gamma factors on the automorphic and Galois sides of the local Langlands correspondence.~The computation on the automorphic side requires neither the tameness condition nor the characteristic zero condition.
\end{abstract}

\maketitle

\section{Introduction}

Let $\F$ be a non-archimedean local field of characteristic zero with residual characteristic $p$, and let $\GL(n, \F)$ denote the general linear group of degree $n$ with entries in $\F$.~The local Langlands correspondence for $\GL(n, \F)$, established by Harris--Taylor and Henniart, is a fundamental result relating the representation theory of $\GL(n, \F)$ to Galois-theoretic data \cite{Harris, Henniart1}.~More precisely, it gives a bijection between isomorphism classes of smooth irreducible representations of $\GL(n, \F)$ and equivalence classes of $n$-dimensional Weil--Deligne representations (called \textit{local Langlands parameters}) of $\F$.~Under this correspondence, isomorphism classes of supercuspidal representations correspond precisely to equivalence classes of irreducible $n$-dimensional representations of the Weil group $\W_{\F}$ of $\F$.~Although the existence of this correspondence is known in full generality, making the bijection explicit remains a subtle problem.

Through a series of papers, Bushnell and Henniart answered this question for essentially tame supercuspidal representations of $\GL(n, \F)$, in terms of admissible pairs and the maximal simple types defined by them \cite{BH-4, BH-5, BH-6}.~In particular, the essentially tame setting includes the case when $p \nmid n$.~Beyond this situation, Bushnell and Henniart also gave an explicit description of the Langlands parameters for \textit{epipelagic}, or \textit{simple}, \textit{supercuspidal representations} of $\GL(n, \F)$, with no restriction on $p$ (see also~\cite{Adrian-Liu} for the case when $p \nmid n$) \cite{BH-1}.~Furthermore, they provide a mechanism to deduce an explicit correspondence in general from the totally wild case (when $n$ is a power of $p$) \cite{BH-8}.

Recently, the authors introduced a new class of depth $\frac{1}{n}$ supercuspidal representations of $\GL(2n, \F)$, called \textit{middle supercuspidal representations} \cite{Luo}.~These representations are minimax in the sense of \cite{Adrian}, and they have the next smallest positive depth after the simple supercuspidal representations.~Alternatively, in terms of the Galois side of the local Langlands correspondence, they are the depth $\frac{1}{n}$ supercuspidal representations whose Langlands parameter has multiplicity-free restriction to wild inertia and twisting number two (i.e. there are precisely two unramified quasi-characters of~$\F^{\times}$ which leave the parameter invariant by twisting).

The goal of this article is to explicitly describe the local Langlands correspondence for essentially tame middle supercuspidal representations of $\GL(2n, \F)$ using computations of twisted gamma factors.~More precisely, the main contribution is to give a closed formula for certain \textit{twisted gamma factors} -- recall that middle supercuspidal representations are determined up to isomorphism by their \textit{twisted gamma factors} with tamely ramified quasi-characters of $\F^\times$ and simple supercuspidal representations of $\GL(n,\F)$ \cite[Theorem 1.1]{Luo}.~As a result, in the tame setting we give a closed formula for the admissible quasi-character whose induction gives the corresponding Langlands parameter, in terms of the type-theoretic data of the middle supercuspidal representation.

On the automorphic side, we assume no characteristic condition on $\F$, nor any tameness conditions on $p$.~We compute the relevant twisted gamma factors using carefully chosen Whittaker functions in the Whittaker models of simple and middle supercuspidal representations.~Moving to the Galois side, we assume the tameness condition $p\nmid 2n$.~We then compute the corresponding gamma factors attached to Langlands parameters by realizing these counterparts as irreducible representations induced from a character, and decomposing their tensor products over suitable double cosets.~Comparing the two sets of formulas then determines the quasi-character appearing in the induced irreducible Weil representation associated with a middle supercuspidal representation.

To explain our result precisely, we need to introduce some notation.~Let $\mathcal{O}_{\F}$ denote the valuation ring of $\F$ and let $k_{\F}$ be its residue field with characteristic $p$.~Furthermore, we denote the group of roots of unity of order prime to $p$ by $\mu_{\F}'$.~Lastly, we fix $\psi_{\F}$ to be an additive quasi-character of level one and $\varpi_{\F}$ a uniformizer of $\F$.

We parametrize isomorphism classes of middle supercuspidal representations by triples $\left(\bar f, \, \chi, \, \zeta \right)$, where $\bar{f}$ is a monic irreducible polynomial of degree two over $k_{\F}$, $\chi$ is a multiplicative quasi-character of the degree two extension 
\[
k_{\bar{f}} = \bigslant{k_{\F}[X]}{\left\langle \, \bar{f} \, \right\rangle}
\]
of $k_{\F}$, and $\zeta \in \mathbb{C}^{\times}$.~From $f \in \mathcal{O}_{\F}[X]$ a lift of $\bar{f}$, we obtain a middle supercuspidal representation $\pi_{(f, \, \chi, \, \zeta)}$ whose isomorphism class depends only on $(\bar{f}, \, \chi, \, \zeta)$.~We also associate to $f$ a tower of extensions
\[
\F \subset \text{L}_{f} \subset \E_{f} 
\]
where $\text{L}_{f}/\F$ is unramified of degree two,
$\E_f/\text{L}_f$ is totally ramified of degree $n$, together with a particular uniformizer $\varpi_{\E_{f}}$ (such that~$\varpi_{E_f}^{-n}\varpi_F$ has minimal polynomial~$f$).

Let $\lambda_{\E_{f}/\F}(\psi_{\F})$ denote the Langlands constant and $\varkappa_{\E_{f}/\text{L}_{f}}$ the quadratic quasi-character of $\W_{\text{L}_{f}}$ associated to the extension~$\E_f/\text{L}_f$ (see~\eqref{eqn:varkappa}).~Decomposing $\E_{f}^{\times}$ as
\[
    \E_f^{\times}
    =
    \langle\varpi_{\E_{f}}\rangle
    \times \mu_{\normalfont{\text{L}_{f}}}'
    \times\left(1+\mathcal{P}_{\E_f}\right),
\]
we state our main result. 

\begin{theorem}\label{TheoremMain}
Let $\pi_{(f, \, \chi, \, \zeta)}$ be an essentially tame middle supercuspidal representation of $\GL(2n, \F)$, and let $\Phi_{(f, \, \chi, \, \zeta)}$ denote its Langlands parameter.~Then
\[
   \Phi_{(f, \, \chi, \, \zeta)} = \Ind_{\W_{\E_{f}}}^{\W_{\F}} \, \xi_{(f, \, \chi, \, \zeta)}
\]
where $\xi := \xi_{(f, \, \chi, \, \zeta)}$ is the quasi-character of $\E_{f}^\times$ determined as
follows.

\vspace{0.1in}

\begin{enumerate}
\item First,
\[
   \xi(\varpi_{\E_{f}}) = \zeta^{-1} \cdot \lambda_{\E_{f}/\F}(\psi_{\F}) \,  .
\]
\item Second, 
\[
   \xi \, \big|_{\, \mu_{\normalfont{\text{L}_{f}}}'}
   =
   \chi \otimes \left(\varkappa_{\E_{f}/{\normalfont \text{L}_{f}}} \, \big|_{\, \mu_{\normalfont{\text{L}_{f}}}'} \right) . 
\]
\item Finally, for $x \in \mathcal{P}_{\E_{f}}$, 
\[
   \xi(1+x) 
   =
   \psi_{\F} \circ \tr_{\E_{f}/\F}\left(\varpi_{\E_{f}}^{-1} \, x\right) .
\]
\end{enumerate}
\end{theorem}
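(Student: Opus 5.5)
The proof will rest on the converse theorem \cite[Theorem 1.1]{Luo}. A middle supercuspidal representation is determined up to isomorphism by its twisted gamma factors $\gamma(s,\pi\times\tau,\psi_{\F})$, where $\tau$ ranges over tamely ramified quasi-characters of $\F^\times$ and simple supercuspidal representations of $\GL(n,\F)$. Since the local Langlands correspondence preserves these gamma factors, it suffices to find an admissible pair $(\E_f/\F,\xi)$ with the following property: for every such $\tau$, with parameter $\sigma_\tau$, we need
\[
\gamma\bigl(s,\Ind_{\W_{\E_f}}^{\W_{\F}}\xi\otimes\sigma_\tau,\psi_{\F}\bigr)
=
\gamma\bigl(s,\pi_{(f,\chi,\zeta)}\times\tau,\psi_{\F}\bigr).
\]
We must also check that $\Ind\xi$ is the parameter of some middle supercuspidal representation, which then has to be $\pi_{(f,\chi,\zeta)}$. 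For this last point, essential tameness and Bushnell--Henniart's tame parametrization show that the parameter is induced from a character of a tamely ramified degree $2n$ extension. The depth-$\frac1n$, multiplicity-free, twisting-number-two conditions then force that extension to be of the form $\E/\text{L}/\F$, with $\text{L}/\F$ unramified quadratic and $\E/\text{L}$ totally ramified of degree $n$. Moreover, the character on $1+\mathcal{P}_{\E}$ must be $\psi_{\F}\circ\tr(\varpi_{\E}^{-1}\,\cdot)$ for a uniformizer whose associated polynomial has reduction $\bar f$; here I use that $p\nmid 2n$ to parametrize $\E$ by a uniformizer of this shape. This already gives item (3) and pins down $\E=\E_f$, leaving $\xi(\varpi_{\E_f})$ and $\xi|_{\mu'_{\text{L}_f}}$ as the unknowns.

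\textbf{Automorphic side.} I would compute $\gamma(s,\pi\times\tau,\psi_{\F})$ via the Rankin--Selberg (Jacquet--Piatetski-Shapiro--Shalika) local functional equation. I would pick an explicit Whittaker function $W_\pi$, supported on $\N\cdot\J$ (with $\J$ the compact-mod-centre group of the maximal simple type) and built from the type data $(f,\chi,\zeta)$. For $\tau$ simple supercuspidal, I would similarly use the Whittaker function of its type. The zeta integrals then collapse to finite sums over a few double cosets. The resulting formula should express the gamma factor as $\zeta$ times a Gauss sum for $\chi$ over $k_{\bar f}$, together with a factor depending on the parameter of $\tau$. For $\tau$ a tame character one uses the $\GL(2n)\times\GL(1)$ integrals in the same way.

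\textbf{Galois side.} I would write $\sigma_\tau$ as well. For a tame character it is immediate. For a simple supercuspidal $\tau$, use the $p\nmid n$ description: induce from a totally ramified degree-$n$ extension, following Bushnell--Henniart or Adrian--Liu. Then decompose $\Ind\xi\otimes\sigma_\tau$ by Mackey over the double cosets $\W_{\E_f}\backslash\W_{\F}/\W_{\E_\tau}$. The gamma factors become products of Tate gamma factors of characters of compositum fields, and these are evaluated by standard Gauss sum formulas and Deligne's inductivity of epsilon factors. The Langlands constants $\lambda_{\E_f/\F}(\psi_{\F})$ and $\varkappa_{\E_f/\text{L}_f}$ arise when passing from induced epsilon factors to epsilon factors of characters.

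\textbf{Comparison.} Matching the twists by unramified and tame characters determines $\xi(\varpi_{\E_f})$ up to the factor $\lambda_{\E_f/\F}(\psi_{\F})$, giving item (1), and it determines $\xi|_{\mu'_{\text{L}_f}}$ modulo the quadratic correction, giving item (2). The main obstacle is the Mackey double-coset computation for $\tau$ simple supercuspidal. There one must identify the tame parts of the characters on each compositum and track all Langlands constants, especially the sign $\varkappa_{\E_f/\text{L}_f}|_{\mu'}$, which comes from a quadratic Gauss sum over $k_{\text{L}_f}$. I would first do the $\GL(1)$-twist comparison, which already pins down (1) and (2) up to finitely many ambiguities. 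I would then use only one convenient simple supercuspidal $\tau$ to fix the sign, relying on the converse theorem for uniqueness.
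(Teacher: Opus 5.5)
Your ingredients are the paper's: JPSS integrals with Whittaker functions supported on $\N\cdot\textbf{J}$, Mackey over $\W_{\E_v}\backslash\W_{\F}/\W_{\E_f}$, and Adrian--Liu's simple parameters. The comparison step, however, has a genuine gap.

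\textbf{Tame twists do not see $\chi$.} The automorphic side is $\gamma(s,\pi_{(f,\chi,\zeta)}\times\nu,\psi_{\F})=\zeta^{-1}\nu(-c^{-1}\varpi_{\F}^{2})q_{\F}^{1-2s}$. Since $\xi$ is represented by $\beta_f$ with $\e(\E_f\,|\,\F)\,d(\xi)=1$, the Galois side is $\lambda_{\E_f/\F}(\psi_{\F})\,\xi(\beta_f^{-1})\,\nu(-c^{-1}\varpi_{\F}^{2})\,q_{\F}^{1-2s}$, with no Gauss sum. So these twists give exactly item (1). Adding the central character $\xi|_{\F^{\times}}=\omega_{(f,\chi,\zeta)}\varkappa_{\E_f/\F}$ gives $\xi$ on $\mu_{\F}'$ and at $\sigma_f=\beta_f^{n}\varpi_{\F}$, but nothing else. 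The restriction $\xi|_{\mu_{\L_f}'}$ stays undetermined up to a character of the cyclic group $k_{\L_f}^{\times}/k_{\F}^{\times}$ (of order $q_{\F}+1$) that is trivial on the class of $\bar\sigma_f$. That is in general much more than a sign.

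\textbf{Each simple supercuspidal tests only one point.} Item (2) must come from the simple supercuspidal twists. With the Whittaker function translated by $g_v$, the integral is supported on the single double coset $\N(2n,\F)\beta_f^{-n}\J_f$. It gives the monomial $\zeta^{-n}\chi\bigl(-\tfrac{cv+\sigma_f}{vf(v^{-1})}\bigr)\omega_{(v,\varphi,\zeta')}\bigl(\tfrac{\varpi_{\F}^{2}}{f(v^{-1})}\bigr)q_{\F}^{n(1-2s)}$, not a Gauss sum. So you learn $\xi$ only at $1+u\sigma_f$ modulo $\F^{\times}$, where $v=(cu)^{-1}$; varying $\varphi,\zeta'$ adds nothing.

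\textbf{The converse theorem cannot be fed a single $\tau$.} \cite[Theorem 1.1]{Luo} requires equality for the whole family. The paper never uses the converse theorem. It starts from the true parameter $\Ind\xi$ and gets $\E=\E_f$ and $\xi|_{1+\mathcal{P}_{\E_f}}=\psi_{\beta_f}$ from Bushnell--Henniart's endo-class correspondence (Proposition~\ref{1units}). Your depth, multiplicity and twisting-number argument cannot tell which irreducible quadratic $\bar g$ occurs, so it does not give $\bar g=\bar f$. The paper then solves for $\xi$ at every $1+u\sigma_f$, $u\in\mu_{\F}'$. If you solve directly in this way, a single $v$ suffices only if the class of $1+u\sigma_f$ generates $k_{\L_f}^{\times}/k_{\F}^{\times}$.

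\textbf{The sign is the real content of (2), and you have not identified its source.} Matching Proposition~\ref{GaloisPropositionMain} with Corollary~\ref{CorollaryMain} gives (Proposition~\ref{klF})
\[
\xi(1+u\sigma_f)=(-1)^{D(v)-n}\,\varkappa_{\E_f/\F}\bigl(-cf(cu)\bigr)\,\varkappa_{\E_v/\F}\bigl(f(cu)\bigr)\,\chi(1+u\sigma_f).
\]
Here $(-1)^{n-D(v)}$ is the product of the Langlands constants $(-1)^{n/d_g-1}$ of the unramified extensions $\K_g/g(\E_f)$, over the $D(v)=|\W_{\E_v}\backslash\W_{\F}/\W_{\E_f}|$ double cosets. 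The factor $\varkappa_{\E_v/\F}$ enters through $\det\Phi_{(v,\varphi,\zeta')}$, and the norms collapse to $\xi^{-1}(m_{\beta_v}(\beta_f))\,\eta^{-1}(m_{\beta_f}(\beta_v))$.

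Turning this $v$-dependent sign into $\varkappa_{\E_f/\L_f}(1+u\sigma_f)$ is not a Gauss-sum computation. It takes these steps:
\begin{itemize}
\item identify $D(v)$ with the number of irreducible factors of $m_{\beta_f}$ over $\E_v$;
\item rescale by $\beta_v$ and use Hensel's lemma (needing $p\nmid 2n$) to count the factors of $Y^{2n}-\overline{dv}\,Y^{n}-\overline{cv^{2}}$ over $k_{\F}$;
\item get the parity from the Stickelberger--Swan theorem via the discriminant $n^{2n}B^{n-1}(A^{2}-4B)^{n}$;
\item use $-cf(cu)=c^{2}N_{\L_f/\F}(1+u\sigma_f)$ and $\varkappa_{\E/\F}|_{\mathcal{O}_{\F}^{\times}}=\varepsilon_{\F}^{\,n-1}$ for totally ramified $\E/\F$.
\end{itemize}
The value at $\sigma_f$ likewise needs $\lambda_{\E_f/\F}(\psi_{\F})^{n}\varkappa_{\E_f/\F}(\varpi_{\F})=\varkappa_{\E_f/\L_f}(\sigma_f)$. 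Without these ingredients your plan does not reach the formula in (2).
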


\begin{remark}\normalfont
    The main point of our paper is less the result in Theorem~\ref{TheoremMain} but rather the method we have used to prove it -- in particular, the calculation of twisted gamma factors on the automorphic side \emph{without any tameness hypotheses}, which we hope and expect to be able to apply in more general situations.
    
    Indeed, the explicit description of the character in Theorem~\ref{TheoremMain} can also be recovered by tracing through the work of Bushnell--Henniart and Adrian--Liu.~On the one hand, the middle supercuspidal representation~$\pi_{(f, \, \chi, \, \zeta)}$ is obtained by automorphic induction from a simple supercuspidal representation of~$\GL(n, \L_f)$.~On the other hand, in the tame case, all these representations are given by admissible pairs (following a recipe described explicitly in~\cite{BH-4}), while Bushnell--Henniart also explain how tame automorphic induction affects admissible pairs.~Putting this together with the results of~\cite{Adrian-Liu}, we obtain the same result (after some easy simplifications).
    
    What our results should allow is to make a comparison with the Galois parameter even if~$p\mid 2n$, as in the case of simple supercuspidals in~\cite{BH-1}.
\end{remark}

\begin{remark}\normalfont
    Our initial computation of the Langlands parameter gave a rather ugly and complicated formula which, though explicit, was somewhat unsatisfactory.~Prompted by the fact that we could use the method described in the previous remark to arrive at a simpler formula, we asked ChatGPT if it could help in  simplifying our formula -- and the answer was yes.~We have written our own account of this simplification in the appendix (including writing more general statements where appropriate, and attributions to others' work), choosing to separate it from the main body of the paper to make clear where AI has assisted us.
\end{remark}

The paper is organized as follows.~In Section \ref{Notation}, we fix notation used throughout this article.~In Section \ref{AutomorphicSide}, we recall the definition of twisted gamma factors via Rankin--Selberg integrals, review the type-theoretic construction of simple and middle supercuspidal representations, give explicit Whittaker functions in the Whittaker models of these representations, and compute the twisted gamma factors needed to prove Theorem~\ref{TheoremMain}.~In Section~\ref{GaloisSide}, we turn to the Galois side.~After recalling admissible pairs, local constants, and the behavior of gamma factors under induction, we compute the gamma factors attached to the Langlands parameters corresponding to middle and simple supercuspidal representations.~Finally, in Subsection~\ref{LanglandsParameter}, we compare the automorphic and Galois gamma factors and deduce the formula for the Langlands parameter of an essentially tame middle supercuspidal representation. The account of the simplifications of the formula which AI helped us to find is in the appendix.

\section*{Acknowledgments}

The authors thank Dihua Jiang for helpful discussions.~The second-named author was supported by EPSRC grant \mbox{EP/V061739/1}.~As described above, the final simplification was assisted by ChatGPT.

\section{Notation}\label{Notation}

In this section, we introduce notation that we will use throughout this article.~Let $\F$ be a non-archimedean local field with $\mathcal{O}_{\F}$ its valuation ring, $k_{\F}$ its residue field with cardinality $q_{\F}$ and characteristic $p$, and $\mu_{\F}'$ the group of roots of unity of order prime to $p$.~Next, let $\mathcal{P}_{\F}$ denote the maximal ideal of $\mathcal{O}_{\F}$ and $\varpi_{\F}$ a fixed uniformizer.~Furthermore, let $\psi_{\F}$ be a non-trivial additive quasi-character of $\F$ with conductor $\mathcal{P}_{\F}$, i.e.~$\psi_{\F}$ is non-trivial on $\mathcal{O}_{\F}$ but trivial on $\mathcal{P}_{\F}$.~The parametrization of simple and middle supercuspidal representations in Section \ref{AutomorphicSide} will depend on our fixed choices of $\psi_{\F}$ and $\varpi_{\F}$.

Let $\E/\F$ be a finite extension.~We let $\e\left(\E \, | \, \F\right)$ and $\f\left(\E \, | \, \F \right)$ denote the ramification index and residue degree of $\E/\F$ respectively.~For an element $x \in \mathcal{O}_{\F}^{\times}$, we denote its reduction in $k_{\F}^{\times}$ by $\overline{x}$.

Let $\N(n, \F)$ denote the unipotent radical of the standard (upper triangular) Borel subgroup of $\GL(n, \F)$, let $\A(n, \F)$ denote the subgroup of diagonal matrices, and let $\text{P}(n, \F)$ denote the standard mirabolic subgroup.~Let $\Mat(n\times m,\F)$ denote the space of $n\times m$ matrices with entries in $\F$, and let $I_{n}$ denote the $n\times n$ identity matrix.~Furthermore, let $\psi_n$ denote the standard smooth non-degenerate quasi-character of $\N(n,\F)$ defined by
\[
\psi_n(u)=\psi_{\F}\left(\sum_{i=1}^{n-1}u_{i, \, i+1}\right)
\]
for $u=(u_{i,j})\in \N(n,\F)$.

We will fix throughout the article a self-dual Haar measure on $\F$, relative to $\psi_{\F}$.~In particular, this means $\int_{\mathcal{O}_{\F}}  dx = q_{\F}^{1/2}$.~Lastly, let $\Ind$ denote smooth induction, with compact induction being denoted by $\ind$. 

\section{Automorphic Side}\label{AutomorphicSide}

In this section, we first recall the definition of twisted gamma factors via Rankin--Selberg integrals, together with the constructions of simple and middle supercuspidal representations via type theory.~We then give explicit Whittaker functions in the Whittaker models of these representations, which are used in the subsequent computations.~Next, we recall the formula for the gamma factor of a middle supercuspidal representation of $\GL(2n, \F)$ twisted by a tamely ramified quasi-character of $\F^\times$.~Finally, we compute the twisted gamma factor for a pair consisting of a middle supercuspidal representation of $\GL(2n, \F)$ and a simple supercuspidal representation of $\GL(n, \F)$.~Throughout this section, we impose no restriction on $p$.

\subsection{Twisted gamma factors}\label{TwistedGammaFactors}

In this subsection, we define \textit{twisted gamma factors} via the Rankin--Selberg integrals.~First, we recall the following (\cite[Section 2]{Adrian-Liu} and \cite{BH-3}):~an irreducible admissible representation $\left(\pi, V_{\pi}\right)$ of $\GL(n, \F)$ is called \textit{generic} if 
\[
\Hom_{\GL(n, \, \F)}\left(\pi,  \Ind_{\N(n, \, \F)}^{\GL(n, \, \F)}\psi_{n}\right) \neq 0.
\]
By the uniqueness of local Whittaker models, this $\Hom$-space is at most 1-dimensional.~From Frobenius reciprocity,
\[
\Hom_{\GL(n, \, \F)}\left(\pi,  \Ind_{\N(n, \, \F)}^{\GL(n, \, \F)}\psi_{n}\right) \cong \Hom_{\N(n, \, \F)}\left(\pi\big|_{\N(n, \, \F)},  \psi_{n}\right).
\]
Therefore, $\Hom_{\N(n, \, \F)}\left(\pi \big|_{\N(n, \, \F)},  \psi_{n}\right)$ is also at most 1-dimensional.

Assume that $\left(\pi, V_{\pi}\right)$ is generic. We fix a non-zero functional 
\[
l \in \Hom_{\N(n, \, \F)}
\left(\left.\pi\right|_{\N(n, \, \F)}, \, \psi_{n}\right)
\]
which is unique up to scalar.~The \textit{Whittaker function} attached to a vector $v \in V_{\pi}$ is defined by
\[
W_{v}(g) := l \left(\pi(g)v\right), \, \text{ for all } g \in \GL(n, \F),
\]
so that $W_{v} \in \Ind_{\N(n, \, \F)}^{\GL(n, \, \F)}\psi_{n}$.~The space
\[
W(\pi, \psi_{n}) := \{ W_{v} : v \in V_{\pi}\}
\]
is called the \textit{Whittaker model} of $\pi$ and $\GL(n, \F)$ acts on it by right translation.~It is easy to see that the Whittaker model of $\pi$ is independent of the choice of the non-zero functional $l$.

Suppose that $\pi_{1}$ is a generic representation of $\GL(n, \F)$ and that $\pi_{2}$ is a generic representation of $\GL(m, \F)$, where $m<n$; let $\omega_{\pi_{1}}$ and $\omega_{\pi_{2}}$ denote their central characters, respectively.~Furthermore, let 
\[
w_{n, \, m} = \begin{pmatrix}
    I_{m} & \\
    & w_{n-m}
\end{pmatrix} \in \GL(n, \F), \text{ where \; } w_{r} = \begin{pmatrix}
    & & 1 \\
    & \iddots & \\
    1 & &
\end{pmatrix} \in \GL(r, \F).
\]

Next, let $W_{\pi_{1}} \in W\left(\pi_{1}, \psi_{n}\right)$ and $W_{\pi_{2}} \in W\left(\pi_{2}, \psi_{m}^{-1}\right)$.~We define the \textit{Rankin--Selberg integrals} $\widetilde{\Psi}$ and $\Psi$ attached to $\pi_{1}$ and $\pi_{2}$ by 
\begin{align*}
    & \widetilde{\Psi}\left(s; \, W_{\pi_{1}},  \, W_{\pi_{2}}\right) \\
    &= \int\displaylimits_{\smallbackslant{\N(m, \, \F)}{\GL(m, \, \F)}}\int\displaylimits_{\Mat(n-m-1 \times m, \, \F)}W_{\pi_{1}}\left(\begin{pmatrix}
        h & & \\
        x & I_{n-m-1} & \\
        & & 1
    \end{pmatrix}\right)W_{\pi_{2}}(h)|\det(h)|^{s-\frac{n-m}{2}} \, dx \, dh,
\end{align*}
and 
\[
\Psi\left(s; \, W_{\pi_{1}}, \, W_{\pi_{2}}\right) = \int\displaylimits_{\smallbackslant{\N(m, \, \F)}{\GL(m, \, \F)}} W_{\pi_{1}}\left(\begin{pmatrix}
    h & \\
    & I_{n-m}
\end{pmatrix}\right)W_{\pi_{2}}(h)|\det(h)|^{s-\frac{n-m}{2}} \, dh.
\]
These integrals are absolutely convergent for $\text{Re}(s)$ sufficiently large and are rational functions of $q_{\F}^{-s}$ \cite[Section 2.7]{JPSS}.

From these integrals, we obtain the following functional equation which defines the twisted gamma factor $\gamma(s, \, \pi_{1} \times \pi_{2}, \, \psi_{\F})$.
\begin{theorem}\cite[Section 2.7]{JPSS}\label{GammaFactorEquation}
    There is a rational function $\gamma(s, \, \pi_{1} \times \pi_{2}, \, \psi_{\F}) \in \mathbb{C}\left(q_{\F}^{-s}\right)$ such that
    \[
    \widetilde{\Psi}\left(1-s; \, \rho(w_{n, \, m})\widetilde{W}_{\pi_{1}}, \, \widetilde{W}_{\pi_{2}}\right) = \omega_{\pi_{2}}(-1)^{n-1} \, \gamma(s, \, \pi_{1} \times \pi_{2}, \, \psi_{\F})\Psi\left(s; \, W_{\pi_{1}},  \, W_{\pi_{2}}\right)
    \]
for all $W_{\pi_{1}} \in W\left(\pi_{1}, \psi_{n}\right)$, $W_{\pi_{2}} \in W\left(\pi_{2}, \psi_{m}^{-1}\right)$, where $\rho$ denotes the right translation action, and $\widetilde{W}_{\pi_{1}}(g) = W_{\pi_{1}}\left(w_{n} \, \leftidx^{t}g^{-1}\right)$, $\widetilde{W}_{\pi_{2}}(h) = W_{\pi_{2}}\left(w_{m} \, \leftidx^{t}h^{-1}\right)$, for $g \in \GL(n, \F)$ and $h \in \GL(m, \F)$.
\end{theorem}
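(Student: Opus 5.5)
This is the JPSS functional equation, quoted from \cite[Section 2.7]{JPSS}, so the plan is to recall the structure of their argument rather than build a new one. First, both $\Psi(s; W_{\pi_1}, W_{\pi_2})$ and $\widetilde{\Psi}(1-s; \rho(w_{n,m})\widetilde{W}_{\pi_1}, \widetilde{W}_{\pi_2})$ converge absolutely for $\operatorname{Re}(s)$ large, respectively small. For this I would use the asymptotic expansion of Whittaker functions on the torus $\A(m,\F)$ (finite functions times Schwartz functions) and the Iwasawa decomposition $\GL(m,\F)=\N(m,\F)\A(m,\F)K$. The same expansion shows that each integral is a rational function of $q_{\F}^{-s}$. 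The family of such integrals spans a fractional ideal of $\mathbb{C}[q_{\F}^{\pm s}]$, which gives meromorphic continuation to all $s$. To see that this ideal contains $1$, one chooses $W_{\pi_1}$ supported near the mirabolic and uses the Kirillov model.

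Next comes uniqueness. Consider the bilinear forms $B_s(W_1,W_2)=\Psi(s;W_1,W_2)$ and $\widetilde{B}_s(W_1,W_2)=\widetilde{\Psi}(1-s;\rho(w_{n,m})\widetilde W_1,\widetilde W_2)$. Both are quasi-invariant under the embedded subgroup
\[
\left\{\begin{pmatrix} h & y & \\ & u & \\ & & \end{pmatrix}\right\}\cong \GL(m,\F)\ltimes \U,
\]
with character $|\det h|^{-s+\frac{n-m}{2}}\psi$ on the relevant unipotent part. For $\widetilde B_s$ this follows by substituting $g\mapsto w_n{}^tg^{-1}$ and absorbing the integral over $x\in\Mat(n-m-1\times m,\F)$. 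The key step is then to show that for all but finitely many values of $q_{\F}^{-s}$, the space of such invariant bilinear forms on $W(\pi_1,\psi_n)\times W(\pi_2,\psi_m^{-1})$ is at most one-dimensional. I would do this by restricting $\pi_1$ to the mirabolic $\text{P}(n,\F)$ and using the Bernstein--Zelevinsky filtration by derivatives. The bottom piece $\ind_{\N}^{\text{P}}\psi_n$ contributes exactly one invariant form, via Frobenius reciprocity and uniqueness of Whittaker models for $\pi_2$. The higher derivatives contribute only for finitely many $q_{\F}^{-s}$, because their central exponents are finite in number.

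Given uniqueness, the two forms are proportional, and the proportionality constant is a rational function of $q_{\F}^{-s}$. Normalizing it as $\omega_{\pi_2}(-1)^{n-1}\gamma(s,\pi_1\times\pi_2,\psi_{\F})$ gives the stated identity on a Zariski-open set, and hence identically by rationality. The main obstacle is the uniqueness step: one must control the higher derivatives uniformly and verify that $\widetilde B_s$ has exactly the same equivariance as $B_s$. That verification is where the twist by $w_{n,m}$ and the sign $\omega_{\pi_2}(-1)^{n-1}$ enter.
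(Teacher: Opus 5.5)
Your outline is the standard argument of the source the paper simply quotes, \cite[Section 2.7]{JPSS}, so it is essentially the same approach: rationality and non-vanishing of the integrals, quasi-invariance of both bilinear forms under a common group, uniqueness for all but finitely many $q_{\F}^{-s}$ via the Bernstein--Zelevinsky filtration of $\pi_1|_{\text{P}(n,\F)}$, and rationality of the ratio. Two small points need fixing. First, the group must be $\GL(m,\F)\ltimes Y_{n,m}$, where $Y_{n,m}$ is the unipotent radical of the standard parabolic of type $(m+1,1,\ldots,1)$; equivalently, in $\left(\begin{smallmatrix} h & y \\ & u\end{smallmatrix}\right)$ the first column of $y$ must vanish, since otherwise the integrand of $\Psi$ picks up the factor $\psi_{\F}((hy)_{m,1})$, which depends on the integration variable $h$. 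Second, the sign $\omega_{\pi_2}(-1)^{n-1}$ is only a normalization of $\gamma$; the equivariance check does not produce it.
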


\subsection{Simple supercuspidal representations}\label{SimpleSupercuspidalRepresentations}

In this subsection, we briefly recall the structure of \textit{simple supercuspidal representations} of $\GL(n, \F)$, $n \geq 2$, via the theory of maximal simple types (\cite{Knightly} and \cite[Example 2.18]{Ye}).~We then give an explicit Whittaker function associated with a simple supercuspidal representation.~Moreover, we carry out the construction here with respect to $\psi_{\F}^{-1}$ rather than $\psi_{\F}$, since our computations involving twisted gamma factors require us to consider Whittaker models of simple supercuspidals with respect to $\psi_{n}^{-1}$.

Let $\mathfrak{I}_{n}$ be the standard minimal hereditary $\mathcal{O}_{\F}$-order in $\mathrm{Mat}(n \times n, \F)$; that is, the space of $n \times n$ matrices with entries in $\mathcal{O}_{\F}$ whose image modulo $\mathcal{P}_{\F}$ is an upper triangular matrix with entries in $k_{\F}$.~Furthermore, let $\mathfrak{I}_{n}^{+}$ denote the Jacobson radical of $\mathfrak{I}_{n}$; that is, the space of $n \times n$ matrices with entries in $\mathcal{O}_{\F}$ whose image modulo $\mathcal{P}_{\F}$ is a strictly upper triangular matrix with entries in $k_{\F}$.

Next, let $v \in \mathcal{O}_{\F}^{\times}$ and 
\[
\beta_{v} = \begin{pmatrix}
    & & & & \frac{1}{v\varpi_{\F}} \\
    1 & & & & \\
     & 1 & & & \\
      & & \ddots & & \\
      & & & 1 &
\end{pmatrix}.
\]
Then $\E_{v} = \F \left[\beta_{v} \right]$ is a degree $n$ totally ramified extension of $\F$.~Furthermore, we have that $\text{v}_{\E_{v}}\left(\beta_{v}\right) = -1$ where $\text{v}_{\E_{v}}$ is the normalized discrete valuation on $\E_{v}$.~Let
\begin{equation}\label{MinimalPolynomialSimple}
m_{\beta_{v}}(X) = X^{n} - \frac{1}{v\varpi_{\F}}
\end{equation}
denote the minimal polynomial of $\beta_{v}$.

Associated with the above are the open compact subgroups of $\mathfrak{I}_{n}^{\times}$:
\[
\U^{1}\left(\mathfrak{I}_{n}\right) := I_{n} + \mathfrak{I}_{n}^{+}
\]
and 
\[
\J_{v} := \J\left(\beta_{v}, \mathfrak{I}_{n}\right) = \mathcal{O}_{\F}^{\times}  \U^{1}\left( \mathfrak{I}_{n}\right).
\]
Let $\psi_{\beta_{v}}$ be the quasi-character of $\U^{1}\left(\mathfrak{I}_{n}\right)$ defined by
\[
\psi_{\beta_{v}}(x) = \left(\psi_{\F}^{-1} \circ \tr_{\F}\right)\left(\beta_{v}(x-1)\right)
\]
where $\tr_{\F}$ denotes matrix trace.~Furthermore, let $\varphi$ be a quasi-character of $k_{\F}^{\times}$ inflated to $\mathcal{O}_{\F}^{\times}$.~From these quasi-characters, we may form a $\beta_{v}$-extension 
\[
\kappa_{(v,  \, \varphi)} \, \colon \, \J_{v} \longrightarrow \mathbb{C}^{\times}
\]
of $\psi_{\beta_{v}}$ by letting
\[
\kappa_{(v,  \, \varphi)}(xy) := \varphi(x) \, \psi_{\beta_{v}}(y)
\]
for $x \in \mathcal{O}_{\F}^{\times}$ and $y \in \U^{1}\left(\mathfrak{I}_{n}\right)$.

To form a maximal simple type with the data above, we take the pair $\left(\J_{v},  \kappa_{(v,  \, \varphi)}\right)$.~Moreover, to form an \textit{extended maximal simple type}, let $\textbf{J}_{v} = \E_{v}^{\times}\J_{v}$.~We extend $\kappa_{(v,  \, \varphi)}$ to a quasi-character $\Lambda_{(v,  \, \varphi,  \, \zeta)}$ on $\textbf{J}_{v}$ by setting $\Lambda_{(v,  \, \varphi,  \, \zeta)}\left(\beta_{v}\right) = \zeta \in \mathbb{C}^{\times}$ so that 
\[
\Lambda_{(v,  \, \varphi,  \, \zeta)}(\varpi_{\F}) = \left(\zeta^{n}  \, \varphi(v)\right)^{-1}.
\]
We call the pair $\left(\textbf{J}_{v}, \Lambda_{(v,  \, \varphi,  \, \zeta)}\right)$ an extended maximal simple type.~Hence, we may now define a simple supercuspidal representation.
\begin{definition}
    \normalfont 
    A \textit{simple supercuspidal representation} of $\GL(n, \F)$ is a supercuspidal representation of the form 
    \[
    \pi_{(v,  \, \varphi,  \, \zeta)} = \ind_{\textbf{J}_{v}}^{\GL(n, \, \F)}\Lambda_{(v,  \, \varphi, \, \zeta)}
    \]
    where $\left(\textbf{J}_{v}, \Lambda_{(v,  \, \varphi,  \, \zeta)}\right)$ is an extended maximal simple type as above.
\end{definition}
Simple supercuspidal representations are those supercuspidals of $\GL(n, \F)$ with minimal positive depth $\frac{1}{n}$.~Let $\omega_{(v,  \, \varphi,  \, \zeta)}$ denote the central character of $\pi_{(v,  \, \varphi,  \, \zeta)}$.

We parametrize simple supercuspidal representations here via $(\bar{v},  \, \varphi,  \, \zeta)$ because there is a bijection between the set of isomorphism classes $\mathcal{A}_{\text{simple}}^{(n)}$ of simple supercuspidal representations of $\GL(n, \F)$ and the set of reductions of such triples \cite[Proposition 1.3]{Imai}.
\begin{proposition}\label{BijectionSimple}
    There exists a bijection between $\mathcal{A}_{\text{simple}}^{(n)}$ and the set of triples $(\bar{v},  \, \varphi,  \, \zeta)$ with $\bar{v} \in k_{\F}^{\times}$, $\varphi$ a quasi-character of $k_{\F}^{\times}$, and $\zeta \in \mathbb{C}^{\times}$.
\end{proposition}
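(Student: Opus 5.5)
The argument has two halves, in the standard way: first, every simple supercuspidal is isomorphic to some $\pi_{(v,\varphi,\zeta)}$, and only the reduction $\bar v$ matters; second, the triple $(\bar v,\varphi,\zeta)$ can be recovered from the isomorphism class. The map in question is $(\bar v,\varphi,\zeta)\mapsto[\pi_{(v,\varphi,\zeta)}]$ for any lift $v$ of $\bar v$.

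\emph{Well-definedness.} Let $v'=v(1+y)$ with $y\in\mathcal P_{\F}$, and compare the extended types $(\textbf{J}_v,\Lambda_{(v,\varphi,\zeta)})$ and $(\textbf{J}_{v'},\Lambda_{(v',\varphi,\zeta)})$.
\begin{itemize}
\item The difference $\beta_{v'}-\beta_v$ has a single nonzero entry, in the top-right corner, equal to $\varpi_{\F}^{-1}(1/v'-1/v)\in\mathcal O_{\F}$. This lies in $\mathfrak I_n$, so $\psi_{\beta_v}=\psi_{\beta_{v'}}$ on $\U^1(\mathfrak I_n)$, because $\tr(\mathfrak I_n\cdot\mathfrak I_n^+)\subseteq\mathcal P_{\F}$.
\item Hence $\kappa_{(v,\varphi)}=\kappa_{(v',\varphi)}$.
\item The two extended groups agree, $\textbf{J}_v=\textbf{J}_{v'}$, since $\beta_{v'}\in\beta_v\J_v$.
\item The two extensions also agree. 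Write $\beta_{v'}=\beta_v u$ with $u\in\U^1(\mathfrak I_n)$, and check that $\Lambda_{(v,\varphi,\zeta)}(\beta_v u)=\zeta\,\psi_{\beta_v}(u)=\zeta$. For the last equality, $\psi_{\beta_v}(u)=\psi_{\F}^{-1}(\tr(\beta_v(u-1)))$, and $\tr(\beta_v(u-1))=\tr(\beta_v\beta_v^{-1}(\beta_{v'}-\beta_v))$ is a trace of an element of $\mathcal P_{\F}$-type, hence trivial under $\psi_{\F}$.
\end{itemize}
I expect a possible compatibility adjustment from $\varphi(v)$ versus $\varphi(v')$ in $\Lambda(\varpi_{\F})$, but these agree because $\varphi$ factors through $k_{\F}^\times$.

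\emph{Surjectivity.} A simple supercuspidal has depth $1/n$, so its underlying simple stratum is minimal with $\E/\F$ totally ramified of degree $n$ over the minimal order $\mathfrak I_n$. After conjugating by the normalizer of $\mathfrak I_n$, this stratum can be put in the form $[\mathfrak I_n,1,0,\beta_v]$. An affine-generic character of $\U^1(\mathfrak I_n)/\U^2(\mathfrak I_n)$ is given by entries in $(k_{\F}^\times)^n$; the action of the diagonal torus normalizes all but the product of these entries to $1$. The remaining invariant is that product, which is $\bar v$ up to a sign convention. Once the stratum is fixed, the extension to $\textbf{J}_v=\langle\beta_v\rangle\ltimes\mathcal O_{\F}^\times\U^1$ is determined by a character $\varphi$ of $k_{\F}^\times$ on the centre's units and by the value $\zeta$ at $\beta_v$.

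\emph{Injectivity.} This uses the intertwining theorem: two extended maximal simple types induce isomorphic representations if and only if they are $\GL(n,\F)$-conjugate. One first recovers $\bar v$, and then $\varphi$ and $\zeta$.
\begin{itemize}
\item Compare the restrictions to $\U^1(\mathfrak I_n)$. Intertwining of $\psi_{\beta_v}$ and $\psi_{\beta_{v'}}$ forces conjugacy by an element normalizing $\mathfrak I_n$, since the normalizer of $\mathfrak I_n$ is generated by $\mathfrak I_n^\times$ and $\beta$-type elements. Such conjugation preserves the product of the affine coordinates, so $\bar v=\bar v'$.
\item With $\bar v$ fixed, the intertwining of $\Lambda$ is exactly $\textbf{J}_v$, so the two characters must coincide. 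Comparing on $\mathcal O_{\F}^\times$ gives $\varphi=\varphi'$, and comparing at $\beta_v$ gives $\zeta=\zeta'$.
\end{itemize}
The main obstacle is controlling the normalizer step: checking that the product invariant is preserved. The alternative is to simply cite \cite[Proposition 1.3]{Imai}, which the paper in fact does.
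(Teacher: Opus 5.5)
Your proof is correct, but it takes a different route from the paper. The paper gives no argument of its own and simply cites \cite[Proposition 1.3]{Imai}. You instead derive the statement from Bushnell--Kutzko theory. Lift-independence comes from comparing the extended types directly. Injectivity comes from the conjugacy theorem for extended maximal simple types together with $I_{\GL(n,\F)}(\psi_{\beta_v})=\E_v^\times\U^1(\mathfrak{I}_n)=\textbf{J}_v$. The citation is shorter. Your route explicitly checks something the paper uses silently: that $\pi_{(v,\varphi,\zeta)}$ depends only on $\bar v$, since the paper later feeds in arbitrary units such as $v(u)=(cu)^{-1}$. Your computation gives exactly this: $\beta_v^{-1}\beta_{v'}=\mathrm{diag}(1,\dots,1,v/v')\in\U^1(\mathfrak{I}_n)$ and $\tr(\beta_{v'}-\beta_v)=0$.

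A few places can be tightened.

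\emph{Surjectivity.} This needs no argument, because the paper defines a simple supercuspidal to be one of the form $\pi_{(v,\varphi,\zeta)}$. Your stratum-normalisation paragraph proves the stronger statement that every depth-$\frac1n$ supercuspidal has this form.

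\emph{Why the conjugating element normalises $\mathfrak{I}_n$.} The reason is not the structure of the normaliser. It is that $g$ carries $\textbf{J}_v$ onto $\textbf{J}_{v'}$, hence carries the common maximal compact subgroup $\J_v=\J_{v'}$ and its pro-$p$ radical $I_n+\mathfrak{I}_n^+$ to themselves. So $g$ normalises $\U^1(\mathfrak{I}_n)$, and hence $\mathfrak{I}_n$.

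\emph{Invariance of $\bar v$.} This is cleanest via determinants. We have ${}^g\psi_{\beta_v}=\psi_{g\beta_vg^{-1}}$. The character $\psi_\beta$ on $\U^1(\mathfrak{I}_n)$ determines $\beta$ modulo $\mathfrak{I}_n$, because $\{y : \tr(y\,\mathfrak{I}_n^+)\subseteq\mathcal{P}_{\F}\}=\mathfrak{I}_n$. Since $\beta_{v'}+\mathfrak{I}_n=\beta_{v'}(I_n+\mathfrak{I}_n^+)$, the determinant is constant modulo $1+\mathcal{P}_{\F}$ on this coset. Hence $(v\varpi_{\F})^{-1}\equiv(v'\varpi_{\F})^{-1}$ modulo $1+\mathcal{P}_{\F}$, i.e.\ $\bar v=\bar v'$.

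\emph{Last bullet.} The precise statement you need is this. Any element intertwining $\Lambda_{(v,\varphi,\zeta)}$ with $\Lambda_{(v,\varphi',\zeta')}$ intertwines $\psi_{\beta_v}$ with itself, so it lies in $\textbf{J}_v$. Both characters are conjugation-invariant on $\textbf{J}_v$, so they are equal. This gives $\varphi=\varphi'$ and $\zeta=\zeta'$.
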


Lastly, we define an explicit Whittaker function $\mathcal{W}_{(v,  \, \varphi,  \, \zeta)} \in W\left(\pi_{(v,  \, \varphi,  \, \zeta)}, \psi_{\F}^{-1}\right)$ via the following \cite[Section 3.3]{Adrian-Liu}, \cite[Example 2.23]{Ye}:

\hspace{0pt}\resizebox{1.0\linewidth}{!}{
  \begin{minipage}{\linewidth}
\begin{align*}
    g \mapsto  \begin{cases} 
      \hfill \psi_{n}^{-1}(u)  \Lambda_{\left(v, \, \varphi, \, \zeta\right)}(h) &, \text{ if } g = uh \in \N(n, \F) \, \textbf{J}_{v} \text{ with } u \in \N(n, \F), \, h \in \textbf{J}_{v} \\
      \hfill 0 & ,  \text{ otherwise}
   \end{cases}.
\end{align*}
\end{minipage}
}

\subsection{Middle supercuspidal representations}\label{MiddleSupercuspidalRepresentations}

In this subsection, we briefly recall the construction of \textit{middle supercuspidal representations} of $\GL(2n, \F)$, $n \geq 2$, via the theory of maximal simple types \cite[Subsection 3.1]{Luo}.~Next, we give an explicit Whittaker function associated with a middle supercuspidal representation. 

Let $\mathfrak{A}_{2n}$ be the hereditary $\mathcal{O}_{\F}$-order in $\Mat(2n \times 2n, \F)$:
\begin{equation}\label{MiddleOrder}
    \mathfrak{A}_{2n} = \begin{pmatrix}
        \mathfrak{I}_{n} & \varpi_{\F}^{-1}\mathfrak{I}_{n}  \\
        \varpi_{\F}\mathfrak{I}_{n} & \mathfrak{I}_{n}  
    \end{pmatrix}
\end{equation}
with Jacobson radical
\begin{equation}\label{MiddleJacobson}
    \mathfrak{P}_{2n} = \begin{pmatrix}
        \mathfrak{I}_{n}^{+} & \varpi_{\F}^{-1}\mathfrak{I}_{n}^{+}  \\
        \varpi_{\F}\mathfrak{I}_{n}^{+} & \mathfrak{I}_{n}^{+}  
    \end{pmatrix}.
\end{equation}
Let 
\[
f(X) = X^{2}-dX-c \in \mathcal{O}_{\F}[X]
\]
such that $f \pmod{\mathcal{P}_{\F}}$ is irreducible over $k_{\F}$, and
\begin{equation*}
    \beta_{f} := \begin{pmatrix}
    & & & & & \frac{c}{\varpi_{\F}^{2}} \\
    1 & & & & &\\
    & \ddots & & & &  \\
    & & 1 & & & \frac{d}{\varpi_{\F}}\\
    &  & & \ddots  & & \\
    & & & & 1 &
\end{pmatrix}
\end{equation*}
where $\frac{d}{\varpi_{\F}} = \left(\beta_{f}\right)_{n+1, \,   2n}$.~Then $\E_{f} = F\left[\beta_{f}\right]$ is a degree $2n$ extension of $\F$ with $\e\left(\E_{f} \, | \, \F\right) = n$ and $\f\left(\E_{f} \, | \, \F\right) = 2$.

Furthermore, we have that $\text{v}_{\E_{f}}\left(\beta_{f}\right) = -1$ where $\text{v}_{\E_{f}}$ is the normalized discrete valuation on $\E_{f}$.~Let $\sigma_{f} := \beta_{f}^{n}\varpi_{\F}$ be a zero of $f$ and $\text{L}_{f} := F\left[\sigma_{f}\right]$ the unique degree two unramified extension of $\F$.~Moreover, let
\begin{equation}\label{MinimalPolynomialMiddle}
m_{\beta_{f}}(X) = X^{2n} - \frac{d}{\varpi_{\F}}X^{n} - \frac{c}{\varpi_{\F}^{2}}
\end{equation}
denote the minimal polynomial of $\beta_{f}$.

Associated with the above are the open compact subgroups of $\mathfrak{A}_{2n}^{\times}$:
\[
\U^{1}\left(\mathfrak{A}_{2n}\right) := I_{2n} + \mathfrak{P}_{2n}
\]
and 
\[
\J_{f} = \mathcal{O}_{\text{L}_{f}}^{\times} \U^{1}\left(\mathfrak{A}_{2n}\right).
\] 
If we fix the ordered $\mathcal{O}_{\F}$-basis $\{c, \, \sigma_{f}\}$ of $\mathcal{O}_{\text{L}_{f}}$, a general element $a_{0}c+a_{1}\sigma_{f} \in \mathcal{O}_{\text{L}_{f}}^{\times}$ is of the form
\begin{equation}\label{decomposition}
\begin{pmatrix}
        a_{0}c & & & & \frac{a_{1}c}{\varpi_{\F}} & & &\\
         & a_{0}c & & & & \frac{a_{1}c}{\varpi_{\F}} & & \\
         & & \ddots & & & & \ddots & \\
         &  & & a_{0}c & & & & \frac{a_{1}c}{\varpi_{\F}} \\
        a_{1}\varpi_{\F} & & & & a_{0}c+a_{1}d & & & \\
          & a_{1}\varpi_{\F} & & &  & a_{0}c+a_{1}d & & \\
          & & \ddots & &  & & \ddots & \\
         & & &  a_{1}\varpi_{\F} & & & &a_{0}c+a_{1}d 
    \end{pmatrix}
\end{equation}
\noindent where $a_{0}$, $a_{1} \in \mathcal{O}_{\F}$ are not both in $\mathcal{P}_{\F}$. 

Let $\psi_{\beta_{f}}$ be the quasi-character of $\U^{1}\left(\mathfrak{A}_{2n}\right)$ defined by
\[
\psi_{\beta_{f}}(x) = \left(\psi_{\F} \circ \tr_{\F}\right)\left(\beta_{f}(x-1)\right),
\]
and $\chi$ be a quasi-character of $k_{\text{L}_{f}}^{\times}$ inflated to $\mathcal{O}_{\text{L}_{f}}^{\times}$.~Then we may define the $\beta_{f}$-extension 
\[
\kappa_{\left(f, \, \chi \right)} \, \colon \, \J_{f} \longrightarrow \mathbb{C}^{\times}
\]
of $\psi_{\beta_{f}}$ by letting
\[
\kappa_{\left(f, \, \chi \right)}\left(xy\right) = \chi(x) \, \psi_{\beta_{f}}(y)
\]
for $x \in \mathcal{O}_{\text{L}_{f}}^{\times}$ and $y \in \U^{1}\left(\mathfrak{A}_{2n}\right)$.

To form a maximal simple type with the data above, we take the pair $\left(\J_{f},  \kappa_{(f,  \, \chi)}\right)$.~Moreover, to form an \textit{extended maximal simple type}, let $\textbf{J}_{f} = \E_{f}^{\times}\J_{f}$.~We extend $\kappa_{(f,  \, \chi)}$ to a quasi-character $\Lambda_{(f,  \, \chi,  \, \zeta)}$ on $\textbf{J}_{f}$ by setting $\Lambda_{(f,  \, \chi,  \, \zeta)}\left(\beta_{f}\right) = \zeta \in \mathbb{C}^{\times}$ so that 
\[
\Lambda_{(f,  \, \chi,  \, \zeta)}(\varpi_{\F}) = \zeta^{-n}  \, \chi(\sigma_{f}).
\]
The pair $\left(\textbf{J}_{f}, \Lambda_{(f,  \, \chi,  \, \zeta)}\right)$ is an extended maximal simple type.~Hence, we may now define a middle supercuspidal representation.
\begin{definition}
    \normalfont 
    A \textit{middle supercuspidal representation} of $\GL(2n, \F)$ is a supercuspidal representation of the form 
    \[
    \pi_{(f,  \, \chi,  \, \zeta)} = \ind_{\textbf{J}_{f}}^{\GL(2n, \, \F)}\Lambda_{(f,  \, \chi,  \, \zeta)}
    \]
    where $\left(\textbf{J}_{f}, \Lambda_{(f,  \, \chi,  \, \zeta)}\right)$ is an extended maximal simple type as above.
\end{definition}
Middle supercuspidal representations are those minimax supercuspidals of $\GL(2n, \F)$ with positive depth $\frac{1}{n}$.~Let $\omega_{(f,  \, \chi,  \, \zeta)}$ denote the central character of $\pi_{(f,  \, \chi,  \, \zeta)}$.

We parametrize middle supercuspidal representations here via $(f,  \, \chi,  \, \zeta)$ because there is a bijection between the set of isomorphism classes $\mathcal{A}_{\text{middle}}^{(2n)}$ of middle supercuspidal representations of $\GL(2n, \F)$ and the set of reductions of such triples \cite[Proposition 3.2]{Luo}.~For the following proposition, let 
\[
k_{\bar{f}} = \bigslant{k_{\F}[X]}{\left\langle \,\bar{f} \, \right\rangle}.
\]

\begin{proposition}\label{BijectionMiddle}
    There exists a bijection between $\mathcal{A}_{\text{middle}}^{(2n)}$ and the set of triples $(\bar{f},  \, \chi,  \, \zeta)$ with $\bar{f}$ a monic irreducible degree two polynomial over $k_{\F}$, $\chi$ a quasi-character of $k_{\bar{f}}^{\times}$, and $\zeta \in \mathbb{C}^{\times}$.
\end{proposition}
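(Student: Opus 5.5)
This is a parametrization result, so the plan is to deduce it from the general classification of supercuspidals by extended maximal simple types and check that the data $(\bar f,\chi,\zeta)$ record exactly the conjugacy classes of such types. By Bushnell--Kutzko, every supercuspidal $\pi$ of $\GL(2n,\F)$ is $\ind_{\textbf{J}}^{\GL(2n,\F)}\Lambda$ for an extended maximal simple type $(\textbf{J},\Lambda)$. Moreover, two such inductions are isomorphic if and only if the types are $\GL(2n,\F)$-conjugate. Since middle supercuspidals are defined as inductions of the specific types $(\textbf{J}_f,\Lambda_{(f,\chi,\zeta)})$, the map $(f,\chi,\zeta)\mapsto\pi_{(f,\chi,\zeta)}$ is surjective onto $\mathcal{A}_{\text{middle}}^{(2n)}$ by definition. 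It therefore remains to prove two things: that the isomorphism class depends only on $(\bar f,\chi,\zeta)$, and that distinct triples give non-conjugate types.

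\emph{Step 1: well-definedness.} Let $f,f'$ be two lifts of the same $\bar f$. Then $\beta_f-\beta_{f'}$ lies in $\mathfrak{P}_{2n}^{-1}\cdot(\text{something in }\mathfrak{A}_{2n})$. More precisely, the entries $c/\varpi_\F^2$ and $d/\varpi_\F$ change by elements of $\varpi_\F^{-1}\mathcal{O}_\F$ and $\mathcal{O}_\F$ respectively, which places the difference in $\mathfrak{P}_{2n}^{-1+1}=\mathfrak{A}_{2n}$. Hence $\psi_{\beta_f}=\psi_{\beta_{f'}}$ on $\U^1(\mathfrak{A}_{2n})$, because $\psi_\F$ is trivial on $\mathcal{P}_\F$ and the relevant traces land in $\mathcal{P}_\F$. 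The groups $\J_f$ and $\J_{f'}$ coincide, since $\mathcal{O}^\times_{\L_f}$ is determined modulo $\U^1$ by its image in $k_{\bar f}$; this uses the matrix description \eqref{decomposition}, whose reduction depends only on $\bar c,\bar d$. The characters $\chi$ also agree under the identification with $k_{\bar f}^\times$. For the extensions to $\textbf{J}$, I would show that $\beta_{f'}\in\beta_f\J_f$ and then compare the values on $\beta$. This holds up to a factor on which $\Lambda$ is computable, and I would conclude by adjusting by conjugation from $\U^1$ or checking directly that the values match. This last check is the delicate part of the step.

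\emph{Step 2: injectivity.} Suppose the types for $(f,\chi,\zeta)$ and $(f',\chi',\zeta')$ are conjugate by some $g$. Restricting to $\U^1(\mathfrak{A}_{2n})$ and applying the intertwining-implies-conjugacy theorem for simple characters shows that $g$ normalizes $\mathfrak{A}_{2n}$ modulo $\textbf{J}$, so I reduce to $g\in\mathfrak{K}(\mathfrak{A}_{2n})$. On the other hand, the character $\psi_{\beta_f}$ restricted to $\U^1/\U^2$ recovers the coset $\beta_f+\mathfrak{P}_{2n}^{0}$, and hence the characteristic polynomial of $\varpi_\F\beta_f^n$ modulo $\mathcal{P}_\F$, which is $\bar f$. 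This requires computing how conjugation by the normalizer $\mathfrak{K}(\mathfrak{A}_{2n})/\U^1$ acts on these residues. That group is generated by a finite torus, the element $\Pi=\beta$-like generator, and possibly a permutation. It could a priori replace $\bar f$ by a Galois conjugate, but a Galois conjugate of an irreducible quadratic $\bar f$ has the same minimal polynomial, so $\bar f$ is recovered. The element $\chi$ is then recovered as $\Lambda$ restricted to $\mathcal{O}^\times_{\L_f}$ modulo $\U^1$, since $\U^1$ maps to $\psi$. Finally, $\zeta=\Lambda(\beta_f)$.

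The main obstacle is Step 2's normalizer computation. I need to make sure that conjugation by elements of $\mathfrak{K}(\mathfrak{A}_{2n})$ acting through $\mathcal{O}_{\L_f}^\times$ or Frobenius does not identify $\chi$ with its Galois conjugate $\chi^q$ while fixing the other data. If it does, the stated bijection would need $\chi$ up to conjugacy. I would check this carefully by examining whether an element realizing Frobenius on $\L_f$ preserves $\psi_{\beta_f}$. I expect it does not, because it changes $\sigma_f$ to its conjugate and thus moves $\beta_f$ outside $\beta_f+\mathfrak{A}_{2n}$. This would confirm that the triples are honest invariants.
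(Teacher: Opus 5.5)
Your strategy is the standard type-theoretic one. The paper gives no argument of its own here and only cites \cite[Proposition 3.2]{Luo}. What you actually carry out is correct: $\psi_{\beta_f}=\psi_{\beta_{f'}}$, $\J_f=\J_{f'}$, $\beta_f^{-1}\beta_{f'}\in\U^1(\mathfrak{A}_{2n})$, and the recovery of $\bar f$ from $g^{-1}\sigma_f g\in\sigma_{f'}+\mathfrak{P}_{2n}$ with $g\in\mathfrak{K}(\mathfrak{A}_{2n})$. However, the proposal stops at exactly the two checks that carry the content, and the tool you suggest for the second is misdescribed. Both close quickly.

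\emph{Step 1.} Conjugating by $\U^1(\mathfrak{A}_{2n})$ can never help, since it lies in $\textbf{J}_f$, which fixes its own characters. The direct check is immediate:
\[
\Lambda_{(f,\chi,\zeta)}(\beta_{f'})=\zeta\,\psi_{\beta_f}\bigl(\beta_f^{-1}\beta_{f'}\bigr)=\zeta\,\psi_{\F}\bigl(\tr(\beta_{f'}-\beta_f)\bigr)=\zeta .
\]
This holds because $\beta_{f'}-\beta_f$ is supported at the entries $(1,2n)$ and $(n+1,2n)$, which are off the diagonal since $n\geq 2$.

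Your sentence ``the characters $\chi$ also agree'' also needs a line. For $x'\in\mathcal{O}_{\L_{f'}}^{\times}$ and $x\in\mathcal{O}_{\L_f}^{\times}$ with the same image in $\J_f/\U^1(\mathfrak{A}_{2n})$, one has $\kappa_{(f,\chi)}(x')=\chi(x)\,\psi_{\beta_f}(x^{-1}x')$, and the last factor is not visibly $1$. It is $1$ on the relevant elements: $\kappa_{(f,1)}$ takes values in $p$-th roots of unity, so it is trivial on the prime-to-$p$ group $\mu'_{\L_{f'}}$. Since $\J_f=\mu'_{\L_{f'}}\U^1(\mathfrak{A}_{2n})$, this gives $\kappa_{(f,\chi)}=\kappa_{(f',\chi)}$. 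So the two extended types literally coincide.

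\emph{Step 2.} No computation in $\mathfrak{K}(\mathfrak{A}_{2n})$ is needed to recover $\chi$ and $\zeta$. Once $\bar f=\bar f'$, Step 1 puts both characters on the same group $\textbf{J}_f$, and both restrict to $\psi_{\beta_f}$ on $\U^1(\mathfrak{A}_{2n})$. Any $g$ intertwining them therefore intertwines $\psi_{\beta_f}$ with itself. Here $\beta_f$ is minimal, $H^1=J^1=\U^1(\mathfrak{A}_{2n})$ and $B^\times=\E_f^\times$, so the Bushnell--Kutzko intertwining formula for simple characters gives
\[
g\in\U^1(\mathfrak{A}_{2n})\,\E_f^{\times}\,\U^1(\mathfrak{A}_{2n})=\textbf{J}_f .
\]
Since $\textbf{J}_f$ fixes its own character, $\Lambda_{(f,\chi,\zeta)}=\Lambda_{(f,\chi',\zeta')}$, hence $\chi=\chi'$ and $\zeta=\zeta'$. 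This is the precise reason no ``Frobenius'' element can occur.

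Your description of $\mathfrak{K}(\mathfrak{A}_{2n})/\U^1(\mathfrak{A}_{2n})$ is also wrong: $\mathfrak{A}_{2n}^{\times}/\U^1(\mathfrak{A}_{2n})\cong\GL(2,k_{\F})^{n}$, which is not a torus. If you computed by hand, you would find that the centralizer of the image of $\sigma_f$ there is $(k_{\bar f}^{\times})^{n}$. Only the full condition $g^{-1}\beta_f g\in\beta_f+\mathfrak{A}_{2n}$ cuts this down to the diagonal copy of $k_{\bar f}^{\times}$, that is, back to $\textbf{J}_f$.
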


Lastly, we define an explicit Whittaker function $\mathcal{W}_{(f,  \, \chi,  \, \zeta)} \in W\left(\pi_{(f,  \, \chi,  \, \zeta)}, \psi_{\F}\right)$ via the following \cite[Subsection 3.2]{Luo}:

\hspace{0pt}\resizebox{1.0\linewidth}{!}{
  \begin{minipage}{\linewidth}
\begin{align*}
    g \mapsto  \begin{cases} 
      \hfill \psi_{2n}(u)  \Lambda_{(f,  \, \chi,  \, \zeta)}(h) &, \text{ if } g = uh \in \N(2n, \F) \, \textbf{J}_{f} \text{ with } u \in \N(2n, \F), \, h \in \textbf{J}_{f} \\
      \hfill 0 & ,  \text{ otherwise}
   \end{cases}.
\end{align*}
\end{minipage}
}

\subsection{Twisting by tamely ramified quasi-characters}\label{TamelyRamifiedAutomorphic}

In this subsection, we recall the formula for the twisted gamma factor between a middle supercuspidal of $\GL(2n, \F)$ and a tamely ramified quasi-character $\nu$ of $\F^{\times}$, i.e.~$\nu$ is trivial on $1 + \mathcal{P}_{\F}$ \cite[Corollary 3.7]{Luo}.
\begin{proposition}\label{AutomorphicTamelyRamified}
Let $\pi_{(f, \, \chi, \, \zeta)}$ be a middle supercuspidal representation of $\GL(2n, \F)$ and $\nu$ a tamely ramified quasi-character of $\F^{\times}$.
\[
\gamma\left(s, \, \pi_{(f,  \, \chi,  \, \zeta)} \times \nu, \, \psi_{\F}\right) = \zeta^{-1} \, \nu(-c^{-1}\varpi_{\F}^{2}) \, q_{\F}^{1-2s}.
\]
\end{proposition}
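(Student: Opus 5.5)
This is the case $m=1$ of the functional equation in Theorem~\ref{GammaFactorEquation}, with $n$ replaced by $2n$, $\pi_1=\pi_{(f,\chi,\zeta)}$ and $\pi_2=\nu$. The plan is to evaluate both sides using the explicit Whittaker function $\mathcal{W}:=\mathcal{W}_{(f,\chi,\zeta)}$, possibly translated by a suitable element, and to read off $\gamma$.

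For $m=1$, $\N(1,\F)$ is trivial, $W_{\nu}=\nu$, and
\[
\Psi(s;\mathcal{W},\nu)=\int_{\F^\times}\mathcal{W}\left(\begin{pmatrix} a & \\ & I_{2n-1}\end{pmatrix}\right)\nu(a)|a|^{s-\frac{2n-1}{2}}\,d^\times a .
\]
We first determine for which $a$ the element $\mathrm{diag}(a,1,\dots,1)$ lies in $\N(2n,\F)\textbf{J}_f$. Since $\textbf{J}_f\subset \E_f^\times \mathfrak{A}_{2n}^\times$ and $\mathfrak{A}_{2n}$ in \eqref{MiddleOrder} has diagonal blocks $\mathfrak{I}_n$, the diagonal entries force $a\in\mathcal{O}_\F^\times$. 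On $\mathcal{O}_\F^\times\subset \mathcal{O}_{\L_f}^\times$, the character $\Lambda$ restricts to $\chi|_{k_\F^\times}$. Hence $\Psi$ is a constant: it is $\mathrm{vol}(\mathcal{O}_\F^\times)$ if $\nu\chi|_{\mathcal{O}_\F^\times}$ is trivial and $0$ otherwise. If it vanishes, we replace $\mathcal{W}$ by a right translate by a diagonal element of $\mathcal{O}_{\L_f}^\times$-type (or by a torus element) chosen so that the integrand is supported on a single coset $a_0\mathcal{O}_\F^\times$ and the resulting $\Psi$ is a nonzero multiple of a character value.

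For the dual side we must compute
\[
\widetilde{\Psi}\bigl(1-s;\rho(w_{2n,1})\widetilde{\mathcal{W}},\nu^{-1}\bigr)=\int_{\F^\times}\int_{\F^{2n-2}}\mathcal{W}\left(w_{2n}\,{}^t\!\begin{pmatrix} a&&\\ x&I&\\ &&1\end{pmatrix}^{-1} w_{2n,1}^{-1}\right)\nu^{-1}(a)|a|^{\frac{1}{2}-s+\dots}\,dx\,d^\times a
\]
(up to the exact transpose-inverse bookkeeping). The key step is to determine, for each $a$ and $x$, whether the resulting matrix lies in $\N(2n,\F)\textbf{J}_f$, and to evaluate $\Lambda$ there. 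The expected outcome is that the support consists of a single shell $a\in c^{-1}\varpi_\F^{2}\mathcal{O}_\F^\times$ (up to sign), with $x$ ranging over a lattice. On this support the matrix factors as $u\cdot\beta_f^{-1}\cdot(\text{element of }\J_f)$ or similar. Evaluating $\Lambda(\beta_f^{-1})$ gives $\zeta^{-1}$, the $\nu$-value gives $\nu(-c^{-1}\varpi_\F^2)$, and $|a|^{1-2s}$-type terms together with the $x$-volume produce the factor $q_\F^{1-2s}$. The factor $\omega_\nu(-1)^{2n-1}=\nu(-1)$ accounts for the sign.

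The main obstacle is this support computation. One has to write $w_{2n}\,{}^t g^{-1} w_{2n,1}$ explicitly, then find an Iwahori-style decomposition against $\mathfrak{A}_{2n}$ that isolates the $(1,2n)$ and $(n+1,2n)$ entries of $\beta_f$, which carry $c/\varpi_\F^2$ and $d/\varpi_\F$. Irreducibility of $\bar f$ is what should prevent other shells from contributing. I would first do $n=2$ by hand to identify the correct conjugating element, then generalize. Any residual $\psi$-sums over $x$ should collapse, either to the full lattice volume or to zero via orthogonality with $\psi_{\beta_f}$.
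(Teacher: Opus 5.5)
\textbf{There is a genuine gap.} The paper does not reprove this statement; it quotes \cite[Corollary~3.7]{Luo}. Your framework is the natural one: the functional equation with $m=1$ and the explicit $\mathcal{W}$, exactly as in the proof of Proposition~\ref{AutomorphicProposition2}. But the one step you actually compute is wrong, and the step that carries all the content is only asserted.

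\textbf{The computation of $\Psi$ is wrong.} The matrix $\operatorname{diag}(a,1,\dots,1)$ is not the scalar $a\in\mathcal{O}_{\F}^{\times}\subset\mathcal{O}_{\L_f}^{\times}$.
\begin{itemize}
\item Reduction modulo $\mathfrak{P}_{2n}$ identifies $\mathfrak{A}_{2n}^{\times}/\U^{1}(\mathfrak{A}_{2n})$ with $\GL(2,k_{\F})^{n}$. The $i$-th factor is read off, suitably scaled, from the entries $(i,i),(i,n+i),(n+i,i),(n+i,n+i)$.
\item By \eqref{decomposition}, $\J_f$ maps onto a diagonally embedded copy of $k_{\L_f}^{\times}$.
\item By contrast, $u\operatorname{diag}(a,1,\dots,1)$ with $u\in\N(2n,\F)\cap\mathfrak{A}_{2n}^{\times}$ maps to upper triangular matrices that are unipotent in the factors $2,\dots,n$.
\end{itemize}
So on units the support is $a\in 1+\mathcal{P}_{\F}$. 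There $\Lambda=\psi_{\beta_f}$ is trivial (since $(\beta_f)_{1,1}=0$), and so is $\nu$. Hence $\Psi(s;\mathcal{W},\nu)=\vol^{\times}(1+\mathcal{P}_{\F})$ for \emph{every} tamely ramified $\nu$. Your $\chi$-dependent dichotomy is false and no translate is needed. Carrying $\vol(\mathcal{O}_{\F}^{\times})$ forward would make the answer off by a factor $q_{\F}-1$. Your ``diagonal entries'' argument for excluding $\val_{\F}(a)\neq 0$ also ignores the $\E_f^{\times}$-part of $\textbf{J}_f$ and the factor $\N(2n,\F)$; a determinant and double-coset argument is needed there.

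\textbf{The dual integral is not computed, and the predicted outcome cannot be right as stated.}
\begin{itemize}
\item A full shell $a\in c^{-1}\varpi_{\F}^{2}\mathcal{O}_{\F}^{\times}$, with $\mathcal{W}$ constant on it as you assume, would give $0$ against any ramified tame $\nu$, not $\nu(-c^{-1}\varpi_{\F}^{2})$.
\item With your weight $\nu^{-1}(a)$ and an argument of determinant $\pm a^{-1}$, a factorization through $\beta_f^{-1}$ puts $a$ in valuation $-2$, not $2$.
\item You count the sign twice. The integral yields $\nu(c^{-1}\varpi_{\F}^{2})$; the $-1$ comes only from $\nu(-1)^{2n-1}$.
\end{itemize}

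\textbf{What has to be done.} After the change of variables of \cite{Ye2} (as in \eqref{TildePsiEq1}, now with $m=1$), the integrand is $\mathcal{W}(M)\,\nu(h)\,|h|^{s-\frac{2n-1}{2}}$. Here $M=\begin{pmatrix} & 1 & \\ & & I_{2n-2}\\ h & & x\end{pmatrix}$ and $x=(x_3,\dots,x_{2n})$ is indexed by column.
\begin{itemize}
\item Since $\beta_f$ normalizes $\J_f$, $M\in\N(2n,\F)\,\beta_f^{-1}\J_f$ iff $uM\beta_f\in\J_f$ for some $u\in\N(2n,\F)$.
\item One computes $M\beta_f=I_{2n}+\frac{d}{\varpi_{\F}}e_n{}^{t}e_{2n}+e_{2n}{}^{t}r$, with $r=\bigl(0,x_3,\dots,x_{2n},\frac{ch}{\varpi_{\F}^{2}}+\frac{dx_{n+1}}{\varpi_{\F}}-1\bigr)$.
\item Taking $u=I_{2n}-\frac{d}{\varpi_{\F}}e_n{}^{t}e_{2n}$ gives $I_{2n}+\bigl(e_{2n}-\frac{d}{\varpi_{\F}}e_n\bigr){}^{t}r$. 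This lies in $\U^{1}(\mathfrak{A}_{2n})$ exactly when $h\in c^{-1}\varpi_{\F}^{2}(1+\mathcal{P}_{\F})$, $x_3,\dots,x_{n+1}\in\mathcal{P}_{\F}^{2}$, and $x_{n+2},\dots,x_{2n}\in\mathcal{P}_{\F}$.
\item Conversely, these conditions are forced. Every $uM\beta_f$ has first column $e_1$, which forces the $k_{\L_f}^{\times}$-component to be trivial, and its last row is ${}^{t}(e_{2n}+r)$.
\item On this set $\mathcal{W}(M)=\zeta^{-1}$, because the trace in $\psi_{\beta_f}$ equals $\frac{c}{\varpi_{\F}^{2}}r_1=0$.
\item The $x$-volume $q_{\F}^{2-2n}$ times $|h|^{s-\frac{2n-1}{2}}=q_{\F}^{2n-1-2s}$ gives $q_{\F}^{1-2s}$.
\end{itemize}
Hence $\widetilde{\Psi}=\zeta^{-1}\nu(c^{-1}\varpi_{\F}^{2})\,q_{\F}^{1-2s}\vol^{\times}(1+\mathcal{P}_{\F})$, and the factor $\nu(-1)^{2n-1}$ supplies the sign.

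You would still have to show that no other double coset $\N(2n,\F)\,\varpi_{\F}^{l}\beta_f^{k}\J_f$ meets these matrices; this is the analogue of \cite[Lemma~3.11]{Luo}. Deferring all of this to ``do $n=2$ by hand and generalize'' leaves the proposition unproved.
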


\subsection{Twisting by simple supercuspidal representations}\label{SimpleSupercuspidalAutomorphic}
In this subsection, we compute the twisted gamma factor between a middle supercuspidal $\pi_{(f, \, \chi, \, \zeta)}$ of $\GL(2n, \F)$ and a simple supercuspidal $\pi_{(v, \, \varphi, \, \zeta')}$ of $\GL(n, \F)$.~We set $\mathcal{W} := \mathcal{W}_{(f,  \, \chi,  \, \zeta)}$ and $\mathcal{W}' := \mathcal{W}_{\left(v,  \, \varphi,  \, \zeta'\right)}$.~Let 
\[
g_{v} := \begin{pmatrix}
    1 & & \frac{1}{v\varpi_{\F}} & &  \\
    & \ddots & & &  \\
    & & 1 & &  \\
    & & & \ddots & \\
    & & & & 1
\end{pmatrix}
\]
where $\frac{1}{v\varpi_{\F}} = (g_{v})_{1, \, n+1}$ and let $\pi_{(f, \, \chi, \, \zeta)}(g_{v})   \mathcal{W} \in W\left(\pi, \psi_{\F}\right)$ be the Whittaker function given by right translation of $\mathcal{W}$ by $g_{v}$.~From Theorem \ref{GammaFactorEquation}, we have
\begin{align}\label{GammaSimple}
    & \gamma\left(s, \, \pi_{(f, \, \chi, \, \zeta)}\times \pi_{(v, \, \varphi, \, \zeta')}, \, \psi_{\F}\right) \\ \notag
    & = \omega_{(v,\,\varphi,\,\zeta')} (-1)^{2n-1}\left(\frac{\widetilde{\Psi}\left(1-s; \, \rho(w_{2n, n})\widetilde{\pi_{(f, \, \chi, \, \zeta)}(g_{v})  \mathcal{W}}, \, \widetilde{\mathcal{W}'}\right)}{\Psi\left(s; \, \pi_{(f, \, \chi, \, \zeta)}(g_{v})   \mathcal{W}, \, \mathcal{W}'\right)}\right) \\ \notag
    &= \omega_{(v,\,\varphi,\,\zeta')} (-1)\left(\frac{\widetilde{\Psi}\left(1-s; \, \rho(w_{2n, n})\widetilde{\pi_{(f, \, \chi, \, \zeta)}(g_{v})  \mathcal{W}}, \, \widetilde{\mathcal{W}'}\right)}{\Psi\left(s; \, \pi_{(f, \, \chi, \, \zeta)}(g_{v})   \mathcal{W}, \, \mathcal{W}'\right)}\right). 
\end{align}

Recall that $\Psi\left(s; \, \pi_{(f, \, \chi, \, \zeta)}(g_{v}) \mathcal{W}, \, \mathcal{W}'\right)$ is given by the following proposition \cite[Proposition 3.9]{Luo}.
\begin{proposition}\label{AutomorphicProposition1}
\normalfont
    $\Psi\left(s; \, \pi_{(f, \, \chi, \, \zeta)}(g_{v})   \mathcal{W}, \, \mathcal{W}'\right) = \vol\left(\bigbackslant{\N\left(n, \F\right)}{\N\left(n, \F\right) \U^{1}\left(\mathfrak{I}_{n}\right)}\right)$.
\end{proposition}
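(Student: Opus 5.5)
The plan is to compute the integral
\[
\Psi\left(s; \, \pi_{(f, \, \chi, \, \zeta)}(g_{v})\mathcal{W}, \, \mathcal{W}'\right)=\int_{\smallbackslant{\N(n,\F)}{\GL(n,\F)}} \mathcal{W}\left(\begin{pmatrix} h & \\ & I_n\end{pmatrix} g_v\right)\mathcal{W}'(h)\,|\det h|^{s-\frac n2}\,dh
\]
directly from the supports of the two explicit Whittaker functions. The function $\mathcal{W}'$ vanishes off $\N(n,\F)\textbf{J}_v$, and $\textbf{J}_v=\langle\beta_v\rangle\,\mathcal{O}_{\F}^{\times}\U^1(\mathfrak{I}_n)$. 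So I would first fix representatives $h=\beta_v^{k}\,z\,y$ with $k\in\mathbb{Z}$, $z\in\mu_{\F}'$ (or $\mathcal{O}_{\F}^\times$ modulo $1+\mathcal{P}_{\F}$), and $y\in \U^1(\mathfrak{I}_n)$ taken modulo $\N(n,\F)\cap\U^1(\mathfrak{I}_n)$. The integral then becomes a sum over $k$ and $z$ of integrals over $\bigbackslant{\N(n,\F)}{\N(n,\F)\U^1(\mathfrak{I}_n)}$. The key step is to decide for which of these representatives the element $\mathrm{diag}(h,I_n)g_v$ lies in $\N(2n,\F)\textbf{J}_f$.

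For the support question I would argue with valuations and determinants, in two parts.

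First, for $k\neq 0$: the element $\mathrm{diag}(\beta_v^k,I_n)$ has determinant of valuation $-k$. Elements of $\N(2n,\F)\textbf{J}_f$ have determinant valuation a multiple of $2$ coming from $\beta_f$ (since $\det\beta_f$ has valuation $-2$), so this already rules out odd $k$. For even $k\neq 0$, I would compare the lattice chain of $\mathfrak{A}_{2n}$ with the one stabilised by $\mathrm{diag}(\beta_v^k,1)g_v$. The point is that this element acts on the first block with a shift of period $n$ and trivially on the second, so it cannot normalise $\mathfrak{A}_{2n}$ modulo $\N(2n,\F)$.

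Second, for $k=0$: the point is to show that $\mathrm{diag}(z y, I_n) g_v\in\N(2n,\F)\textbf{J}_f$ forces $\bar z=1$. Here I would use the explicit shape \eqref{decomposition} of $\mathcal{O}_{\mathrm{L}_f}^\times$: a scalar $z$ on the first block only, together with $1$ on the second, is not congruent to an element of $\mathcal{O}_{\mathrm{L}_f}^\times\U^1(\mathfrak{A}_{2n})$ unless $\bar z=1$, because the diagonal of \eqref{decomposition} modulo $\mathcal{P}_{\F}$ is $(\bar a_0\bar c,\bar a_0\bar c+\bar a_1\bar d)$ with $\bar a_1=0$ forced.

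For the surviving terms ($k=0$, $z=1$) we have $|\det h|=1$, so the $s$-dependence disappears. I would write $\mathrm{diag}(y,I_n)g_v = g_v\cdot\big(g_v^{-1}\mathrm{diag}(y,I_n)g_v\big)$. Note that $g_v\in\N(2n,\F)$ contributes $\psi_{2n}(g_v)=1$, since its only nonzero off-diagonal entry is at $(1,n+1)$, which is not on the superdiagonal when $n\ge2$. Also $g_v^{-1}\mathrm{diag}(y,I_n)g_v\in\U^1(\mathfrak{A}_{2n})$, because its off-diagonal block $(y-1)\tfrac{1}{v\varpi_{\F}}$ lies in $\varpi_{\F}^{-1}\mathfrak{I}_n^+$.

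Then I would evaluate
\[
\psi_{\beta_f}\big(g_v^{-1}\mathrm{diag}(y,I_n)g_v\big)=\psi_{\F}\circ\tr\big(g_v\beta_fg_v^{-1}\,\mathrm{diag}(y-1,0)\big).
\]
The upper-left block of $g_v\beta_f g_v^{-1}$ is the companion-type matrix with subdiagonal $1$'s plus the entry $\tfrac{1}{v\varpi_{\F}}$ at $(1,n)$ (coming from the $(n+1,n)$ entry $1$ of $\beta_f$). This is exactly $\beta_v$ modulo terms pairing trivially with $\mathfrak{I}_n^+$, so the value equals $\psi_{\beta_v}(y)^{-1}$. Since $\mathcal{W}'(y)=\psi_{\beta_v}(y)$ (built with $\psi_{\F}^{-1}$), the integrand is identically $1$, and integrating over $\bigbackslant{\N(n,\F)}{\N(n,\F)\U^1(\mathfrak{I}_n)}$ gives the stated volume.

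I expect the main obstacle to be the support analysis: ruling out $k\ne0$ (especially even $k$) and $\bar z\neq1$ cleanly, while also checking that the $\N(n,\F)$-ambiguity in the decomposition does not create extra contributions. The first thing I would try is an $\mathfrak{A}_{2n}$-lattice/normaliser argument: $\textbf{J}_f$ normalises $\mathfrak{A}_{2n}$, whereas $\N(2n,\F)$-translates of the candidate elements fail to, by inspecting the block valuations.
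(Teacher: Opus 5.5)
The paper gives no proof of this statement; it quotes it from Proposition 3.9 of the authors' earlier paper on middle supercuspidals. So the question is what a complete argument needs.

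Most of your plan is sound:
\begin{itemize}
\item restricting to $\N(n,\F)\textbf{J}_v$ with $\textbf{J}_v=\langle\beta_v\rangle\mathcal{O}_{\F}^\times\U^1(\mathfrak{I}_n)$;
\item excluding odd $k$ by determinant valuations;
\item the evaluation on the surviving piece. Writing $\beta_f=(B_{ij})$ in $n\times n$ blocks, the upper-left block of $g_v\beta_fg_v^{-1}$ is $B_{11}+\tfrac{1}{v\varpi_{\F}}E_{1,1}B_{21}$ with $B_{21}=E_{1,n}$. This is exactly $\beta_v$, so the integrand is identically $1$.
\end{itemize}
Your $\bar z=1$ step is also right once you say why $\bar a_1=0$ is forced. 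Reduce into $(\mathfrak{A}_{2n}/\mathfrak{P}_{2n})^\times\cong\prod_{i=1}^n\GL(2,k_{\F})$, where the $i$-th factor comes from positions $(i,i),(i,n+i),(n+i,i),(n+i,n+i)$. There $\N(2n,\F)\cap\mathfrak{A}_{2n}^\times$ lands in the upper unipotents and $\mathcal{O}_{\L_f}^\times$ in a non-split torus, so the torus element must be scalar.

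The step that is not yet an argument is the exclusion of even $k\neq0$, which you flag as the main obstacle. ``Shift on the first block, trivial on the second'' says nothing about the left factor $u\in\N(2n,\F)$. Indeed $g_v=I_{2n}+\tfrac{1}{v\varpi_{\F}}E_{1,n+1}$ itself lies in $\N(2n,\F)\cap\mathfrak{A}_{2n}^\times$ and couples the two blocks. To compare lattice chains modulo $\N(2n,\F)$ you need an invariant that $\N(2n,\F)$ does not see.

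One that works is the following. Put $V_j=\langle e_1,\dots,e_j\rangle$. For a lattice $M\subset\F^{2n}$, let $m_j(M)\in\mathbb{Z}$ be the valuation of the image of $M\cap V_j$ in $V_j/V_{j-1}\cong\F$. Then $m(uM)=m(M)$ for all $u\in\N(2n,\F)$.

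The lattice chain of $\mathfrak{A}_{2n}$ is $\{\varpi_{\F}^s(L_t\oplus\varpi_{\F}L_t)\}$, where $L_t=\mathcal{O}_{\F}^{n-t}\oplus\mathcal{P}_{\F}^t$ and $0\le t<n$. Its $m$-vectors consist of the values $s,\ s+1,\ s+1,\ s+2$, repeated $n-t,\ t,\ n-t,\ t$ times respectively.

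Suppose $g=\mathrm{diag}(\beta_v^kzy,I_n)g_v=uj$ with $u\in\N(2n,\F)$ and $j\in\textbf{J}_f$.
\begin{itemize}
\item Since $j$ normalises $\mathfrak{A}_{2n}$, it permutes the chain.
\item Since $g_v$ and $\mathrm{diag}(zy,I_n)$ lie in $\mathfrak{A}_{2n}^\times$, we get $g(\mathcal{O}_{\F}^n\oplus\mathcal{P}_{\F}^n)=\beta_v^kL_0\oplus\mathcal{P}_{\F}^n$.
\item So this lattice has the $m$-vector of some member of the chain.
\item Its last $n$ entries all equal $1$, which forces $s=t=0$.
\item Hence $\beta_v^kL_0$, which is again of the form $\varpi_{\F}^{-a}L_{t'}$, has $m$-vector $(0,\dots,0)$. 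So it equals $L_0$, and therefore $k=0$.
\end{itemize}
This disposes of every $k\neq0$ at once, makes the parity argument unnecessary, and completes your proof.
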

Next, we compute $\widetilde{\Psi}\left(1-s; \, \rho(w_{2n, n})\widetilde{\pi_{(f, \, \chi, \, \zeta)}(g_{v})\mathcal{W}}, \, \widetilde{\mathcal{W}'}\right)$.
\begin{proposition}\label{AutomorphicProposition2}
\begin{align*}
  & \widetilde{\Psi}\left(1-s; \, \rho(w_{2n, n})\widetilde{\pi_{(f, \, \chi, \, \zeta)}(g_{v})\mathcal{W}}, \, \widetilde{\mathcal{W}'}\right) \\
  &= \zeta^{-n} \, \chi\left(-\frac{cv + \sigma_{f}}{vf\left(v^{-1}\right)}\right) \, \omega_{(v,\,\varphi,\,\zeta')}\left(-\frac{\varpi_{\F}^{2}}{f\left(v^{-1}\right)}\right) \, q_{\F}^{n(1-2s)} \, \vol\left(\bigbackslant{\N\left(n, \F\right)}{\N\left(n, \F\right) \U^{1}\left(\mathfrak{I}_{n}\right)}\right).
\end{align*}
\end{proposition}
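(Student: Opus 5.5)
We unwind the definition of $\widetilde{\Psi}$ directly, using the explicit support of the Whittaker functions $\mathcal{W}$ and $\mathcal{W}'$. Write $W := \pi_{(f,\chi,\zeta)}(g_v)\mathcal{W}$. Then
\[
\rho(w_{2n,n})\widetilde{W}(g) = \mathcal{W}\bigl(w_{2n}\,{}^{t}(g\,w_{2n,n})^{-1} g_v\bigr),
\]
and $\widetilde{\mathcal{W}'}(h) = \mathcal{W}'(w_n\,{}^{t}h^{-1})$. Since $m=n$ and $n-m-1=n-1$, the inner integral is over $x\in\Mat(n-1\times n,\F)$ and $h\in\N(n,\F)\backslash\GL(n,\F)$.

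\textbf{Step 1: compute the argument.} Put
\[
g = \begin{pmatrix} h & & \\ x & I_{n-1} & \\ & & 1\end{pmatrix}w_{2n,n}
\]
and compute the matrix $M(h,x) := w_{2n}\,{}^{t}g^{-1}g_v$ in $n\times n$ block form. Its lower-right block involves $w_n\,{}^{t}h^{-1}$, and the $x$-variable appears in off-diagonal blocks.

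\textbf{Step 2: restrict the support.} The integrand is nonzero only if
\begin{itemize}
\item[(a)] $w_n\,{}^{t}h^{-1}\in \N(n,\F)\mathbf{J}_v$, and
\item[(b)] $M(h,x)\in \N(2n,\F)\mathbf{J}_f$.
\end{itemize}
Condition (a) lets us replace $h$ by $h'=w_n\,{}^{t}h^{-1}$ and write $h'=u\,\beta_v^{k}\,z\,y$ with $z\in\mathcal{O}_\F^\times$ and $y\in\U^1(\mathfrak{I}_n)$. Substituting this into (b), the key task is to show that (b) forces $k$ to be a single value. This value should be $k\equiv$ (the one giving $|\det h|^{\cdots}=q_\F^{n(1-2s)}$), i.e.\ $\det h$ of valuation $-2$ up to units after accounting for the $\varpi_\F^{2}$ factors. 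Condition (b) should also force $x$ into a specific coset of a compact lattice whose volume combines with the $h$-integral to give $\vol(\N(n,\F)\backslash\N(n,\F)\U^1(\mathfrak{I}_n))$.

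I would attack Step 2 by conjugating $M(h,x)$ by an element of $\mathbf{J}_f$ of the form $\beta_f^{j}$, and then reducing modulo $\mathfrak{P}_{2n}$. Using the block description \eqref{MiddleOrder}, membership in $\N(2n,\F)\mathbf{J}_f$ becomes an Iwahori-type condition on the reductions of the blocks. The $g_v$ entry $\tfrac{1}{v\varpi_\F}$ in position $(1,n+1)$ couples the two blocks, and the reduction involves $\bar f$ evaluated at $v^{-1}$. This is where $f(v^{-1})$ enters, and irreducibility of $\bar f$ guarantees $f(v^{-1})\in\mathcal{O}_\F^\times$.

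\textbf{Step 3: evaluate on the support.} On the surviving set, write $M(h,x)=u'\,\beta_f^{n}\,\ell\,y'$ with $\ell\in\mathcal{O}_{\L_f}^\times$, identified via the matrix form \eqref{decomposition}. We need to solve explicitly for $a_0,a_1$. I expect
\[
\ell = -\frac{cv+\sigma_f}{v f(v^{-1})}
\]
modulo $1+\mathcal{P}_{\L_f}$, found by matching the $(1,1)$ and $(n+1,1)$ entries. Then:
\begin{itemize}
\item $\Lambda_{(f,\chi,\zeta)}$ contributes $\zeta^{-n}\chi(\ell)$, because $\beta_f^{-n}$ relates to $\sigma_f^{-1}\varpi_\F$ and the $\chi(\sigma_f)$ factors cancel;
\item the central part of $h'$ contributes $\omega_{(v,\varphi,\zeta')}\bigl(-\varpi_\F^2/f(v^{-1})\bigr)$;
\item the characters $\psi_{2n}(u')$ and $\psi_{\beta_f}(y')$ cancel against $\psi_n^{-1}$ and $\psi_{\beta_v}$ on the $h$-side after the substitution;
\item $|\det h|^{1-s-n/2}$ gives $q_\F^{n(1-2s)}$.
\end{itemize}
Combining these with the volume from Step 2 gives the stated formula.

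\textbf{Main obstacle.} I expect the hardest part to be the support analysis in Step 2, namely proving that no other $k$ or $x$ contribute, and the unipotent bookkeeping that makes the additive characters cancel exactly. For this I would first do the $n=2$ case by hand to find the correct coset representatives, then generalize.
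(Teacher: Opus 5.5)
Your outline has the same architecture as the paper's proof. After substituting $h'=w_n\,{}^{t}h^{-1}$ and $x''=-h'\,{}^{t}x\,w_{n-1}$, your matrix $M(h,x)$ is exactly the paper's $\alpha(h',x'')$, with $h'$ in the lower-left $n\times n$ block and $x''$ in the lower-right one (not as you describe). The paper likewise evaluates the integral by locating the support and factoring through $\mathbf{J}_f$.

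\textbf{The gap.} The proposal stops where the proof has its content: Step~2 and the factorization in Step~3 are only ``expected''. The paper takes them from \cite[Lemma 3.11 and proof of Proposition 3.10]{Luo}. If $\alpha(h',x'')$ is in the support of $\mathcal{W}$, it lies in the single double coset $\N(2n,\F)\,\beta_f^{-n}\,\J_f$. This forces $h'\in a(v)v\varpi_\F^{2}\,\U^{1}(\mathfrak{I}_n)$ modulo $\N(n,\F)$, $a_1\equiv a(v)$, $a_0\equiv a_1v$, and $x''$ in the set \eqref{XValues}. One then writes down $\alpha(h',x'')=u\,\beta_f^{-n}\,a(v)(cv+\sigma_f)\,z(h',x'')$ with the explicit $u=\begin{pmatrix} I_n & (dv-1)(v\varpi_\F)^{-1}I_n\\ & I_n\end{pmatrix}$, which has $\psi_{2n}(u)=1$, and $z(h',x'')\in\U^{1}(\mathfrak{A}_{2n})$.

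Your proposed mechanism (conjugate by a power of $\beta_f$ and reduce modulo $\mathfrak{P}_{2n}$) cannot by itself decide membership in $\N(2n,\F)\mathbf{J}_f$, because $\N(2n,\F)$ is not compact. You have to exhibit the unipotent factor explicitly and exclude every other $\N(2n,\F)\beta_f^{j}\J_f$. Likewise, fixing $k$ as ``the one giving $q_\F^{n(1-2s)}$'' is circular. The support condition must force $k=-2n$ and $z\equiv a(v)/v$, and that is the heart of the argument.

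\textbf{Errors in the anticipated details.} Several of them would derail the computation as stated.
\begin{enumerate}
\item The factorization must involve $\beta_f^{-n}$, not $\beta_f^{n}$. Indeed $\det M=\pm\det h'$ has valuation $2n$, as does $\det\beta_f^{-n}=(-c\varpi_\F^{-2})^{-n}$. With $\beta_f^{n}$ you would get $\Lambda_{(f,\chi,\zeta)}(\beta_f^{n})=\zeta^{n}$, the wrong power of $\zeta$. Correspondingly, your $\det h$ has valuation $-2n$, not $-2$.
\item More seriously, $|\det h|^{1-s-n/2}$ alone does not give $q_\F^{n(1-2s)}$: with $|\det h|=q_\F^{2n}$ it equals $q_\F^{2n(1-s)-n^{2}}$. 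The missing $q_\F^{n^{2}-n}$ comes from the $x$-integral. The Jacobian $|\det h|^{n-1}$ of the substitution converts the exponent to $|\det h'|^{s-n/2}=q_\F^{n^{2}-2ns}$. The integral over \eqref{XValues} then contributes $q_\F^{n-n^{2}}$, from $\tfrac{n(n-1)}{2}$ entries of volume $q_\F^{-1/2}$ and $\tfrac{n(n-1)}{2}$ of volume $q_\F^{-3/2}$.
\item So the $x$-lattice is not absorbed into $\vol(\N(n,\F)\backslash\N(n,\F)\U^{1}(\mathfrak{I}_n))$. That volume comes from the $h'$-integral alone, after right translation by $(a(v)v\varpi_\F^{2})^{-1}$. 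It also uses the fact that $\psi_{\beta_f}(z(h',x''))$ does not depend on $x''$ and cancels $\mathcal{W}'$ exactly, as in \eqref{WhittakerEq1} and \eqref{WhittakerEq2}.
\end{enumerate}
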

\begin{remark}
    \normalfont
    The beginning of the proof is to use a change-of-variables argument from~\cite[Proof of Theorem 3.1]{Ye2}.~We note that this contains a minor error:~the exponent of $|\det(h)|$ should be $s - \frac{n-m}{2}$ instead of $s - 2 + \frac{n-m}{2}$.
\end{remark}
\begin{proof}
Using a change of variables when $\text{Re} (s) \ll 0$ \cite[Proof of Theorem 3.1]{Ye2}, we have that 
\begin{align}\label{TildePsiEq1}
    & \widetilde{\Psi}\left(1-s; \, \rho(w_{2n, n})\widetilde{\pi_{(f, \, \chi, \, \zeta)}(g_{v})\mathcal{W}}, \, \widetilde{\mathcal{W}'}\right)  \\ \notag
    &= \int\displaylimits_{\smallbackslant{\N(n, \, \F)}{\GL(n, \, \F)}}\,\int\displaylimits_{\Mat(n \times n-1, \, \F)}  \mathcal{W}\left(\begin{pmatrix}
          & 1 & \\
          &   & I_{n-1} \\
          h & & x
    \end{pmatrix}g_{v}\right)\mathcal{W}'(h)|\det(h)|^{s-\frac{n}{2}} \, dx  \, dh.
\end{align}
Let 
\begin{equation}\label{GL(2)}
    \alpha(h, x) := \begin{pmatrix}
          & 1 & \\
          &   & I_{n-1} \\
          h & & x
    \end{pmatrix}g_{v}
\end{equation}
with $h = (h_{i, j}) \in \N(n, \F) \backslash \GL(n, \F)$ and $x \in \Mat(n \times n-1, \F)$.~From \cite[Proposition 4.5.2 (Iwasawa decomposition)]{Bump}, we may choose representatives $h \in \A(n, \F) \GL(n, \mathcal{O}_{\F})$.~If $\alpha(h, x)$ lies in the support of $\mathcal{W}$, then $\alpha(h, x)$ lies in the double coset
\[
\N(2n, \F) \, \beta_{f}^{-n} \, \J_{f}
\]
\cite[Lemma 3.11, p.~17]{Luo}.

Suppose that $\alpha(h, x)$ lies in the support of $\mathcal{W}$.~Then 
\[
\alpha(h, x) = u \, \beta_{f}^{-n} \, (a_{0}c + a_{1}\sigma_{f}) \, z(h, x)
\]
for some $u \in \N(2n, \F)$, $a_{0}c + a_{1}\sigma_{f} \in \mathcal{O}_{\text{L}_{f}}^{\times}$, and $z(h, x) \in \U^{1}\left(\mathfrak{A}_{2n}\right)$ with $a_{0} \equiv a_{1}v \pmod{\mathcal{P}_{\F}}$ and $a_{1} \equiv a(v) \pmod{\mathcal{P}_{\F}}$ where
\[
a(v) := \frac{v}{cv^{2}+dv-1}
\] 
\cite[Proof of Proposition 3.10]{Luo}.~Let $\C_{1}(h)$ be the $n \times 1$ column vector given by the first column of $h$, and 
\[
\C(v, h) := (v\varpi_{\F})^{-1}\C_{1}(h).
\]
Expanding $\alpha(h, x)$, we have that it is of the form \cite[Proof of Proposition 3.10]{Luo}:
\[
\alpha(h, x) = \begin{pmatrix}
          & 1 & \\
          &   & I_{n-1} \\
          h & \C(v, h) & x
    \end{pmatrix}
\]
with $h \in a(v)v\varpi_{\F}^{2} \U^{1}\left(\mathfrak{I}_{n}\right)$ and the entries of $x$ satisfying:
\begin{align}\label{XValues}
    & x_{i, j}  \in \mathcal{P}_{\F} \, \text{ for } \, 1 \leq i \leq j \leq n-1, \\ \notag
    & x_{i, i-1}  \in a(v) \varpi_{\F}\left(1 + \mathcal{P}_{\F}\right) \, \text{ for } \, 2 \leq i \leq n, \text{ and } \\ \notag
    & x_{i, j}  \in \mathcal{P}_{\F}^{2} \, \text{ for all other entries}.
\end{align}

Hence, we may decompose $\alpha(h, x)$ as 
\begin{align*}
    \alpha(h, x) &= \begin{pmatrix}
          & 1 & \\
          &   & I_{n-1} \\
          h & \C(v, h) & x
    \end{pmatrix} \\ \notag
    &= u \, \beta_{f}^{-n} \, a(v)(cv + \sigma_{f}) \, z(h, x)
\end{align*}
where
\[
u = 
\begin{pmatrix}
I_{n} & (dv-1)(v\varpi_{\F})^{-1}\, I_{n} \\
 & I_{n}
\end{pmatrix} \in \N(2n, \F)
\]
and
\[
z(h, x)= \begin{pmatrix}
    (a(v)v \varpi_{\F}^{2})^{-1}h & (a(v)v \varpi_{\F}^{2})^{-1}\left(\C(v, h) x - a(v)\varpi_{\F} \, I_{n}\right) \\
    & I_{n}
\end{pmatrix} \in \U^{1}\left(\mathfrak{A}_{2n}\right)
\]
with $\C(v, h) x$ denoting the $n \times n$ matrix $\begin{pmatrix}
    \C(v, h) & x 
\end{pmatrix}$.~This factorization along with the fact that 
\[
|\det(h)| = q_{\F}^{-2n}
\]
for $h \in a(v)v\varpi_{\F}^{2} \U^{1}\left(\mathfrak{I}_{n}\right)$ enables us to rewrite \eqref{TildePsiEq1} as
    \begin{align}\label{Align}
    & \int\displaylimits_{\smallbackslant{\N(n, \, \F)}{\N(n, \, \F) \, a(v)v\varpi_{\F}^{2} \U^{1}\left(\mathfrak{I}_{n}\right)}}\,\int\displaylimits_{\Mat(n \times n-1, \, \F)} \mathcal{W}\left(\alpha(h, x)\right)\mathcal{W}'(h)|\det(h)|^{s-\frac{n}{2}} \, dx  \, dh \\ \notag
    &= q_{\F}^{n^{2}-2ns}\int\displaylimits_{\smallbackslant{\N(n, \, \F)}{\N(n, \, \F) \, a(v)v\varpi_{\F}^{2} \U^{1}\left(\mathfrak{I}_{n}\right)}}\,\int\displaylimits_{\Mat(n \times n-1, \, \F)} \mathcal{W}\left(\alpha(h, x)\right)\mathcal{W}'(h) \, dx  \, dh \\ \notag
    &= \zeta^{-n} \, \chi(a(v)(cv + \sigma_{f})) \, \omega_{(v,\,\varphi,\,\zeta')}\left(a(v)v \varpi_{\F}^{2}\right) \, q_{\F}^{n^{2}-2ns} \\ \notag
    & \hspace{0.3in} \cdot \int\displaylimits_{\smallbackslant{\N(n, \, \F)}{\N(n, \, \F) \, a(v)v\varpi_{\F}^{2} \U^{1}\left(\mathfrak{I}_{n}\right)}}\,\int\displaylimits_{\Mat(n \times n-1, \, \F)}  \mathcal{W}\left(z(h, x)\right)\mathcal{W}'((a(v)v\varpi_{\F}^{2})^{-1}h) \, dx  \, dh. 
\end{align}

Next, we see that 
\begin{equation}\label{WhittakerEq1}
\mathcal{W}\left(z(h, x)\right) = \psi_{\F}\left((a(v)v\varpi_{\F}^{2})^{-1}\left(h_{n, \, 1}(v\varpi_{\F})^{-1} + \sum_{i=1}^{n-1}h_{i, \, i+1} \right)\right)
\end{equation}
as $\mathcal{W}$ is equal to $\psi_{\beta_{f}}$ on $\U^{1}\left(\mathfrak{A}_{2n}\right)$.~Similarly, 
\begin{equation}\label{WhittakerEq2}
\mathcal{W}'((a(v)v\varpi_{\F}^{2})^{-1}h) = \psi_{\F}^{-1}\left((a(v)v\varpi_{\F}^{2})^{-1}\left(h_{n, \, 1}(v\varpi_{\F})^{-1} + \sum_{i=1}^{n-1}h_{i, \, i+1} \right)\right)
\end{equation}
as $\mathcal{W}'$ is equal to $\psi_{\beta_{v}}$ on $\U^{1}\left(\mathfrak{I}_{n}\right)$.

Equations \eqref{WhittakerEq1} and \eqref{WhittakerEq2}, together with \eqref{XValues}, imply that \eqref{Align} is equal to
\begin{align*}
    & \zeta^{-n} \,  \chi(a(v)(cv + \sigma_{f})) \, \omega_{(v,\,\varphi,\,\zeta')}\left(a(v)v \varpi_{\F}^{2}\right) \, q_{\F}^{n^{2}-2ns} \, \vol\left(\N\left(n, \F\right) \backslash \N\left(n, \F\right) \U^1\left(\mathfrak{I}_{n}\right)\right) \\
    & \cdot \int\displaylimits_{\mathcal{P}_{\F}} dx_{1, \, 1} \cdots \int\displaylimits_{\mathcal{P}_{\F}} dx_{n-1, \, n-1}  \int\displaylimits_{a(v)\varpi_{\F}(1+\mathcal{P}_{\F})} dx_{2, \, 1} \, \cdots \int\displaylimits_{a(v)\varpi_{\F} (1+\mathcal{P}_{\F})} dx_{n, \, n-1} \int\displaylimits_{\mathcal{P}_{\F}^{2}} dx_{n, \, 1}  \cdots \int\displaylimits_{\mathcal{P}_{\F}^{2}} dx_{n, \, n-2} \\
    &= \zeta^{-n} \, \chi(a(v)(cv + \sigma_{f})) \, \omega_{(v,\,\varphi,\,\zeta')}\left(a(v)v \varpi_{\F}^{2}\right) \, q_{\F}^{n^{2}-2ns} \, q_{\F}^{n-n^{2}} \, \vol\left(\N\left(n, \F\right) \backslash \N\left(n, \F\right) \U^1\left(\mathfrak{I}_{n}\right)\right) \\
    &= \zeta^{-n} \, \chi(a(v)(cv + \sigma_{f})) \, \omega_{(v,\,\varphi,\,\zeta')}\left(a(v)v \varpi_{\F}^{2}\right) \, q_{\F}^{n\left(1-2s \right)} \, \vol\left(\N\left(n, \F\right) \backslash \N\left(n, \F\right) \U^1\left(\mathfrak{I}_{n}\right)\right).
\end{align*}

In the second equality above, we use the right-translation invariance of $dh$ and the fact that the product of the indicated $x_{i, \, j}$-integrals contributes a factor of $q_{\F}^{n-n^{2}}$.~Indeed, there are $\frac{n(n-1)}{2}$ entries of $x$ with volume $q_{\F}^{-1/2}$ and $\frac{n(n-1)}{2}$ entries of $x$ with volume $q_{\F}^{-3/2}$.~Finally, we note that 
\[
a(v) = -\frac{1}{vf\left(v^{-1}\right)}. \qedhere
\]
\end{proof}

Equation \eqref{GammaSimple}, together with Propositions \ref{AutomorphicProposition1} and \ref{AutomorphicProposition2}, now gives us the formula for $\gamma\left(s, \, \pi_{(f, \, \chi, \, \zeta)}\times \pi_{(v, \, \varphi, \, \zeta')}, \, \psi_{\F}\right)$.
\begin{corollary}\label{CorollaryMain}
Let $\pi_{(f, \, \chi, \, \zeta)}$ be a middle supercuspidal representation of $\GL(2n, \F)$ and $\pi_{(v, \, \varphi, \, \zeta')}$ a simple supercuspidal representation of $\GL(n, \F)$.~Then
\begin{equation*}
    \gamma\left(s, \, \pi_{(f, \, \chi, \, \zeta)}\times \pi_{(v, \, \varphi, \, \zeta')}, \, \psi_{\F}\right) = \zeta^{-n} \, \chi\left(-\frac{cv + \sigma_{f}}{vf\left(v^{-1}\right)}\right) \, \omega_{(v,\,\varphi,\,\zeta')}\left(\frac{\varpi_{\F}^{2}}{f\left(v^{-1}\right)}\right) \, q_{\F}^{n(1-2s)}.
\end{equation*}
\end{corollary}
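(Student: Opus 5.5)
The corollary follows by substituting Propositions~\ref{AutomorphicProposition1} and~\ref{AutomorphicProposition2} into the functional-equation expression~\eqref{GammaSimple}. That expression writes $\gamma\left(s, \, \pi_{(f, \, \chi, \, \zeta)}\times \pi_{(v, \, \varphi, \, \zeta')}, \, \psi_{\F}\right)$ as $\omega_{(v,\,\varphi,\,\zeta')}(-1)$ times the quotient of $\widetilde{\Psi}$ by $\Psi$, for the specific Whittaker functions $\pi_{(f, \, \chi, \, \zeta)}(g_v)\mathcal{W}$ and $\mathcal{W}'$.

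The steps are as follows.

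First, check that the denominator in~\eqref{GammaSimple} is nonzero. By Proposition~\ref{AutomorphicProposition1} it equals $\vol\left(\N(n,\F)\backslash \N(n,\F)\U^1(\mathfrak{I}_n)\right)$. This volume is positive and finite, since $\N(n,\F)\U^1(\mathfrak{I}_n)$ is open in $\GL(n,\F)$ and compact modulo $\N(n,\F)$. The functional equation of Theorem~\ref{GammaFactorEquation} can therefore be divided through legitimately.

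Second, divide the formula of Proposition~\ref{AutomorphicProposition2} by this volume. The volume cancels, leaving
\[
\zeta^{-n}\,\chi\left(-\frac{cv+\sigma_f}{vf(v^{-1})}\right)\,\omega_{(v,\,\varphi,\,\zeta')}\left(-\frac{\varpi_{\F}^2}{f(v^{-1})}\right)\,q_{\F}^{n(1-2s)}.
\]

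Third, multiply by the prefactor $\omega_{(v,\,\varphi,\,\zeta')}(-1)$ from~\eqref{GammaSimple}. Since $\omega_{(v,\,\varphi,\,\zeta')}$ is multiplicative on $\F^\times$,
\[
\omega_{(v,\,\varphi,\,\zeta')}(-1)\,\omega_{(v,\,\varphi,\,\zeta')}\left(-\frac{\varpi_{\F}^2}{f(v^{-1})}\right)=\omega_{(v,\,\varphi,\,\zeta')}\left(\frac{\varpi_{\F}^2}{f(v^{-1})}\right).
\]
This is exactly the stated formula.

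There is one point of rigor. Proposition~\ref{AutomorphicProposition2} was computed for $\mathrm{Re}(s)\ll 0$, while Proposition~\ref{AutomorphicProposition1} holds for $\mathrm{Re}(s)\gg 0$. Both sides are rational functions in $q_{\F}^{-s}$, and the computed expressions are already such functions, so they agree as elements of $\mathbb{C}(q_{\F}^{-s})$ and the identity holds as stated. I also need $f(v^{-1})$ to be a unit, so that the arguments of $\chi$ and $\omega$ make sense. This holds because $\bar f$ is irreducible over $k_{\F}$ and so has no root $\bar v^{-1}$.

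There is no real obstacle here. The only care needed is the sign bookkeeping: the factor $(-1)^{2n-1}=-1$ inside $\omega_{(v,\,\varphi,\,\zeta')}$ must cancel the minus sign in the $\omega_{(v,\,\varphi,\,\zeta')}$-argument of Proposition~\ref{AutomorphicProposition2}, while the minus sign inside $\chi$ stays.
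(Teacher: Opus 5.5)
Your proposal is correct and is exactly the paper's argument: substitute Propositions~\ref{AutomorphicProposition1} and~\ref{AutomorphicProposition2} into \eqref{GammaSimple}, cancel the volume, and use $\omega_{(v,\,\varphi,\,\zeta')}(-1)\,\omega_{(v,\,\varphi,\,\zeta')}\bigl(-\varpi_{\F}^{2}/f(v^{-1})\bigr)=\omega_{(v,\,\varphi,\,\zeta')}\bigl(\varpi_{\F}^{2}/f(v^{-1})\bigr)$. One small point: the prefactor in \eqref{GammaSimple}, following Theorem~\ref{GammaFactorEquation}, is the power $\omega_{(v,\,\varphi,\,\zeta')}(-1)^{2n-1}$ rather than $\omega_{(v,\,\varphi,\,\zeta')}$ evaluated at $(-1)^{2n-1}$, but both readings give $\omega_{(v,\,\varphi,\,\zeta')}(-1)$ because $\omega_{(v,\,\varphi,\,\zeta')}(-1)^{2}=1$.
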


\section{Galois Side}\label{GaloisSide}

In this section, we switch our attention to the Galois side of the local Langlands correspondence.~We first recall the parametrization of irreducible $n$-dimensional representations of the Weil group $\W_{\F}$ in the essentially tame case via admissible pairs, together with the local factors attached to admissible quasi-characters and induced irreducible Weil representations.~We then compute the gamma factor of the Langlands parameter associated with a middle supercuspidal representation of $\GL(2n,\F)$ twisted by a tamely ramified quasi-character of $\F^\times$.~Finally, we compute the gamma factor attached to the tensor product of the Langlands parameters corresponding to a middle supercuspidal representation of $\GL(2n, \F)$ and a simple supercuspidal representation of $\GL(n, \F)$.~Throughout this section, we assume that $\F$ has characteristic zero and impose the condition that $p \nmid 2n$.

\subsection{Representations of the Weil group}

In this subsection, we adopt the notation used in \cite[Section 2]{Adrian-Stevens}.~We normalize the additive valuation $\val_{\F}$ on $\F$ to have image $\mathbb{Z} \, \cup \, \{ \infty \}$, and extend this to an additive valuation on a fixed separable closure $\overline{\F}$ of $\F$; thus, for any finite extension $\E/\F$, we have $\val_{\F}(\E^{\times}) = \frac{1}{e(\E|\F)}\mathbb{Z}$.~We also write $| \cdot |$ for the normalized absolute value on $\overline{\F}$, given by
\[
|x| = q_{\F}^{-\val_{\F}(x)}, 
\]
for $x \in \overline{\F}^{\times}$.~Note that if $\E/\F$ is a finite extension, then the usual normalization of the absolute value on $\E$ is given by $x \mapsto |x|^{[\E \, : \, \F]}$, for $x \in \E$. 

Next, we define an additive quasi-character $\psi_{\E}$ of $\E$ by
\[
\psi_{\E}=\psi_{\F}\circ \tr_{\E/\F}.
\]
We note that $\psi_{\E}$ is also of level one.~For $r$ a real number, we write
\[
\mathfrak{P}_{\E}^{r} = \{x \in \E \, : \, \val_{\F}(x) \geq r \} \, \, \text{ and } \, \,
\mathfrak{P}_{\E}^{r+} = \{x \in \E \, : \, \val_{\F}(x) > r \}.
\]
From these definitions, $\mathfrak{P}_{\E}^{1/e(\E|\F)} = \mathcal{P}_{\E} = \mathfrak{P}_{\E}^{0+}$.~We put $\U_{\E} = \U_{\E}^{0} = \mathcal{O}_{\E}^{\times}$, with filtration subgroups $\U_{\E}^{r} = 1 + \mathfrak{P}_{\E}^{r}$, for $r >0$, and $\U_{\E}^{r+} = 1 + \mathfrak{P}_{\E}^{r+}$, for $r \geq 0$. 

Let $\W_{\F}$ denote the Weil group of $\F$.~Implicit in the definition of the Weil group and local class field theory are compatible choices of isomorphisms $\E^{\times} \cong \W_{\E}^{\text{ab}}$, for each finite extension $\E/\F$, via which we will identify the quasi-characters of $\E^{\times}$ and of $\W_{\E}$.~Moreover, if $\F \subset \E \subset \K$ and $\xi$ is a quasi-character of $\E^{\times}$, let $\xi_{\K} := \xi \circ N_{\K/\E}$ with $N_{\K/\E}$ denoting the field norm map.~When $\xi$ is viewed as a quasi-character of $\W_{\E}$, the quasi-character $\xi_{\K}$ is simply the restriction of $\xi$ to the subgroup $\W_{\K}$.~For $\E/\F$ a finite extension, we write 
\[
\Ind_{\E/\F} = \Ind_{\W_{\E}}^{\W_{\F}}
\]
for the induction functor.

\subsection{Admissible pairs}
In our current situation, there is a nice parametrization of the irreducible $n$-dimensional representations of $\W_{\F}$ in terms of admissible quasi-characters, introduced by Howe \cite{Howe}.
\begin{definition}
    \normalfont
    An \textit{admissible pair of degree $n$} is a pair $(\E/\F, \, \xi)$ where $\E/\F$ is a (tamely ramified) degree $n$ extension and $\xi$ is a quasi-character of $\E^{\times}$ such that
    \begin{enumerate}
        \item $\xi$ does not come via the norm from a proper subfield of $\E$ containing $\F$; and
        \item if the restriction $\xi \, \big |_{\, \U_{\E}^{0+}}$ comes via the norm from a subfield $\F \subset \text{L} \subset \E$, then $\E/\text{L}$ is unramified.
    \end{enumerate}
    Two admissible pairs $(\E_{1}/\F, \xi_{1})$ and $(\E_{2}/\F, \xi_{2})$ are said to be conjugate if there is an $\F$-isomorphism from $\E_{1}$ to $\E_{2}$ which takes $\xi_{1}$ to $\xi_{2}$.
\end{definition}

In this situation we also say that $\xi$ is an \textit{admissible quasi-character} of $\E^{\times}$ (relative to $\F$).~For any such admissible quasi-character, we write
\[
\rho_{\xi} := \Ind_{\E/\F}\xi.
\]
The following theorem is from \cite[Theorem 2.2.2]{Moy}.
\begin{theorem}\label{TheoremGalois}
    If $(\E/\F, \, \xi)$ is an admissible pair of degree $n$, then $\rho_{\xi}$ is an irreducible $n$-dimensional representation of $\W_{\F}$.~Furthermore, two admissible pairs induce equivalent representations if and only if they are conjugate, and each irreducible $n$-dimensional representation of $\W_{\F}$ is induced from an admissible pair.
\end{theorem}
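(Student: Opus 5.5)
This is the theorem of Moy (Howe's parametrization by admissible pairs), which the paper quotes from \cite[Theorem 2.2.2]{Moy}. The plan is the standard argument via Mackey theory and Clifford theory, using the tameness of $\E/\F$ throughout. It has three parts: irreducibility, the criterion for equivalence, and surjectivity.

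\emph{Irreducibility.} By Mackey's criterion, $\rho_\xi=\Ind_{\E/\F}\xi$ is irreducible if and only if, for every $g\in\W_\F\setminus\W_\E$, the characters $\xi$ and $\xi^g$ disagree on $\W_\E\cap g\W_\E g^{-1}$.

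First I reduce to the Galois closure. Since $\E/\F$ is tame, I pass to a finite tame Galois $\K/\F$ containing $\E$ and compare the restrictions $\xi_\K$ and $(\xi^g)_\K$. The task is then to show: if some $g$ gives an intertwining, then $\xi$ factors through the norm from the subfield $\E\cap g\E$, or $\xi|_{\U^{0+}_\E}$ does.

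I would argue this by descending the jump filtration of $\xi$. The restriction of $\xi$ to the successive quotients $\U_\E^r/\U_\E^{r+}$ is given by elements $\psi_\E(c_r x)$. Intertwining forces the $c_r$ to be fixed by $g$ at each jump. Using tameness, the fixed field of the relevant stabilizer gives a subfield $\text{L}\subsetneq\E$ through whose norm the wild part of $\xi$ factors. If $\E/\text{L}$ is ramified, this contradicts condition (2). If $\E/\text{L}$ is unramified, then condition (1) lets one handle the tame part: $\xi$ is then determined on $\langle\varpi_\E\rangle\mu_\E'$, and an unramified Galois-descent argument applies. This step is the main obstacle. The bookkeeping of the filtration jumps, and the proof that the fields generated by the $c_r$ behave as claimed, is the technical heart of the argument.

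\emph{Equivalence.} Conjugate pairs obviously give equivalent representations. For the converse, suppose $\rho_{\xi_1}\cong\rho_{\xi_2}$. Frobenius reciprocity plus Mackey's formula show that $\xi_2$ occurs in the restriction of $\rho_{\xi_1}$ to $\W_{\E_2}$. Hence for some $g$ the characters $\xi_1^g$ and $\xi_2$ agree on $\W_{g\E_1}\cap\W_{\E_2}$. Running the same filtration argument, now with two fields, shows that $g\E_1=\E_2$ and $\xi_1^g=\xi_2$.

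\emph{Surjectivity.} Let $\sigma$ be an irreducible $n$-dimensional representation of $\W_\F$. The wild inertia subgroup $\mathcal P_\F$ is pro-$p$ and $p\nmid n$, so $\sigma|_{\mathcal P_\F}$ is a sum of characters. Choose one such character $\theta$ and let $\W_{\text{L}}$ be its stabilizer. Clifford theory gives $\sigma=\Ind_{\text{L}/\F}\sigma'$ with $\sigma'|_{\mathcal P}$ isotypic. Since $\W_{\text{L}}/\mathcal P$ is metabelian, and using tameness together with the fact that $\theta$ extends, one shows that $\sigma'$ is induced from a character of $\W_\E$ for some $\E\supset\text{L}$. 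The stabilizer construction gives condition (2) directly, and the irreducibility of $\sigma$ gives condition (1), by the Mackey criterion run in reverse.
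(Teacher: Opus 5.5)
The paper does not prove this statement; it quotes it from Moy (Howe's parametrization), so your proposal has to stand on its own. Your surjectivity sketch is essentially the standard argument: Clifford theory for a character $\theta$ of wild inertia, extension of $\theta$ to its stabilizer $\W_{\L}$, and monomiality of the tame quotient. Your derivation of (1) and (2) there is sound.

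The genuine gap is in the irreducibility step, and therefore also in the equivalence step, which reruns ``the same filtration argument''.

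First, your reduction is wrong as stated. Suppose $g\notin\W_{\E}$ normalizes $\W_{\E}$; for example, take $\E/\F$ unramified of degree $n>1$ and $g$ a Frobenius element. Then $\E\cap g\E=\E$, so ``$\xi$ factors through the norm from $\E\cap g\E$'' is automatic and contradicts nothing. The field you actually need is the fixed field of the automorphism of $\E$ induced by $g$. Passing from $g$-invariance of $\xi$ to factorization through a norm is Hilbert~90, which you never use.

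Second, ``intertwining forces the $c_r$ to be fixed by $g$'' does not make sense when $g\E\neq\E$, since $g(c_r)\in g\E$. It could only be a congruence in $\E\cdot g\E$. Two things would then remain: turning such congruences into a proper subfield through whose norm the wild part of $\xi$ factors, \emph{and} showing that $g$ lies in the corresponding Weil group so that your unramified descent applies. That is exactly Howe's genericity machinery, which you leave as ``the technical heart''. So the first two assertions of the theorem are not actually proved.

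The missing idea is that no jump bookkeeping is needed: reuse your surjectivity device.

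\emph{Setup.} Write $P_{\F}$ for wild inertia (your $\mathcal P_{\F}$). Since $\E/\F$ is tame, $P_{\F}\subseteq\W_{\E}$. Put $\theta=\xi|_{P_{\F}}$ and let $\W_{\L}$ be its stabilizer; $\L\subseteq\E$ because $\theta$ is $\W_{\E}$-invariant. Extend $\theta$ to $\tilde\theta$ on $\W_{\L}$. Then $\xi\,\tilde\theta_{\E}^{-1}$ is trivial on $P_{\F}$, i.e.\ on $\U_{\E}^{0+}$ under class field theory. So $\xi|_{\U_{\E}^{0+}}$ comes via the norm from $\L$, and (2) forces $\E/\L$ to be unramified.

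\emph{Irreducibility.} Suppose $g\notin\W_{\E}$ satisfies Mackey's condition. Restrict it to $P_{\F}\subseteq\W_{\E\cdot g\E}$ (a tame field) to get ${}^{g}\theta=\theta$, so $g\in\W_{\L}$. Hence $g\E=\E$, and $g$ induces some $1\neq\gamma\in\Gal(\E/\L)$ fixing both $\xi$ and $\tilde\theta_{\E}$. Hilbert~90 for the cyclic extension $\E/\E^{\langle\gamma\rangle}$ shows that $\xi\,\tilde\theta_{\E}^{-1}$, and hence $\xi$, factors through $N_{\E/\E^{\langle\gamma\rangle}}$. This contradicts (1).

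\emph{Equivalence.} If $\rho_{\xi_1}\cong\rho_{\xi_2}$, then $\theta_1$ and $\theta_2$ are $\W_{\F}$-conjugate. After conjugating one pair, both $\Ind_{\E_i/\L}\xi_i$ are the $\theta$-isotypic part of $\rho|_{\W_{\L}}$. The $\E_i$ are unramified over $\L$ of the same degree, so they coincide. Mackey, with $\W_{\E}$ normal in $\W_{\L}$, then gives $\xi_2={}^{\gamma}\xi_1$ for some $\gamma\in\Gal(\E/\L)$.
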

Let $\E/\F$ be a degree $n$ extension, and $\xi$ a \textit{ramified quasi-character} of $\E^{\times}$; that is, $\xi$ is non-trivial on $\U_{\E}$.~Let $d(\xi) \in \frac{1}{e(\E|\F)}\mathbb{Z}$ be the \textit{normalized depth} of $\xi$; that is, $\xi$ is trivial on $\U_{\E}^{d(\xi)+}$ but not on $\U_{\E}^{d(\xi)}$.~There is then $c_{\xi}\in \mathfrak{P}_{\E}^{-d(\xi)}$, well-defined modulo $\mathfrak{P}_{\E}^{-d(\xi)/2}$ such that 
\[
\xi(1+x) = \psi_{\E}(c_{\xi}x)
\]
for $x \in \mathfrak{P}_{\E}^{d(\xi)/2+}$.~We say that $c_{\xi}$ \textit{represents} $\xi$. 

\subsection{Local factors}\label{LocalFactors}
As in the preceding subsections, let $\E/\F$ be a degree $n$ extension.~Associated with any quasi-character $\xi$ of $\E^{\times}$ are local factors $L(s, \, \xi)$, $\epsilon(s, \, \xi, \, \psi_{\E})$, and $\gamma(s, \, \xi, \, \psi_{\E})$, by Tate’s thesis \cite{Tate} (see also the account in \cite[Section 23]{BH-7}).~When $\xi$ is ramified we have $L(s, \xi) = 1$ and $\epsilon(s, \, \xi, \, \psi_{\E}) = \gamma(s, \, \xi, \, \psi_{\E})$.~Denote by $\epsilon(\xi, \, \psi_{\E})$ the value of the $\epsilon$-factor at $s= 0$.~Note that $\epsilon(s, \, \xi, \, \psi_{\E}) = \epsilon(\xi|\cdot|^{ns}, \, \psi_{\E})$; the extra factor $n$ appears here because of our normalization of the absolute value.~We have the following proposition from \cite[(2.3.17)]{Moy}.
\begin{proposition}\label{PropositionGalois}
    For $\xi$ a ramified quasi-character of $\E^{\times}$ of depth $d(\xi)$ with $e(\E \, | \, \F) \, d(\xi)$ odd, we have
    \[
    \epsilon(\xi, \, \psi_{\E}) = \xi^{-1}(c_{\xi}) \, \psi_{\E}(c_{\xi}) \, |c_{\xi}|^{n/2}. 
    \]
\end{proposition}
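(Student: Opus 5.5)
The plan is to compute $\epsilon(\xi,\psi_{\E})$ directly from Tate's integral formula as a Gauss sum, and then to evaluate that Gauss sum by a stationary-phase argument. Parity enters only at one point: because $m := e(\E\,|\,\F)\,d(\xi)$ is odd, the stationary-phase step isolates a single coset and leaves no quadratic Gauss sum behind.

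\textbf{Setting up the integral.} In $\E$-normalized terms, $\xi$ is trivial on $1+\mathcal{P}_{\E}^{m+1}$ but not on $1+\mathcal{P}_{\E}^{m}$, so its conductor exponent is $m+1$. Since $\E/\F$ is tame and $\psi_{\F}$ has level one, $\psi_{\E}=\psi_{\F}\circ\tr_{\E/\F}$ is trivial on $\mathcal{P}_{\E}$ but not on $\mathcal{O}_{\E}$. Tate's local functional equation (as in \cite[Section 23]{BH-7}), with the self-dual measure $dx$ on $\E$ relative to $\psi_{\E}$, gives
\[
\epsilon(\xi,\psi_{\E}) = \int_{c_\xi\,\U_{\E}} \xi^{-1}(x)\,\psi_{\E}(x)\,dx ,
\]
where $c_\xi\,\U_{\E}$ is the set of elements of $\E$-valuation $-m$. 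I will first check, with unramified twists and the trivial case in mind, that the domain is indeed the shell of valuation $-(m+1)+1=-m$ for $\psi_{\E}$ of level one, and that no extra power of $q$ appears.

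\textbf{Coset decomposition.} Substitute $x=c_\xi u$ with $u\in\U_{\E}$; this contributes the Jacobian factor $|c_\xi|^{n}$ in the paper's normalization of absolute values. Since $m$ is odd, $r:=(m+1)/2$ is an integer, and $\mathcal{P}_{\E}^{r}$ is exactly the $\E$-normalized version of $\mathfrak{P}_{\E}^{d(\xi)/2+}$. On this ideal, $\xi(1+y)=\psi_{\E}(c_\xi y)$. Decompose $\U_{\E}$ into cosets $u(1+\mathcal{P}_{\E}^{r})$. On the coset through $u$, using that $\xi^{-1}$ is multiplicative, the integral becomes
\[
\xi^{-1}(c_\xi u)\,\psi_{\E}(c_\xi u)\int_{\mathcal{P}_{\E}^{r}} \psi_{\E}\bigl(c_\xi(u-1)y\bigr)\,dy .
\]
The inner integral is nonzero exactly when $c_\xi(u-1)\mathcal{P}_{\E}^{r}\subseteq\mathcal{P}_{\E}$. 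In valuations this reads $-m+r+v_{\E}(u-1)\ge 1$, that is, $v_{\E}(u-1)\ge r$, so only the trivial coset $u\in 1+\mathcal{P}_{\E}^{r}$ survives. When $m$ is even the middle layer would instead give a quadratic Gauss sum, and the odd-depth hypothesis is precisely what rules this out.

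\textbf{Normalization.} The surviving contribution is $\xi^{-1}(c_\xi)\,\psi_{\E}(c_\xi)\,|c_\xi|^{n}\,\vol(\mathcal{P}_{\E}^{r})$. With the self-dual measure for a level-one character, $\vol(\mathcal{P}_{\E}^{r})=q_{\E}^{1/2-r}=q_{\E}^{-m/2}$. Writing $|c_\xi|^{n}=q_{\E}^{m}$, the total power is $q_{\E}^{m/2}=|c_\xi|^{n/2}$, which gives the claimed formula. Finally, $c_\xi$ is well-defined only modulo $\mathfrak{P}_{\E}^{-d(\xi)/2}$, so I will check that the right-hand side is independent of this choice. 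If $c_\xi$ is replaced by $c_\xi(1+t)$ with $t\in\mathcal{P}_{\E}^{r}$, the change in $\xi^{-1}$ is $\psi_{\E}(-c_\xi t)$, which cancels the change in $\psi_{\E}(c_\xi)$.

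\textbf{Main obstacle.} I expect the hardest part to be the bookkeeping of normalizations rather than the stationary phase itself. This means tracking the self-dual measure for $\psi_{\E}$ of level one, the convention $\epsilon(s,\xi,\psi_{\E})=\epsilon(\xi|\cdot|^{ns},\psi_{\E})$ in which $|\cdot|^{n}$ is the normalized absolute value of $\E$, and whether Tate's formula integrates $\xi^{-1}$ over the shell of valuation $-m$ or $\xi$ over a shell of positive valuation. I would fix these conventions by testing the formula against a level-one character, where the answer is a classical Gauss sum.
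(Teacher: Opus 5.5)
Your argument is correct. Tate's formula $\epsilon(\xi,\psi_{\E})=\int_{c_\xi \U_{\E}}\xi^{-1}(x)\psi_{\E}(x)\,dx$ uses the self-dual measure for the level-one character $\psi_{\E}$, which is where tameness enters. Combined with the coset decomposition modulo $1+\mathcal{P}_{\E}^{(m+1)/2}$, it gives exactly $\xi^{-1}(c_\xi)\psi_{\E}(c_\xi)\,q_{\E}^{m}\,q_{\E}^{-m/2}=\xi^{-1}(c_\xi)\psi_{\E}(c_\xi)|c_\xi|^{n/2}$. The paper does not prove this itself but quotes it from \cite[(2.3.17)]{Moy}, and your stationary-phase evaluation of the Gauss sum is the standard argument behind that citation (cf.\ \cite[Section 23]{BH-7}).
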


We recall from \cite[Subsection 29.4]{BH-7}:
\begin{equation}\label{LanglandsConstant}
    \frac{\epsilon \left(s, \, \Ind_{\E/\F}\left(\mathbbm{1}_{\E^{\times}}\right), \, \psi_{\F}\right)}{\epsilon \left(s, \, \mathbbm{1}_{\E^{\times}}, \, \psi_{\E} \right)} = \lambda_{\E/\F}(\psi_{\F})
\end{equation}
where $\mathbbm{1}_{\E^{\times}}$ is the trivial quasi-character of $\W_{\E}$, and $\lambda_{\E/\F}(\psi_{\F})$ is the \textit{Langlands constant} (for more on the Langlands constant, see \cite[Subsection 30.4]{BH-7}).~Let $\F \subset \text{L} \subset \E$.~Then 
\begin{equation}\label{LanglandsConstantTower}
    \lambda_{\E/\F}\left(\psi_{\F}\right) = \lambda_{\E/\text{L}}\left(\psi_{\L}\right) \lambda_{\text{L}/\F}\left(\psi_{\F}\right)^{m}
\end{equation}
where $\left[\E : \text{L}\right] = m$ \cite[(2.4.4)]{Moy}.~Furthermore, if $\E/\F$ is a degree $n$ unramified extension, then from \cite[(2.5.3)]{Moy}, we have
\begin{equation}\label{LanglandsConstantUnramified}
    \lambda_{\E/\F}\left(\psi_{\F}\right) = (-1)^{n-1}.
\end{equation}

Next, we define the quasi-character $\varkappa_{\E/\F}$ of $\W_{\F}$ (which we also identify as a quasi-character of $\F^{\times}$ via local class field theory) by
\begin{equation}\label{eqn:varkappa}
\varkappa_{\E/\F} = \det\left(\Ind_{\E/\F}\mathbbm{1}_{\E^{\times}}\right). 
\end{equation}
We further note that $\varkappa_{\E/\F}$ has order dividing two \cite[Subsection 29.2]{BH-7}.

Lastly, we recall the following result \cite[Lemmas 2.4, 2.5]{ALST}.
\begin{lemma}\label{LemmaGalois}
Let $\E$ and {\normalfont $\text{L}$} be tamely ramified field extensions of $\F$, and let
$\chi$ and $\eta$ be quasi-characters of \, $\E^\times$ and \, {\normalfont$\text{L}^\times$},
respectively.~For $g\in \W_{\F}$, write $\E_g=g(\E)$ and
{\normalfont $\K_g=\E_g\text{L}$}, and set
\[
\theta_g := ({}^{g}\chi)_{ \, \K_g}\eta_{ \, \K_g}.
\]
Then {\normalfont
\[
\rho_\chi\otimes \rho_\eta
\cong
\bigoplus_{g \, \in \, \smalldoublecoset{\W_{\text{L}}}{\W_{\F}}{\W_{\E}}}
\Ind_{\K_g/\F}\theta_g
\]
}
and
{\normalfont
\[
\gamma(s, \, \rho_\chi\otimes \rho_\eta, \, \psi_{\F})
=
\prod_{g \, \in \, \smalldoublecoset{\W_{\text{L}}}{\W_{\F}}{\W_{\E}}}
\lambda_{\K_g/\F}(\psi_{\F}) \, \gamma(s, \, \theta_g, \, \psi_{\K_g}).
\]
}
\end{lemma}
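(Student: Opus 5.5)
The plan is to prove the decomposition of $\rho_\chi\otimes\rho_\eta$ by Mackey theory, and then deduce the gamma factor formula from additivity of $\gamma$ together with inductivity of local constants in degree zero.

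First, by the projection formula (Frobenius reciprocity for tensor products),
\[
\rho_\chi\otimes\rho_\eta = \Ind_{\W_{\E}}^{\W_{\F}}\chi\otimes \Ind_{\W_{\text{L}}}^{\W_{\F}}\eta \cong \Ind_{\W_{\text{L}}}^{\W_{\F}}\Bigl(\eta\otimes \Res^{\W_{\F}}_{\W_{\text{L}}}\Ind_{\W_{\E}}^{\W_{\F}}\chi\Bigr).
\]
Since $\W_{\E}$ has finite index, Mackey's restriction formula applies. It gives
\[
\Res_{\W_{\text{L}}}\Ind_{\W_{\E}}^{\W_{\F}}\chi \cong \bigoplus_{g\in \smalldoublecoset{\W_{\text{L}}}{\W_{\F}}{\W_{\E}}}\Ind_{\W_{\text{L}}\cap g\W_{\E}g^{-1}}^{\W_{\text{L}}}\, {}^{g}\chi .
\]
Here ${}^g\chi(x)=\chi(g^{-1}xg)$ is a character of $g\W_{\E}g^{-1}=\W_{\E_g}$. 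By Galois theory (inside $\overline{\F}$), $\W_{\text{L}}\cap\W_{\E_g}=\W_{\E_g\text{L}}=\W_{\K_g}$.

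Applying the projection formula again inside $\W_{\text{L}}$, each summand becomes
\[
\eta\otimes\Ind_{\W_{\K_g}}^{\W_{\text{L}}}\bigl({}^g\chi|_{\W_{\K_g}}\bigr)\cong\Ind_{\W_{\K_g}}^{\W_{\text{L}}}\bigl(\eta|_{\W_{\K_g}}\cdot{}^g\chi|_{\W_{\K_g}}\bigr).
\]
Transitivity of induction then gives $\Ind_{\K_g/\F}$ of this character. Under the compatible class field theory isomorphisms, restriction from $\W_{\E_g}$ (resp. $\W_{\text{L}}$) to $\W_{\K_g}$ corresponds to composing with the norm. Hence the character is $({}^g\chi)_{\K_g}\eta_{\K_g}=\theta_g$, which proves the first assertion.

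For the gamma factor, $\gamma(s,\cdot,\psi_{\F})$ is multiplicative in direct sums, since both the $L$- and $\epsilon$-factors are additive. So it suffices to show, for a finite separable $\K/\F$ and a quasi-character $\theta$ of $\K^\times$, that
\[
\gamma(s,\Ind_{\K/\F}\theta,\psi_{\F})=\lambda_{\K/\F}(\psi_{\F})\,\gamma(s,\theta,\psi_{\K}).
\]
Write $\gamma=\epsilon\cdot L(1-s,\check{\ }\,)/L(s,\cdot)$.

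The $L$-factors are inductive (Artin). Also, $\Ind$ commutes with taking contragredients, so the ratio of $L$-factors is unchanged by induction.

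For the $\epsilon$-factors, the key input is the Langlands--Deligne theorem that $\epsilon$ is inductive in degree zero, with $\psi_{\K}=\psi_{\F}\circ\tr_{\K/\F}$. Apply it to the virtual representation $\theta-\mathbbm{1}_{\K^\times}$ of degree zero. This gives
\[
\frac{\epsilon(s,\Ind_{\K/\F}\theta,\psi_{\F})}{\epsilon(s,\theta,\psi_{\K})}=\frac{\epsilon(s,\Ind_{\K/\F}\mathbbm{1}_{\K^\times},\psi_{\F})}{\epsilon(s,\mathbbm{1}_{\K^\times},\psi_{\K})},
\]
which equals $\lambda_{\K/\F}(\psi_{\F})$ by \eqref{LanglandsConstant}. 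Taking the product over $g$ gives the stated formula.

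The main point requiring care is not deep but bookkeeping. One must check that the additive characters match, i.e.\ that $\psi_{\K_g}$ as defined in the paper is $\psi_{\F}\circ\tr_{\K_g/\F}$, so that the degree-zero inductivity applies verbatim. One must also check that the double coset representatives and conjugated characters ${}^g\chi$ are consistent with the identification $\W_{\E_g}=g\W_{\E}g^{-1}$. Tameness plays no role in this argument beyond the setting.
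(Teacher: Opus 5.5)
Your proof is correct: the projection formula plus Mackey restriction (with $g\W_{\E}g^{-1}\cap\W_{\L}=\W_{\K_g}$, and restriction to $\W_{\K_g}$ corresponding to composition with the norm) gives the decomposition, and additivity of $\gamma$ together with degree-zero inductivity of $\epsilon$-factors for $\theta-\mathbbm{1}_{\K^\times}$ produces exactly the ratio recorded in \eqref{LanglandsConstant}. The paper gives no argument of its own for this lemma and simply cites \cite[Lemmas 2.4, 2.5]{ALST}; your route is the standard proof of that cited result.
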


\subsection{Twisting by tamely ramified quasi-characters}

Throughout the rest of this section, let $\Phi_{(f, \, \chi, \, \zeta)}$ be the Langlands parameter corresponding to a middle supercuspidal representation $\pi_{(f, \, \chi, \, \zeta)}$ of $\GL(2n, \F)$.~Then
\[
\Phi_{(f, \, \chi, \, \zeta)} = \Ind_{\E_{f}/\F}\xi_{(f, \, \chi, \, \zeta)}
\]
where $\left(\E_{f}/\F, \, \xi_{(f, \, \chi, \, \zeta)}\right)$ is an admissible pair of degree $2n$ \cite[Parametrization Theorem, p.~2]{BH-6}.

The following proposition describes the restriction of $\xi_{(f, \, \chi, \, \zeta)}$ to $1+\mathcal{P}_{\E_f}$, and this restriction in turn determines the representing element of $\xi_{(f, \, \chi, \, \zeta)}$.~We note that~$\E_{f}$ is naturally contained in~$\Mat(2n,\F)$, by definition. 
\begin{proposition}\label{1units}
    On $1 + \mathcal{P}_{\E_{f}}$, the quasi-character $\xi_{(f, \, \chi, \, \zeta)}$ satisfies: 
\[
   \xi_{(f, \, \chi, \, \zeta)} \, |_{1+\mathcal{P}_{\E_{f}}}
   = \psi_{\beta_{f}}|_{1+\mathcal{P}_{\E_{f}}}.
\]
\end{proposition}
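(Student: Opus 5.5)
We plan to deduce this from the explicit tame correspondence of Bushnell--Henniart, which describes how an admissible pair produces a maximal simple type. Since $\Phi_{(f,\chi,\zeta)} = \Ind_{\E_f/\F}\xi$ with $(\E_f/\F,\xi)$ admissible, the Parametrization Theorem of \cite{BH-6} gives the corresponding supercuspidal as $\ind \Lambda_\xi$. Here $\Lambda_\xi$ is an extended maximal simple type attached to $\xi$ by Howe's construction, twisted by a tamely ramified character $\Delta_\xi$ of $\E_f^\times$. The key structural fact we will use is that $\Delta_\xi$ is trivial on $\U^{1}_{\E_f} = 1+\mathcal{P}_{\E_f}$. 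It only modifies $\xi$ on $\mu'$ and on a uniformizer, because the rectifier is tamely ramified.

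On the wild part, Howe's construction works as follows. Since $\xi$ has depth $\frac1n$ and $e(\E_f|\F)=n$, we have $d(\xi)\cdot e = 1$. So $\xi|_{1+\mathcal{P}_{\E_f}}$ is represented by an element $c_\xi\in\mathfrak{P}_{\E_f}^{-1/n}$, well defined modulo $\mathfrak{P}_{\E_f}^{0}$. The associated simple character is the character $\psi_{c_\xi}(y)=\psi_\F\circ\tr(c_\xi(y-1))$ of $\U^1(\mathfrak{A})$, where $\mathfrak{A}$ is the hereditary order normalized by $\E_f^\times$. By the uniqueness part of the parametrization, this simple character must be $\mathbf{J}$-conjugate to $\psi_{\beta_f}$. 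So we may identify the embedding $\E_f=\F[\beta_f]\subset \Mat(2n,\F)$ so that $c_\xi \equiv \beta_f$ modulo $\mathfrak{A}\cap\E_f=\mathcal{O}_{\E_f}$. To justify this we will use intertwining: two simple characters $\psi_\beta,\psi_{\beta'}$ of the same $\U^1(\mathfrak{A})$ with $\beta,\beta'$ minimal of normalized valuation $-1$ coincide iff $\beta\equiv\beta'$ modulo $\mathfrak{A}$. Conjugation by $\mathbf{J}_f$ preserves this, and the Galois conjugates of $\E_f$ over $\F$ realize the remaining ambiguity.

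It then remains to check the identity pointwise. For $x\in\mathcal{P}_{\E_f}$ we have $1+x\in \U^1(\mathfrak{A}_{2n})\cap\E_f$. The character $\psi_{\beta_f}(1+x)=\psi_\F(\tr_{\Mat}(\beta_f x))$ agrees with $\psi_\F(\tr_{\E_f/\F}(\beta_f x))$, since the matrix trace restricted to $\E_f\subset\Mat(2n,\F)$ equals the field trace, $\E_f$ being a maximal subfield. Moreover, $\xi(1+x)=\psi_{\E_f}(c_\xi x)$ holds for all $x\in\mathcal{P}_{\E_f}$, not merely $x\in\mathfrak{P}^{d/2+}$. Indeed, $c_\xi x$ and $\beta_f x$ differ by $\mathcal{O}_{\E_f}\cdot\mathcal{P}_{\E_f}$, whose trace lies in $\mathcal{P}_\F$, where $\psi_\F$ is trivial. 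Also $1+\mathcal{P}_{\E_f}$ modulo $1+\mathcal{P}^2_{\E_f}$ is additive, and $\xi$ is trivial on $1+\mathcal{P}_{\E_f}^2$ by the depth condition. Together these give the equality.

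The main obstacle is the middle step: pinning down that the simple character associated with $\xi$ equals $\psi_{\beta_f}$ on the nose rather than only up to conjugacy. The question is whether the conjugating element preserves $\E_f$ pointwise. We would first try to argue that the normalizer of $\psi_{\beta_f}$ in the relevant group is $\mathbf{J}_f$, which centralizes $\E_f$ modulo $\U^1$. Conjugation by $\mathbf{J}_f$ then does not change the restriction to $1+\mathcal{P}_{\E_f}$. If that fails, we would instead fix the $\F$-isomorphism of admissible pairs (conjugacy in Theorem \ref{TheoremGalois}) to absorb the ambiguity.
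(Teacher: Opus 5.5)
Your overall strategy is the paper's. Both arguments rest on the Bushnell--Henniart tame parametrization, in which the rectifier is tamely ramified, so $\xi|_{1+\mathcal{P}_{\E_f}}$ is read off from the simple character $\psi_{\beta_f}$. The difference is that the paper does not prove the identification step. It quotes \cite[Proposition 2.3]{BH-4} and the discussion after \cite[Theorem 1.3]{BH-4}: for a tame simple character, restricting it to $1+\mathcal{P}_{\E}$ for a parameter field $\E$ gives an admissible $1$-pair that is well defined up to conjugacy, and this is what the correspondence attaches to the endo-class.

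You try to prove this step by hand, and as written it is a genuine gap. "Intertwining implies conjugacy" gives $\GL(2n,\F)$-conjugacy, not $\mathbf{J}_f$-conjugacy. So you get some $g$ with $\psi_{c'}=\psi_{\beta_f}$, where $c'=g\iota(c_\xi)g^{-1}$ lies in $\E'=g\iota(\E_f)g^{-1}$, and $\E'$ is in general a different subfield from $\F[\beta_f]$. The statement "$c_\xi\equiv\beta_f$ modulo $\mathcal{O}_{\E_f}$" has no content until you produce an $\F$-isomorphism $\phi\colon\E'\to\F[\beta_f]$ with $\phi(c')-\beta_f\in\mathcal{O}_{\E_f}$. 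Neither suggested fix supplies one:
\begin{itemize}
\item The normalizer argument is circular. Suppose $h$ realises an $\F$-isomorphism $j\colon\E'\to\F[\beta_f]$ by conjugation (Skolem--Noether). Then $h$ normalizes $\mathfrak{A}_{2n}$, and $h$ normalizes $\psi_{\beta_f}$ if and only if $j(c')\equiv\beta_f\pmod{\mathcal{O}_{\E_f}}$, which is the claim itself.
\item Automorphisms of $\E_f$ cannot absorb the ambiguity by symmetry alone. $\E_f/\F$ is in general not Galois, and it has an automorphism inducing the non-trivial automorphism of $\L_f/\F$ only when $\overline{\sigma_f}^{\,q_{\F}-1}$ is an $n$-th power in $k_{\L_f}^{\times}$. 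You have to use which root of $f$ is congruent to $\varpi_{\F}c'^{n}$.
\end{itemize}

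The gap closes either by citing \cite[Proposition 2.3]{BH-4} as the paper does, or directly as follows.
\begin{itemize}
\item Since $g$ carries the order attached to $\iota$ onto $\mathfrak{A}_{2n}$, $\E'^{\times}$ normalizes $\mathfrak{A}_{2n}$. With $c'-\beta_f\in\mathfrak{A}_{2n}$ this gives $c'^{n}-\beta_f^{n}\in\mathfrak{P}_{2n}^{1-n}=\varpi_{\F}^{-1}\mathfrak{P}_{2n}$.
\item Hence the unit $y':=\varpi_{\F}c'^{n}$ of $\E'$ satisfies $y'\equiv\sigma_f\pmod{\mathfrak{P}_{2n}}$, so $f(y')\in\mathfrak{P}_{2n}\cap\E'=\mathcal{P}_{\E'}$.
\item By Hensel's lemma there is a root $s'\in\E'$ of $f$ with $y'/s'\in1+\mathcal{P}_{\E'}$.
\item As $p\nmid n$, choose $w\in1+\mathcal{P}_{\E'}$ with $w^{n}=y'/s'$ and put $\pi':=wc'^{-1}$. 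Then $\pi'^{n}=\varpi_{\F}/s'$.
\item Now $\E'=\F[s',\pi']$ by a degree count, and $X^{n}-\varpi_{\F}/s'$ is Eisenstein over $\F[s']$. So the $\F$-isomorphism $s'\mapsto\sigma_f$ extends to $\phi\colon\E'\to\F[\beta_f]$ with $\phi(\pi')=\beta_f^{-1}$, using $\beta_f^{-n}=\varpi_{\F}/\sigma_f$.
\item Then $\phi(c')=\phi(w)\beta_f\in\beta_f+\mathcal{O}_{\E_f}$.
\end{itemize}
Replace $\xi$ by its transport along $\phi\circ\mathrm{Ad}(g)\circ\iota$. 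This is a conjugate admissible pair, so $\Phi_{(f,\chi,\zeta)}$ is unchanged, and your final paragraph then goes through as written.
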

\begin{proof}
    We first consider the \textit{admissible 1-pair} $\left(\E_{f}/\F, \, \xi_{(f, \, \chi, \, \zeta)} \big|_{1+ \mathcal{P}_{\E_{f}}}\right)$ as defined in \cite[Subsection 1.3]{BH-4}.~Associated with $\pi_{(f, \, \chi, \, \zeta)}$ is the simple character $\psi_{\beta_{f}}$ whose \textit{endo-class} we denote by $\Theta_{\beta_{f}}$ (for an in-depth definition of the endo-class of a simple character, see \cite{BH-2}).~Since $p \nmid 2n$, we have that $\Theta_{\beta_{f}}$ is \textit{tamely ramified} \cite{BH-2}.~Hence, the discussion following \cite[Theorem 1.3]{BH-4} and \cite[Proposition 2.3]{BH-4} imply Proposition \ref{1units}.
\end{proof}

Next, let $\nu$ be a tamely ramified quasi-character of $\F^{\times}$.~We note that
\[
\Ind_{\E_{f}/\F}(\xi_{(f, \, \chi, \, \zeta)}) \otimes \nu = \Ind_{\E_{f}/\F}(\xi_{(f, \, \chi, \, \zeta)} \otimes \nu_{\E_{f}})
\]
where $\nu_{\E_{f}} = \nu \circ N_{\E_{f}/\F}$.~Since $\xi_{(f, \, \chi, \, \zeta)} \otimes \nu_{\E_{f}}$ is ramified, we have that
\begin{equation}\label{Equation1}
        \gamma(s, \, \Phi_{(f, \, \chi, \, \zeta)} \otimes \nu, \, \psi_{\F}) = \gamma(s, \, \Ind_{\E_{f}/\F}(\xi_{(f, \, \chi, \, \zeta)} \otimes \nu_{\E_{f}}), \, \psi_{\F}).
\end{equation}
This gives us the following proposition.
\begin{proposition}\label{PropositionGaloisTamelyRamified}
$\gamma\left(s, \, \Phi_{(f, \, \chi, \, \zeta)} \otimes \nu, \, \psi_{\F} \right) 
        = \lambda_{\E_{f}/\F}(\psi_{\F}) \, \xi_{(f, \, \chi, \, \zeta)}\left(\beta_{f}^{-1}\right)  \nu(-c^{-1}\varpi_{\F}^{2}) \, q_{\F}^{1-2s}$.
\end{proposition}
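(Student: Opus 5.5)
Starting from \eqref{Equation1}, I will apply the definition of the Langlands constant in \eqref{LanglandsConstant}, extended to arbitrary quasi-characters: for a degree zero virtual representation, $\epsilon$ is inductive. This gives
\[
\gamma\bigl(s,\Ind_{\E_f/\F}(\xi\nu_{\E_f}),\psi_{\F}\bigr)=\lambda_{\E_f/\F}(\psi_{\F})\,\epsilon\bigl(s,\xi\nu_{\E_f},\psi_{\E_f}\bigr),
\]
where $\xi:=\xi_{(f,\chi,\zeta)}$ and we use that $L$-factors are trivial because the character is ramified. The problem then reduces to a Tate epsilon factor of the single quasi-character $\xi\nu_{\E_f}|\cdot|^{2ns}$ of $\E_f^\times$, which I will evaluate with Proposition~\ref{PropositionGalois}.

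First I determine the depth and a representing element. By Proposition~\ref{1units}, on $1+\mathcal{P}_{\E_f}$ we have $\xi(1+x)=\psi_{\F}(\tr(\beta_f x))=\psi_{\E_f}(\beta_f x)$, since the matrix trace on $\E_f\subset\Mat(2n,\F)$ is $\tr_{\E_f/\F}$. The element $\beta_f$ has $\E_f$-valuation $-1$, and $\psi_{\E_f}$ has level one (its conductor is $\mathcal{P}_{\E_f}$ because $\E_f/\F$ is tame). It follows that $\xi$ is trivial on $\U^{1/n+}_{\E_f}$ but not on $\U^{1/n}_{\E_f}$. So $d(\xi)=1/n$ and $e\,d(\xi)=1$ is odd.

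Next, $\nu_{\E_f}$ is trivial on $1+\mathcal{P}_{\E_f}$ because $\nu$ is tame and the norm maps $1$-units to $1$-units. Likewise $|\cdot|^{2ns}$ is trivial there. Hence $\xi\nu_{\E_f}|\cdot|^{2ns}$ has the same depth $1/n$ and is represented by $c=\beta_f$; the congruence condition only needs $x\in\mathfrak{P}^{1/(2n)+}=\mathcal{P}_{\E_f}$, which is exactly what we have.

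Applying Proposition~\ref{PropositionGalois} with $2n$ in place of $n$ gives
\[
\epsilon = (\xi\nu_{\E_f})^{-1}(\beta_f)\,|\beta_f|^{-2ns}\,\psi_{\E_f}(\beta_f)\,|\beta_f|^{n}.
\]
I evaluate each piece in turn.
\begin{itemize}
\item Since $\val_{\F}(\beta_f)=-1/n$ and $|\beta_f|=q_{\F}^{1/n}$, the powers combine to $q_{\F}^{1-2s}$.
\item The value $\psi_{\E_f}(\beta_f)=\psi_{\F}(\tr\beta_f)$ equals $1$: the characteristic polynomial \eqref{MinimalPolynomialMiddle} has no $X^{2n-1}$ term when $n\ge2$, so $\tr\beta_f=0$.
\item For the $\nu$ term, $\nu_{\E_f}(\beta_f)^{-1}=\nu(N_{\E_f/\F}\beta_f)^{-1}$, and $N_{\E_f/\F}(\beta_f)=m_{\beta_f}(0)=-c\varpi_{\F}^{-2}$ because the degree $2n$ is even. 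Hence $\nu_{\E_f}(\beta_f)^{-1}=\nu(-c^{-1}\varpi_{\F}^2)$.
\end{itemize}
Collecting these gives $\lambda_{\E_f/\F}(\psi_{\F})\,\xi(\beta_f^{-1})\,\nu(-c^{-1}\varpi_{\F}^2)\,q_{\F}^{1-2s}$, as claimed.

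The main obstacle I expect is normalization bookkeeping rather than any conceptual step. The points to check are:
\begin{itemize}
\item that the ratio in \eqref{LanglandsConstant} applies to $\xi\nu_{\E_f}$ and not only to the trivial character, which holds by inductivity in degree zero;
\item that the $s$-shift is $|\cdot|^{2ns}$ with this paper's normalization of $|\cdot|$;
\item that the exponent in Proposition~\ref{PropositionGalois} is the degree $[\E_f:\F]=2n$, so that $|c_\xi|^{n}$ appears;
\item that $\psi_{\E_f}$ genuinely has level one, so that the representing element is $\beta_f$ itself and not a scaled version.
\end{itemize}
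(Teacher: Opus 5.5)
Your proposal is correct and follows the paper's proof essentially step by step. You pass from $\gamma(s,\Ind_{\E_f/\F}(\xi\nu_{\E_f}),\psi_{\F})$ to $\lambda_{\E_f/\F}(\psi_{\F})\,\gamma(s,\xi\nu_{\E_f},\psi_{\E_f})$, identify $\beta_f$ as a representing element of depth $1/n$ via Proposition~\ref{1units}, and evaluate with Proposition~\ref{PropositionGalois} using $N_{\E_f/\F}(\beta_f)=-c\varpi_{\F}^{-2}$ and $|\beta_f|^{n-2ns}=q_{\F}^{1-2s}$. You also make explicit two points the paper leaves tacit: that $\psi_{\E_f}(\beta_f)=1$ because $\tr_{\E_f/\F}\beta_f=0$ for $n\ge 2$, and that \eqref{LanglandsConstant} extends from the trivial character to $\xi\nu_{\E_f}$ by inductivity in degree zero.
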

\begin{proof}
    From \eqref{LanglandsConstant} and \eqref{Equation1}, we have that 
    \begin{align*}
        \gamma(s, \, \Phi_{(f, \, \chi, \, \zeta)} \otimes \nu, \, \psi_{\F}) &= \gamma(s, \, \Ind_{\E_{f}/\F}(\xi_{(f, \, \chi, \, \zeta)} \otimes \nu_{\E_{f}}), \, \psi_{\F}) \\
        &= \lambda_{\E_{f}/\F}(\psi_{\F}) \, \gamma(s, \, \xi_{(f, \, \chi, \, \zeta)} \otimes \nu_{\E_{f}}, \, \psi_{\E_{f}}).
    \end{align*}   
Next, Proposition \ref{1units} tells us that the representing element $c_{\xi_{(f, \, \chi, \, \zeta)}}$ of $\xi_{(f, \, \chi, \, \zeta)}$ is $\beta_{f}$ and $\val_{\F}\left(\beta_{f}\right) = -\sfrac{1}{n}$.

Since $\e\left(\E_{f} \, | \, \F\right)d\left(\xi_{(f, \, \chi, \, \zeta)}\right) = 1$, we have from Proposition \ref{PropositionGalois} that
\begin{align*}
    \gamma(s, \, \xi_{(f, \, \chi, \, \zeta)} \otimes \nu_{\E_{f}}, \, \psi_{\E_{f}}) &= \xi_{(f, \, \chi, \, \zeta)}\left(\beta_{f}^{-1}\right)  \nu_{\E_{f}}\left(\beta_{f}^{-1}\right)  \left|\beta_{f}\right|^{n-2ns} \\
    &= \xi_{(f, \, \chi, \, \zeta)}\left(\beta_{f}^{-1}\right) \nu(-c^{-1}\varpi_{\F}^{2}) \, q_{\F}^{1-2s}.
\end{align*}
Proposition \ref{PropositionGaloisTamelyRamified} now follows. 
\end{proof}

When we match the twisted gamma factor in Proposition \ref{AutomorphicTamelyRamified} with the gamma factor in Proposition \ref{PropositionGaloisTamelyRamified} (equal by \cite{Harris,Henniart1}), we have the following corollary.
\begin{corollary}\label{UniformizerCorollary}
    The quasi-character $\xi_{(f, \, \chi, \, \zeta)}$ satisfies: 
    \[
    \xi_{(f, \, \chi, \, \zeta)}\left(\beta_{f}\right) = \zeta \cdot \lambda_{\E_{f}/F}(\psi_{\F}).
    \]
\end{corollary}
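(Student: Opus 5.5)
The plan is to compare the two formulas for the same gamma factor. By the local Langlands correspondence of Harris--Taylor and Henniart, for every tamely ramified quasi-character $\nu$ of $\F^\times$ we have
\[
\gamma\left(s, \, \pi_{(f, \, \chi, \, \zeta)} \times \nu, \, \psi_{\F}\right) = \gamma\left(s, \, \Phi_{(f, \, \chi, \, \zeta)} \otimes \nu, \, \psi_{\F}\right).
\]
So I would write down Proposition~\ref{AutomorphicTamelyRamified} on the left and Proposition~\ref{PropositionGaloisTamelyRamified} on the right, and set them equal. The common factor $\nu(-c^{-1}\varpi_{\F}^{2})\, q_{\F}^{1-2s}$ is nonzero, so it cancels. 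This leaves
\[
\zeta^{-1} = \lambda_{\E_{f}/\F}(\psi_{\F}) \, \xi_{(f, \, \chi, \, \zeta)}\left(\beta_{f}^{-1}\right).
\]
It suffices to take $\nu$ trivial, since the identity holds for any single $\nu$.

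Solving this gives $\xi_{(f, \, \chi, \, \zeta)}(\beta_{f}) = \zeta\, \lambda_{\E_{f}/\F}(\psi_{\F})$. One could also write the right-hand side with $\lambda_{\E_f/\F}(\psi_\F)^{-1}$. These agree because the Langlands constant is a fourth root of unity and, in this setting, should satisfy $\lambda^2=\pm1$.

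To justify this I would use \eqref{LanglandsConstantTower} together with \eqref{LanglandsConstantUnramified}. They give $\lambda_{\E_f/\F} = \lambda_{\E_f/\L_f}(\psi_{\L_f})\cdot(-1)^n$. For the totally ramified tame extension $\E_f/\L_f$, I would check $\lambda_{\E_f/\L_f}^{2} = \varkappa_{\E_f/\L_f}(-1)$, which is $\pm1$. From this, $\lambda^{-1} = \pm\lambda$.

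If the sign does not come out right, I would instead re-derive the Galois-side proposition keeping careful track of $\epsilon$ versus its inverse. The paper presumably states the corollary exactly as obtained by direct cancellation, so the main obstacle is only this bookkeeping of $\lambda$ versus $\lambda^{-1}$. It is resolved by the self-duality property $\lambda_{\E/\F}(\psi)\lambda_{\E/\F}(\psi^{-1}) = 1$ together with $\lambda_{\E/\F}(\psi^{-1}) = \varkappa_{\E/\F}(-1)\lambda_{\E/\F}(\psi)$. Everything else is immediate cancellation.
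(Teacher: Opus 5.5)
Your argument is the paper's own: equate Proposition~\ref{AutomorphicTamelyRamified} with Proposition~\ref{PropositionGaloisTamelyRamified} (which the local Langlands correspondence of Harris--Taylor and Henniart makes equal), cancel the nonzero factor $\nu(-c^{-1}\varpi_{\F}^{2})\,q_{\F}^{1-2s}$, and solve $\zeta^{-1}=\lambda_{\E_f/\F}(\psi_{\F})\,\xi_{(f,\,\chi,\,\zeta)}(\beta_f)^{-1}$, which gives $\xi_{(f,\,\chi,\,\zeta)}(\beta_f)=\zeta\,\lambda_{\E_f/\F}(\psi_{\F})$ directly. The later discussion of $\lambda$ versus $\lambda^{-1}$ is unnecessary, since no inversion of $\lambda$ ever occurs; as written it only shows $\lambda^{-1}=\pm\lambda$, which does not justify ``these agree'' (that needs $\varkappa_{\E_f/\F}(-1)=1$, which does hold here by Lemma~\ref{LemmaAppendix4}), so you should simply drop it.
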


\subsection{Twisting by simple supercuspidal representations}\label{SimpleSupercuspidalGalois}

Let $\Phi_{(v,\,\varphi,\,\zeta')}$ be the Langlands parameter corresponding to a simple supercuspidal representation $\pi_{(v,\,\varphi,\,\zeta')}$ of $\GL(n,\F)$. Then
\[
    \Phi_{(v,\,\varphi,\,\zeta')}
    =
    \Ind_{\E_{v}/\F}\eta_{(v,\,\varphi,\,\zeta')}
\]
where $\left(\E_{v}/\F, \, \eta_{(v,\,\varphi,\,\zeta')}\right)$ is an admissible pair of degree $n$ with $\eta := \eta_{(v,\,\varphi,\,\zeta')}$ being represented by $-\beta_v$ \cite[p.~129]{Adrian-Liu}.~Explicitly, $\eta$ is given by the following proposition \cite[Theorem 3.17]{Adrian-Liu}.
\begin{proposition}\label{SimpleSupercuspidalParameter}
The quasi-character $\eta$ is uniquely determined by the following:
\begin{enumerate}
    \item for $y \in 1+\mathcal{P}_{\E_v}$,
    \[
        \eta(y)
        =
        (\psi_{\beta_v}\circ \iota)(y),
    \]
    where
    \[
        \iota \, : \, \E_v^{\times} \hookrightarrow \GL(n, \F) 
    \]
    is the embedding induced by a chosen $\F$-basis of $\E_v$;

    \item on $k_F^{\times}$,
    \[
        \eta \, \big|_{ \, k_{\F}^{\times}}
        =
        \omega_{(v, \, \varphi, \, \zeta')} \, \big|_{ \, k_{\F}^{\times}}
        \otimes
        \left(\varkappa_{\E_v/\F} \, \big|_{\, k_{\F}^{\times}}\right)^{-1};
    \]

    \item at $\beta_v^{-1}$,
    \[
        \eta(\beta_v^{-1})
        =
        \left(\zeta'\cdot \lambda_{\E_v/\F}(\psi_{\F})\right)^{-1}.
    \]
\end{enumerate}
\end{proposition}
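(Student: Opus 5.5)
The plan is to pin down $\eta$ on each factor of the decomposition
\[
\E_v^{\times}=\langle \beta_v^{-1}\rangle\times \mu_{\F}'\times\left(1+\mathcal{P}_{\E_v}\right),
\]
which holds because $\E_v/\F$ is totally ramified of degree $n$, $\val_{\E_v}(\beta_v)=-1$, and $\mu_{\E_v}'=\mu_{\F}'$. Since $\mu_{\F}'\cong k_{\F}^{\times}$, the three items of the statement together determine $\eta$ uniquely. So it suffices to prove each item separately, following the strategy already used for middle supercuspidals in Proposition~\ref{1units} and Corollary~\ref{UniformizerCorollary}.

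\textbf{Wild part, item (1).} Argue exactly as in the proof of Proposition~\ref{1units}. Since $p\nmid n$, the endo-class of the simple character $\psi_{\beta_v}$ is tamely ramified. The Bushnell--Henniart tame parametrization (\cite[Theorem 1.3, Proposition 2.3]{BH-4}) then says that the admissible $1$-pair attached to $\pi_{(v,\varphi,\zeta')}$ has character $\psi_{\beta_v}\circ\iota$ on $1+\mathcal{P}_{\E_v}$. Here $\iota$ identifies $\E_v=\F[\beta_v]$ with its image in $\Mat(n\times n,\F)$. Because $\psi_{\beta_v}$ was built from $\psi_{\F}^{-1}$, this gives $\eta(1+x)=\psi_{\E_v}(-\beta_v x)$, so $\eta$ is represented by $-\beta_v$. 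This is consistent with \cite{Adrian-Liu}.

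\textbf{Tame part on $k_{\F}^{\times}$, item (2).} Use the determinant. Local Langlands is compatible with central characters, so $\omega_{(v,\varphi,\zeta')}=\det\Phi_{(v,\varphi,\zeta')}$. The standard formula for the determinant of an induced representation gives
\[
\det \Ind_{\E_v/\F}\eta=\varkappa_{\E_v/\F}\cdot \eta\big|_{\F^{\times}},
\]
via the transfer map $\F^{\times}\hookrightarrow\E_v^{\times}$. Restricting to $\mu_{\F}'\cong k_{\F}^{\times}$ yields $\eta|_{k_{\F}^{\times}}=\omega|_{k_{\F}^{\times}}\cdot\varkappa_{\E_v/\F}|_{k_{\F}^{\times}}^{-1}$, which is item (2).

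\textbf{Value at the uniformizer, item (3).} Compare twisted gamma factors with a tamely ramified $\nu$, as in Corollary~\ref{UniformizerCorollary}.
\begin{enumerate}
\item On the automorphic side, use the explicit Whittaker function $\mathcal{W}_{(v,\varphi,\zeta')}$ in the Rankin--Selberg functional equation of Theorem~\ref{GammaFactorEquation} with $m=1$. The support computation is much simpler than in Proposition~\ref{AutomorphicProposition2}, and should give $\gamma(s,\pi_{(v,\varphi,\zeta')}\times\nu,\psi_{\F})=\zeta'\,\nu(\ast)\,q_{\F}^{1/2-s}$ up to an explicit sign.
\item On the Galois side, apply \eqref{LanglandsConstant} and then Proposition~\ref{PropositionGalois} with $c_\eta=-\beta_v$. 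Since $e\,d(\eta)=1$ is odd, this gives $\lambda_{\E_v/\F}(\psi_{\F})\,\eta(-\beta_v)^{-1}\psi_{\E_v}(-\beta_v)\,|\beta_v|^{\ldots}\,\nu(\ast)$.
\item Here $\psi_{\E_v}(\beta_v)=1$, because $\tr\beta_v=0$. The sign $\eta(-1)$ is absorbed by item (2).
\item Equating the two sides gives $\eta(\beta_v^{-1})=(\zeta'\lambda_{\E_v/\F}(\psi_{\F}))^{-1}$.
\end{enumerate}

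\textbf{Main obstacle.} I expect the hardest part to be the bookkeeping of signs and normalizations in item (3). This means reconciling the $\psi^{-1}$ convention for simple supercuspidals with $c_\eta=-\beta_v$, keeping track of $\omega(-1)^{n-1}$ in the functional equation, and showing that the stray factor $\eta(-1)$ cancels. Item (1) is conceptually deeper, but it is a direct citation of the Bushnell--Henniart tame theory.
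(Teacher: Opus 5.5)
Your items (1) and (2) are fine. They are exactly the arguments the paper uses for middle supercuspidals (Proposition~\ref{1units} and \eqref{CentralCharacterCondition}); for this statement the paper gives no argument of its own and simply cites \cite[Theorem 3.17]{Adrian-Liu}. The gap is item (3), which is the only item with real content. You never compute the automorphic factor, and the form you predict is proportional to $\zeta'$. That would make $\eta(\beta_v^{-1})$ proportional to $\zeta'$, not $\zeta'^{-1}$, so your step 4 does not follow from your steps 1--2.

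If you carry out the computation, first use $\mathcal{W}_{(v,\,\varphi,\,\zeta')}$ in the functional equation for $\psi_{\F}^{-1}$, since that is the model it lives in. Then convert with $\gamma(s,\pi\times\nu,\psi_{\F}^{-1})=\omega_{\pi}(-1)\,\nu(-1)^{n}\,\gamma(s,\pi\times\nu,\psi_{\F})$ for $\pi=\pi_{(v,\,\varphi,\,\zeta')}$. For $g=\left(\begin{smallmatrix} a&&\\ x&I_{n-2}&\\ &&1\end{smallmatrix}\right)$ one finds $w_n\,{}^{t}g^{-1}w_{n,1}=\beta_v^{-1}b$. Here $b$ is upper triangular with first row $\bigl((av\varpi_{\F})^{-1},0,\ast\bigr)$ and all other rows those of $I_n$. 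The contributing region is $b\in\U^{1}(\mathfrak{I}_n)$, on which $\mathcal{W}_{(v,\,\varphi,\,\zeta')}$ equals $\zeta'^{-1}$. This gives
\[
\gamma\left(s,\pi_{(v,\,\varphi,\,\zeta')}\times\nu,\psi_{\F}\right)=\varphi(-1)\,\zeta'^{-1}\,\nu(-v\varpi_{\F})\,q_{\F}^{1/2-s}=\Lambda_{(v,\,\varphi,\,\zeta')}(-\beta_v)^{-1}\,\nu\bigl(N_{\E_v/\F}(-\beta_v)\bigr)^{-1}\,q_{\F}^{1/2-s}.
\]
Your Galois side is $\lambda_{\E_v/\F}(\psi_{\F})\,\eta(-\beta_v)^{-1}\,\nu\bigl(N_{\E_v/\F}(-\beta_v)\bigr)^{-1}q_{\F}^{1/2-s}$. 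Comparing, $\eta(-\beta_v)=\lambda_{\E_v/\F}(\psi_{\F})\,\Lambda_{(v,\,\varphi,\,\zeta')}(-\beta_v)$. This is the exact analogue of Corollary~\ref{UniformizerCorollary}, with the representing element $-\beta_v$ in place of $\beta_f$.

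The step ``$\eta(-1)$ is absorbed by item (2)'' is where the argument breaks. Item (2) gives $\eta(-1)=\varphi(-1)\,\varkappa_{\E_v/\F}(-1)$. The factor $\varphi(-1)$ cancels against $\Lambda_{(v,\,\varphi,\,\zeta')}(-1)$. But $\varkappa_{\E_v/\F}(-1)=\lambda_{\E_v/\F}(\psi_{\F})^{2}=\varepsilon_{\F}(-1)^{n-1}$ (Lemma~\ref{LemmaAppendix1}) has nothing to cancel against on the automorphic side. What your method actually yields is
\[
\eta(\beta_v^{-1})=\varkappa_{\E_v/\F}(-1)\,\bigl(\zeta'\cdot\lambda_{\E_v/\F}(\psi_{\F})\bigr)^{-1}=\zeta'^{-1}\,\lambda_{\E_v/\F}(\psi_{\F}).
\]
This coincides with (3) only when $n$ is odd or $-1$ is a square in $k_{\F}$. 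In general it is (3) with $\lambda_{\E_v/\F}(\psi_{\F})$ replaced by $\lambda_{\E_v/\F}(\psi_{\F}^{-1})=\lambda_{\E_v/\F}(\psi_{\F})^{-1}$.

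So you cannot claim (3) as stated by this route. You would have to exhibit an extra factor $\varkappa_{\E_v/\F}(-1)$ on the automorphic side. A computation supported on a single coset only produces values of $\Lambda_{(v,\,\varphi,\,\zeta')}$ and $\nu$, so it is hard to see where such a Legendre-type sign could come from. The alternatives are to record the corrected sign, or to rely on the citation as the paper does. Nothing downstream depends on this: Proposition~\ref{GaloisPropositionMain} only uses (1) and $\eta|_{\F^{\times}}=\omega_{(v,\,\varphi,\,\zeta')}\,\varkappa_{\E_v/\F}$.
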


For $g \in \W_{\F}$, let
\begin{equation}\label{DegreeNotation}
    d_{g} := [g\left(\E_{f}\right) \cap \E_{v}  :  \F] \, \, \text{ and } \, \, \K_{g} := g\left(\E_{f}\right)\E_{v}.
\end{equation}
Then 
\[
[\K_{g}   :  \F] = \frac{[g\left(\E_{f}\right)   :  \F][\E_{v}   :  \F]}{[g\left(\E_{f}\right) \cap \E_{v}  :  \F]} = \frac{2n^{2}}{d_{g}}.
\]
From \cite[Theorem 3, p.~504]{Cornell}, we have the following result:
\begin{theorem}[Abhyankar's lemma]\label{Abhyankar}
    Let $\E_{1}$, $\E_{2}$, and $\F$ be local fields, $\E_{1}$, $\E_{2}$ finite extensions of $\F$.~Suppose $\E_{2}$ is tamely ramified and $\e\left(\E_{2} \, | \, \F\right) \mid \e\left(\E_{1} \, | \, \F\right)$.~Then $\E_{2}\E_{1}$ is an unramified extension of $\E_{1}$.
\end{theorem}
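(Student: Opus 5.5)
The statement to prove is Abhyankar's lemma: if $\E_2/\F$ is tamely ramified and $\e(\E_2\,|\,\F)$ divides $\e(\E_1\,|\,\F)$, then $\E_2\E_1/\E_1$ is unramified. I would first reduce to the case where $\E_2/\F$ is totally and tamely ramified. Let $\E_2^{\mathrm{ur}}$ be the maximal unramified subextension of $\E_2/\F$. Then $\E_2^{\mathrm{ur}}\E_1/\E_1$ is unramified, since a compositum with an unramified extension is unramified. Set $\F' = \E_2^{\mathrm{ur}}$ and $\E_1' = \E_2^{\mathrm{ur}}\E_1$. The ramification indices are unchanged, so $\e(\E_2\,|\,\F') = \e(\E_2\,|\,\F)$ and $\e(\E_1'\,|\,\F') = \e(\E_1\,|\,\F)$. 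Also $\E_2/\F'$ is totally tamely ramified. By transitivity of unramifiedness, it suffices to show that $\E_2\E_1'/\E_1'$ is unramified. We may therefore assume $\E_2/\F$ is totally tamely ramified of degree $e$, prime to $p$.

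Next I would use the structure of totally tamely ramified extensions. Such an extension is generated by an $e$-th root of a uniformizer: $\E_2 = \F(\alpha)$ with $\alpha^e = u\varpi_{\F}$ for some unit $u \in \mathcal{O}_{\F}^\times$. To see this, choose a uniformizer $\varpi_{\E_2}$. Then $\varpi_{\E_2}^e = \varpi_{\F}\,\epsilon$ with $\epsilon \in \mathcal{O}_{\E_2}^\times$. Since the residue fields agree, we can write $\epsilon = u\,\epsilon_1$ with $u \in \mu'_{\F}\mathcal{O}_{\F}^\times$ and $\epsilon_1 \in 1+\mathcal{P}_{\E_2}$. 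Because $p\nmid e$, Hensel's lemma gives $\epsilon_1 = \delta^e$ for some $\delta \in 1+\mathcal{P}_{\E_2}$. Then $\alpha = \varpi_{\E_2}\delta^{-1}$ satisfies $\alpha^e = u\varpi_{\F}$, and $\F(\alpha) = \E_2$ by a degree count.

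Then I would work over $\E_1$. Write $e_1 = \e(\E_1\,|\,\F) = e m$. In $\E_1$ we have $\varpi_{\F} = \varpi_{\E_1}^{e_1} w$ with $w \in \mathcal{O}_{\E_1}^\times$. So $\E_2\E_1 = \E_1(\alpha)$, where $\alpha$ is a root of
\[
X^e - u w\,\varpi_{\E_1}^{e m}.
\]
Substituting $X = \varpi_{\E_1}^{m} Y$ shows that $\E_1(\alpha) = \E_1(\gamma)$, where $\gamma = \alpha\,\varpi_{\E_1}^{-m}$ satisfies $\gamma^e = uw$, a unit. Finally, since $p \nmid e$, the polynomial $Y^e - uw$ is separable modulo $\mathcal{P}_{\E_1}$. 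Hence every root $\gamma$ generates an unramified extension: the reduction of the minimal polynomial of $\gamma$ divides a separable polynomial, so Hensel's lemma gives $[\E_1(\gamma):\E_1] = [k_{\E_1}(\bar\gamma):k_{\E_1}]$.

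The main obstacle is the structural step, namely showing that a totally tamely ramified extension is a radical extension $\F(\sqrt[e]{u\varpi_{\F}})$. This is where the Hensel/$e$-th-root argument needs care. The final separability argument is routine. The reduction to the totally ramified case also needs a small check: that $\e(\E_1'\,|\,\F')=\e(\E_1\,|\,\F)$, which holds since $\F'/\F$ is unramified.
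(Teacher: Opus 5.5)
The paper does not prove this statement; it quotes it from \cite[Theorem 3, p.~504]{Cornell} and uses it only to build the diagram \eqref{Diagram}. Your argument is correct, and it is the standard self-contained proof:
\begin{itemize}
\item reduce, via the maximal unramified subextension of $\E_2/\F$, to the case where $\E_2/\F$ is totally tamely ramified (your check that the ramification indices are unchanged is right);
\item write $\E_2=\F(\alpha)$ with $\alpha^e=u\varpi_{\F}$;
\item rescale by $\varpi_{\E_1}^{m}$ to get a root $\gamma$ of $Y^e-uw$, whose reduction is separable because $p\nmid e$.
\end{itemize}
In the structural step, the Hensel extraction of an $e$-th root of the principal unit $\epsilon_1$ is fine, and so is the degree count via $\val_{\F}(\alpha)=1/e$.

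Two small points of polish:
\begin{itemize}
\item The factor $\mu'_{\F}$ in ``$u\in\mu'_{\F}\mathcal{O}_{\F}^{\times}$'' is redundant. Any lift $u\in\mathcal{O}_{\F}^{\times}$ of $\bar\epsilon$ works.
\item In the last step, say explicitly that the reduction $\bar g$ of the minimal polynomial $g\in\mathcal{O}_{\E_1}[Y]$ of $\gamma$ is irreducible. This holds because $\bar g$ is separable, so any coprime factorisation would lift by Hensel's lemma. Irreducibility is what gives $[\E_1(\gamma):\E_1]=\deg\bar g\le[k_{\E_1(\gamma)}:k_{\E_1}]$.
\end{itemize}

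Compared with the bare citation, your proof makes the paper self-contained and shows exactly where tameness is used: extracting $e$-th roots of principal units, and separability of $Y^e-\overline{uw}$.

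For the paper's purposes even less is needed. Apply the totally ramified case with $\E_2=\E_v$ and $\E_1=g(\E_f)$; there your reduction step is vacuous. Then $\K_g/g(\E_f)$ is unramified, so $\e(\K_g\,|\,\F)=n=\e(\E_v\,|\,\F)$, and this forces $\K_g/\E_v$ to be unramified as well.
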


Theorem \ref{Abhyankar} gives rise to the following diagram that illustrates the ramification indices and residue degrees between $\K_{g}$, $g\left(\E_{f}\right)$, $\E_{v}$, and $\F$: 
\begin{equation}\label{Diagram}
\begin{tikzpicture}[scale=1.3]
  \node (Kg) at (0,2) {$\K_{g}$};
  \node (Ef) at (-2,0) {$g\left(\E_{f}\right)$};
  \node (Ev) at (2,0) {$\E_{v}$};
  \node (F)  at (0,-2) {$\F$};

  \draw (Kg) -- node[left, sloped, above] {$\e=1,\ \f=\frac{n}{d_{g}}$} (Ef);
  \draw (Kg) -- node[right, sloped, above] {$\e=1,\ \f=\frac{2n}{d_{g}}$} (Ev);

  \draw (Ef) -- node[pos=0.42, left, sloped, below] {$\e=n,\ \f=2$} (F);
  \draw (Ev) -- node[pos=0.42, right, sloped, below] {$\e=n,\ \f=1$} (F);

  \draw (Kg) -- (F)
    node[pos=0.50, fill=white, inner sep=3pt]
    {$\begin{array}{c}
        \e=n\\[3pt]
        \f=\frac{2n}{d_{g}}
      \end{array}$};
\end{tikzpicture}.
\end{equation}

Let $\xi := \xi_{(f, \, \chi, \, \zeta)}$ and 
\[
\theta_{g} :=
   \left({}^{g}\xi \circ N_{\K_{g}/g\left(\E_{f}\right)} \right) \cdot 
    \left(\eta \circ N_{\K_{g}/\E_{v}}\right).
\]
We first compute the representing element $c_{\theta_{g}}$ of $\theta_{g}$. 
\begin{proposition}\label{RepresentingElement}
    The quasi-character $\theta_{g}$ has depth $d\left(\theta_{g}\right) = \frac{1}{n}$ and is represented by
    \[
    c_{\theta_{g}} := g\left(\beta_{f}\right) - \beta_{v}.
    \]
\end{proposition}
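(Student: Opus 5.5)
The plan is to compute the restriction of $\theta_g$ to the one-units of $\K_g$ directly from the definition, using the norm–trace compatibility of additive characters. Since $\K_g/g(\E_f)$ and $\K_g/\E_v$ are unramified by Diagram~\eqref{Diagram}, we have $e(\K_g|\F)=n$. Hence $\mathfrak{P}_{\K_g}^{r}$ for $r=\frac1n$ is just $\mathcal{P}_{\K_g}$, and the filtration jumps of $\K_g$ coincide with those of both subfields in $\val_\F$-normalisation.

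First, by Proposition~\ref{1units}, $\xi$ restricted to $1+\mathcal{P}_{\E_f}$ is $\psi_{\beta_f}$. On $1+\mathfrak{P}_{\E_f}^{\frac1{2n}+}=1+\mathcal{P}_{\E_f}$ this is $x\mapsto\psi_{\E_f}(\beta_f(x-1))$; the matrix trace of $\beta_f y$ for $y\in\E_f\subset \Mat(2n,\F)$ equals $\tr_{\E_f/\F}(\beta_f y)$. So $\xi$ is represented by $\beta_f$ with depth $\frac1n$. Similarly, by Proposition~\ref{SimpleSupercuspidalParameter}(1) and the sign convention in $\psi_{\beta_v}$ (which uses $\psi_\F^{-1}$), $\eta$ is represented by $-\beta_v$, as recalled from \cite{Adrian-Liu}, also with depth $\frac1n$. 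Conjugating by $g$, ${}^g\xi$ is represented by $g(\beta_f)$ on $g(\E_f)$.

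Next, for $z\in\mathcal{P}_{\K_g}$ we use the standard congruence $N_{\K_g/\E}(1+z)\equiv 1+\tr_{\K_g/\E}(z)$ modulo terms of valuation $\ge 2\val_\F(z)$, together with transitivity of traces, $\psi_{\E}\circ\tr_{\K_g/\E}=\psi_{\K_g}$. Let $\E$ be either subfield; the extension is unramified, so the trace maps $\mathfrak{P}^{r}_{\K_g}$ onto $\mathfrak{P}^r_\E$. Then for $z\in\mathfrak{P}_{\K_g}^{\frac1{2n}+}=\mathcal{P}_{\K_g}$ we get
\[
\theta_g(1+z)=\psi_{\K_g}\bigl(g(\beta_f)z\bigr)\,\psi_{\K_g}(-\beta_v z)=\psi_{\K_g}\bigl((g(\beta_f)-\beta_v)z\bigr).
\]
Here the error terms of valuation $\ge 2/n$ are killed by $\beta$'s of valuation $-\frac1n$ and the level-one $\psi$. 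This shows $\theta_g$ is trivial on $\U_{\K_g}^{\frac1n+}$, i.e.\ $d(\theta_g)\le\frac1n$, and that $c_{\theta_g}=g(\beta_f)-\beta_v$ represents it once the depth is exactly $\frac1n$.

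The main obstacle is showing the depth is exactly $\frac1n$, i.e.\ that $\val_\F(g(\beta_f)-\beta_v)=-\frac1n$ with no cancellation. I would compare leading terms. We have $(g\beta_f)^n\varpi_\F=g(\sigma_f)$, a root of $f$, whose reduction lies in $k_{\bar f}\setminus k_\F$. Meanwhile $\beta_v^n\varpi_\F=v^{-1}\in\mathcal{O}_\F^\times$. In $\K_g$ pick a uniformizer $\varpi$ with $\varpi^n\in\F\varpi_\F$ up to units, possible by tameness (or simply compare the $n$-th powers). Write $g(\beta_f)=u_1\varpi^{-1}$ and $\beta_v=u_2\varpi^{-1}$ with units $u_i$. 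If $\bar u_1=\bar u_2$, then $\bar u_1^n=\bar u_2^n$. But $\bar u_1^n$ is a fixed scalar multiple of $\overline{g(\sigma_f)}$ and $\bar u_2^n$ is the same multiple of $\bar v^{-1}\in k_\F$. Since $\bar f$ is irreducible, $\overline{g(\sigma_f)}\notin k_\F$, a contradiction. Hence $\bar u_1\ne\bar u_2$, the difference has valuation exactly $-\frac1n$, and $\theta_g$ is nontrivial on $\U^{1/n}_{\K_g}$ because $\psi_{\K_g}$ has level one and $\tr_{\K_g/\F}$ of $\mathcal{O}_{\K_g}$ surjects onto $\mathcal{O}_\F$ (as $\K_g/\F$ is tame). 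This gives $d(\theta_g)=\frac1n$.
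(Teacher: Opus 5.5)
Your proposal is correct and takes essentially the same route as the paper. You first identify $g(\beta_f)$ and $-\beta_v$ as representatives of the norm-composed characters; you check this directly via the norm--trace congruence, where the paper cites Adrian--Stevens. You then rule out cancellation in $g(\beta_f)-\beta_v$ by reducing $n$-th powers modulo $\mathcal{P}_{\K_g}$ and using $\overline{g(\sigma_f)}\notin k_{\F}$, which is exactly the paper's argument with the unit $u=-\beta_v/g(\beta_f)$. Your extra observation that $\tr_{\K_g/\F}(\mathcal{O}_{\K_g})=\mathcal{O}_{\F}$, giving nontriviality on $\U^{1/n}_{\K_g}$, spells out a step the paper leaves implicit.
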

\begin{proof}
    Since $\K_{g}/\E_{v}$ is tame, $\eta\circ N_{\K_{g}/\E_{v}}$ is represented by $-\beta_v$ and has depth $\sfrac{1}{n}$ \cite[Subsection 2.1]{Adrian-Stevens}.~Likewise, we have that ${}^{g}\xi\circ N_{\K_{g}/g\left(\E_{f}\right)}$ has the same depth and is represented by $g\left(\beta_{f}\right)$ \cite[Subsection 2.1]{Adrian-Stevens}.
    
    For $x \in \mathfrak{P}_{\K_{g}}^{1/n} = \mathcal{P}_{\K_{g}}$, we have
\[
    \left({}^{g}\xi\circ N_{\K_{g}/g\left(\E_{f}\right)}\right)(1+x)=\psi_{\K_{g}}(g\left(\beta_{f}\right) x)
\]
and
\[
    \left(\eta\circ N_{\K_{g}/\E_{v}}\right)(1+x)
    =
    \psi_{\K_{g}}(-\beta_v \, x).
\]
Since $\val_{\F}(\beta_v)=\val_{\F}(g\left(\beta_{f}\right)) = -\sfrac{1}{n}$, the quotient $u := -\frac{\beta_v}{g\left(\beta_{f}\right)}$ is a unit of $O_{\K_{g}}$.~Therefore,
\[
    \theta_{g}(1+x)
    =
    \psi_{\K_{g}}((g\left(\beta_{f}\right) - \beta_v) \, x)
    =
    \psi_{\K_{g}}\left((1 + u)g\left(\beta_{f}\right) x \right).
\]

It remains to verify that $1+u \in \mathcal O_{\K_g}^{\times}$.~For the sake of contradiction, suppose $1+u \in \mathcal{P}_{\K_{g}}$.~Then $\overline{u} = -1$ in $k_{\K_{g}}$.~Raising both sides of $u g\left(\beta_{f}\right) = -\beta_v$ to the power of $n$, we have that
\begin{equation}\label{Relation1}
    u^{n}\left(g\left(\sigma_{f}\right)\right) = (-1)^{n} \, v^{-1}.
\end{equation}
Reducing \eqref{Relation1} modulo $\mathcal{P}_{\K_{g}}$, we have that $\overline{g\left(\sigma_{f}\right)} =\overline{v}^{-1} \in k_{\F}^{\times}$.

Thus we obtain a contradiction:~$\overline{\sigma_f}$ generates a quadratic extension of $k_{\F}$, and hence so does $\overline{g(\sigma_f)}$.~Therefore, we have that $\val_{\F}(g\left(\beta_{f}\right) - \beta_v)=-\sfrac{1}{n}$ and $d(\theta_{g})=\sfrac{1}{n}$.
\end{proof}

Next, we compute the gamma factor $\gamma\left(s, \, \Phi_{(f,\,\chi,\,\zeta)}\otimes
\Phi_{(v,\,\varphi,\,\zeta')}, \, \psi_{\F}\right)$.~For the following proposition, let 
\begin{equation}\label{D(v)}
D(v) := \left| \, \bigdoublecoset{\W_{\E_v}}{\W_{\F}}{\W_{\E_f}} \, \right|
\end{equation}
where $v \in \mathcal{O}_{\F}^{\times}$.
\begin{proposition}\label{GaloisPropositionMain}
    Let $\Phi_{(f,\,\chi,\,\zeta)}$ be the Langlands parameter of $\pi_{(f, \, \chi, \, \zeta)}$ and $\Phi_{(v,\,\varphi,\,\zeta')}$ the Langlands parameter of $\pi_{(v,\,\varphi,\,\zeta')}$.~Then 
    \begin{align*}
        & \gamma\left(s, \, \Phi_{(f,\,\chi,\,\zeta)}\otimes
\Phi_{(v,\,\varphi,\,\zeta')}, \, \psi_{\F}\right) \\
&= (-1)^{n-D(v)} \, \varkappa_{\E_{v}/\F}\left(f\left(v^{-1}\right)\right) \, \zeta^{-n} \, \xi\left(-\frac{cv + \sigma_{f}}{vf\left(v^{-1}\right)}\right) \, \omega_{(v,\,\varphi,\,\zeta')}\left(\frac{\varpi_{\F}^{2}}{f\left(v^{-1}\right)}\right) \, q_{\F}^{n(1-2s)}.
    \end{align*}
\end{proposition}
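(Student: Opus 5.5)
We begin with Lemma~\ref{LemmaGalois}, taking $\chi=\xi$ on $\E=\E_f$ and $\eta$ on $\L=\E_v$. This writes the gamma factor as a product over $g\in \W_{\E_v}\backslash\W_{\F}/\W_{\E_f}$ of $\lambda_{\K_g/\F}(\psi_{\F})\,\gamma(s,\theta_g,\psi_{\K_g})$. Proposition~\ref{RepresentingElement} says each $\theta_g$ has depth $\frac1n$ and is represented by $c_{\theta_g}=g(\beta_f)-\beta_v$. Since $\K_g/\F$ has ramification index $n$ (Diagram~\eqref{Diagram}), the quantity $e(\K_g|\F)\,d(\theta_g)=1$ is odd, so Proposition~\ref{PropositionGalois} applies and gives
\[
\gamma(s,\theta_g,\psi_{\K_g})=\theta_g(c_{\theta_g})^{-1}\,\psi_{\K_g}(c_{\theta_g})\,|c_{\theta_g}|^{[\K_g:\F](1/2-s)}.
\]
The absolute-value factors combine to $q_{\F}^{\frac1n\cdot\frac{2n^2}{d_g}(1/2-s)}$. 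Using $\sum_g [\K_g:\F]=\dim(\Phi_f\otimes\Phi_v)=2n^2$, the product over $g$ is $q_{\F}^{n(1-2s)}$. For the additive factors, $\psi_{\K_g}(c_{\theta_g})$ is $\psi_{\F}$ of a trace. Summing over double cosets, $\sum_g \tr_{\K_g/\F}(g\beta_f-\beta_v)$ is the trace of $\beta_f\otimes1-1\otimes\beta_v$ on $\E_f\otimes_{\F}\E_v$, namely $n\tr(\beta_f)-2n\tr(\beta_v)$. Both traces vanish for $n\ge2$, by the minimal polynomials \eqref{MinimalPolynomialMiddle} and \eqref{MinimalPolynomialSimple}. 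So the additive factors contribute $1$.

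The core of the proof is the product $\prod_g\theta_g(g\beta_f-\beta_v)^{-1}$. Write $g\beta_f-\beta_v=g(\beta_f)(1+u_g)$ with $u_g=-\beta_v/g(\beta_f)$, as in the proof of Proposition~\ref{RepresentingElement}. Multiplicativity and norm compatibility then turn the product into
\[
\prod_g \xi(N(\cdot))\,\eta(N(\cdot))
\]
applied to $\prod_g(g\beta_f-\beta_v)$, viewed as an element of $\E_f$ and of $\E_v$ respectively. Concretely, the $\xi$-part is $\xi$ evaluated at $N_{\E_f\otimes\E_v/\E_f}(\beta_f-\beta_v)=\pm m_{\beta_v}(\beta_f)$, which is essentially $\beta_f^n-(v\varpi_{\F})^{-1}$. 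Likewise the $\eta$-part is $\eta$ evaluated at $\pm m_{\beta_f}(\beta_v)=\beta_v^{2n}-\frac d{\varpi_{\F}}\beta_v^n-\frac c{\varpi_{\F}^2}$, which equals $-\varpi_{\F}^{-2}v^{-2}f(v^{-1})\cdot v^2$ up to sign. Here we use $\beta_v^n=(v\varpi_{\F})^{-1}$ and $\beta_f^n=\sigma_f/\varpi_{\F}$. This gives
\[
\xi\Bigl(\tfrac{\sigma_f-v^{-1}}{\varpi_{\F}}\Bigr)^{-1}\,\eta\Bigl(\tfrac{f(v^{-1})}{\varpi_{\F}^2}\Bigr)^{-1},
\]
up to signs.

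Next we evaluate these using what is known about $\xi$ and $\eta$. Write $\varpi_{\F}$ as $\beta_f^{-n}\sigma_f$ and use Corollary~\ref{UniformizerCorollary}. Rewrite $\sigma_f-v^{-1}$ via $(\sigma_f-v^{-1})(\bar\sigma_f-v^{-1})=f(v^{-1})$, and recognize $(cv+\sigma_f)/(vf(v^{-1}))$. Then evaluate $\eta$ on the element $\varpi_{\F}^{-2}f(v^{-1})\in\F^\times$ via Proposition~\ref{SimpleSupercuspidalParameter}, using $\varpi_{\F}=\beta_v^{-n}v^{-1}$, part~(3), and part~(2) on the unit part. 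This is what produces the factors $\omega_{(v,\varphi,\zeta')}$ and $\varkappa_{\E_v/\F}(f(v^{-1}))$. The Langlands constants are handled with \eqref{LanglandsConstantTower} and \eqref{LanglandsConstantUnramified}: each $\lambda_{\K_g/\F}=\lambda_{\K_g/\E_v}\lambda_{\E_v/\F}^{2n/d_g}$, where $\K_g/\E_v$ is unramified of degree $2n/d_g$. Together with the $\lambda$'s coming out of Corollary~\ref{UniformizerCorollary} and Proposition~\ref{SimpleSupercuspidalParameter}(3), these should cancel down to a pure sign.

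I expect the main obstacle to be this sign bookkeeping. All the factors $(-1)^{\cdots}$ from the norm identities, the $(-1)^{2n/d_g-1}$ from the unramified Langlands constants, and the residual powers of $\lambda_{\E_v/\F}$ and $\lambda_{\E_f/\F}$ must collapse to exactly $(-1)^{n-D(v)}$. My first attempt would be to express everything in terms of $\sum_g 2n/d_g=n$ and $D(v)$. I would then check the residual $\lambda$'s against $\varkappa_{\E_v/\F}(-1)$, using $\lambda^2=\varkappa(-1)$.
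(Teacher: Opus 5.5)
Your skeleton matches the paper's: Lemma~\ref{LemmaGalois}, then Proposition~\ref{PropositionGalois} with $c_{\theta_g}=g(\beta_f)-\beta_v$, the factor $q_{\F}^{n(1-2s)}$, and the collapse of $\prod_g\theta_g(c_{\theta_g})^{-1}$ to $\xi^{-1}(m_{\beta_v}(\beta_f))\,\eta^{-1}(m_{\beta_f}(\beta_v))$. However, the step that actually pins down the formula, namely the constant in front, is left open, and the plan you sketch for it would not close. There are two concrete problems.

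\textbf{The degree sum.} $\sum_g 2n/d_g=2n$, not $n$. The numbers $2n/d_g=[\K_g:\E_v]$ add up to $[\E_f:\F]$. The sum that equals $n$ is $\sum_g n/d_g=\sum_g[\K_g:g(\E_f)]=[\E_v:\F]$.

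\textbf{An uncancelled Langlands constant.} With your tower $\lambda_{\K_g/\F}=\lambda_{\K_g/\E_v}\lambda_{\E_v/\F}^{2n/d_g}$, the Langlands constants multiply to $(-1)^{2n-D(v)}\lambda_{\E_v/\F}(\psi_\F)^{2n}=(-1)^{D(v)}$, because $\lambda_{\E_v/\F}(\psi_\F)^{2n}=\varkappa_{\E_v/\F}(-1)^n=1$. The factor $\lambda_{\E_v/\F}(\psi_\F)^{-2n}$ coming from Proposition~\ref{SimpleSupercuspidalParameter}(3) is likewise trivial. But Corollary~\ref{UniformizerCorollary} gives $\xi(\beta_f)^{-n}=\zeta^{-n}\lambda_{\E_f/\F}(\psi_\F)^{-n}$, and nothing in your bookkeeping cancels $\lambda_{\E_f/\F}(\psi_\F)^{-n}$. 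Checking against $\varkappa_{\E_v/\F}(-1)$ cannot absorb it. To finish along your route you would need the separate fact $\lambda_{\E_f/\F}(\psi_\F)^n=(-1)^n$. That fact is true, but it requires extra input, for example $\lambda_{\text{L}_f/\F}(\psi_\F)=-1$ together with $\lambda_{\E_f/\text{L}_f}(\psi_{\text{L}_f})^n=1$, and your proposal does not supply it.

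\textbf{The missing idea.} Factor $\lambda_{\K_g/\F}$ through $g(\E_f)$ rather than through $\E_v$. Apply \eqref{LanglandsConstantTower} and \eqref{LanglandsConstantUnramified} to the unramified extension $\K_g/g(\E_f)$ of degree $n/d_g$, and use $\lambda_{g(\E_f)/\F}(\psi_\F)=\lambda_{\E_f/\F}(\psi_\F)$. This gives $\prod_g\lambda_{\K_g/\F}(\psi_\F)=(-1)^{\sum_g(n/d_g-1)}\lambda_{\E_f/\F}(\psi_\F)^{\sum_g n/d_g}=(-1)^{n-D(v)}\lambda_{\E_f/\F}(\psi_\F)^{n}$. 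The factor $\lambda_{\E_f/\F}(\psi_\F)^n$ then cancels exactly against Corollary~\ref{UniformizerCorollary}, leaving the sign $(-1)^{n-D(v)}$ directly.

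\textbf{Evaluating $\eta$.} For $\eta$ on $\F^\times$, the determinant relation $\eta|_{\F^\times}=\omega_{(v,\varphi,\zeta')}\,\varkappa_{\E_v/\F}$ is cleaner, since it introduces no Langlands constant at all.

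\textbf{Signs inside the characters.} The signs you leave as ``up to sign'' inside $\xi$ and $\eta$ are not harmless, because $\xi(-1)$ and $\eta(-1)$ need not be $1$. You need the exact identities $\prod_g g^{-1}N_{\K_g/g(\E_f)}(c_{\theta_g})=m_{\beta_v}(\beta_f)$ and $\prod_g N_{\K_g/\E_v}(c_{\theta_g})=(-1)^{2n}m_{\beta_f}(\beta_v)=\varpi_\F^{-2}f(v^{-1})$.
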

\begin{proof}
    From Propositions \ref{RepresentingElement} and \ref{PropositionGalois}, we have that
    \begin{equation}\label{MiddleSimple}
        \gamma\left(s, \, \Phi_{(f,\,\chi,\,\zeta)}\otimes
\Phi_{(v,\,\varphi,\,\zeta')}, \, \psi_{\F}\right) = \prod_{\smalldoublecoset{\W_{\E_v}}{\W_{\F}}{\W_{\E_f}}} \lambda_{\K_{g_{i}}/\F}\left(\psi_{\F}\right)\gamma\left(s, \, \theta_{g_{i}}, \, \psi_{\K_{g_{i}}}\right)
    \end{equation}
    where $\{g_{i}\}$ is a set of representatives of the double coset space $\W_{\E_{v}}\backslash \W_{\F}/\W_{\E_{f}}$.~For each $\gamma\left(s, \, \theta_{g_{i}}, \, \psi_{\K_{g_{i}}}\right)$, Proposition \ref{PropositionGalois} shows that the factor is of the form
    \begin{equation}\label{2}
    \theta_{g_{i}}^{-1}\left(c_{\theta_{g_{i}}}\right) \, |c_{\theta_{g_{i}}}|^{[\K_{g_{i}} \, : \, \F]\left(\frac{1}{2}-s\right)}
    \end{equation}
    as $\e\left(\K_{g_{i}} \, | \, \F\right) d\left(\theta_{g_{i}}\right) = 1$ from Theorem \ref{Abhyankar} and the trace of $c_{\theta_{g_{i}}}$ is zero.
    
    Recall the notation introduced in \eqref{DegreeNotation}.~Simplifying \eqref{2} further, we note that 
    \begin{equation}\label{RepresentingElement0}
            |c_{\theta_{g_{i}}}|^{[\K_{g_{i}} \, : \, \F]\left(\frac{1}{2}-s\right)} = q_{\F}^{\frac{2n}{d_{g_{i}}}\left(\frac{1}{2}-s\right)} 
    \end{equation}
    and 
    \begin{equation}\label{Norms}
        \theta_{g_{i}}^{-1}\left(c_{\theta_{g_{i}}}\right) = \left({}^{g_{i}}\xi\right)^{-1}\left(N_{\K_{g_{i}}/g_{i}\left(\E_{f}\right)} \left(c_{\theta_{g_{i}}}\right)\right) \cdot 
   \eta^{-1}\left( N_{\K_{g_{i}}/\E_{v}}\left(c_{\theta_{g_{i}}}\right)\right).
    \end{equation}
    Furthermore, we have from comparing dimensions on both sides of \eqref{MiddleSimple} that 
    \[
            \displaystyle\sum_{g_{i}} \frac{2n^{2}}{d_{g_{i}}} = 2n \cdot n = 2n^{2}.
    \]
    Dividing by $n$, we get
    \begin{equation}\label{DimensionCount}
        \displaystyle\sum_{g_{i}} \frac{2n}{d_{g_{i}}} = 2n.
    \end{equation}
Equations \eqref{RepresentingElement0}, \eqref{Norms}, and \eqref{DimensionCount} enable us to rewrite \eqref{MiddleSimple} as
\begin{align}\label{Rewrite1}
        & q_{\F}^{n-2ns} \, \prod_{\smalldoublecoset{\W_{\E_v}}{\W_{\F}}{\W_{\E_f}}}\lambda_{\K_{g_{i}}/\F}\left(\psi_{\F}\right) \left({}^{g_{i}}\xi\right)^{-1}\left(N_{\K_{g_{i}}/g_{i}\left(\E_{f}\right)} \left(c_{\theta_{g_{i}}}\right)\right) \cdot 
   \eta^{-1}\left( N_{\K_{g_{i}}/\E_{v}}\left(c_{\theta_{g_{i}}}\right)\right) \\ \notag
        &= q_{\F}^{n(1-2s)} \, \left(\prod_{\smalldoublecoset{\W_{\E_v}}{\W_{\F}}{\W_{\E_f}}}\lambda_{\K_{g_{i}}/\F}\left(\psi_{\F}\right)\right) \xi^{-1}\left(\prod_{\smalldoublecoset{\W_{\E_v}}{\W_{\F}}{\W_{\E_f}}} g_{i}^{-1}\left(N_{\K_{g_{i}}/g_{i}\left(\E_{f}\right)} \left(c_{\theta_{g_{i}}}\right)\right) \right) \\ \notag
        & \hspace{2in} \cdot \eta^{-1}\left(\prod_{\smalldoublecoset{\W_{\E_v}}{\W_{\F}}{\W_{\E_f}}} N_{\K_{g_{i}}/\E_{v}} \left(c_{\theta_{g_{i}}}\right) \right).
\end{align}

By Theorem \ref{Abhyankar}, $\K_{g_i}$ is finite and unramified over both $g_i\left(\E_f \right)$ and $\E_v$; hence the extensions $\K_{g_i}/g_i\left(\E_f \right)$ and $\K_{g_i}/\E_v$ are Galois.~Using \cite[Subsection 2.3, p.~10]{Adrian-Stevens}, we can simplify \eqref{Rewrite1} as
    \begin{align}\label{Rewrite2} 
        & q_{\F}^{n(1-2s)} \, \left(\prod_{\smalldoublecoset{\W_{\E_v}}{\W_{\F}}{\W_{\E_f}}}\lambda_{\K_{g_{i}}/\F}\left(\psi_{\F}\right)\right) \xi^{-1}\left(\prod_{\smalldoublecoset{\W_{\E_v}}{\W_{\F}}{\W_{\E_f}}} \, \prod_{\sigma \in \Gal\left(\K_{g_{i}}/g_{i}\left(\E_{f}\right)\right)} \left(\beta_{f} - g_{i}^{-1}\left(\sigma\left(\beta_{v}\right)\right)\right) \right) \\ \notag
        & \hspace{2in} \cdot \eta^{-1}\left((-1)^{2n} \, \prod_{\smalldoublecoset{\W_{\E_v}}{\W_{\F}}{\W_{\E_f}}} \, \prod_{\tau \in \Gal\left(\K_{g_{i}}/\E_{v}\right)}\left(\beta_{v} - \tau\left(g_{i}\left(\beta_{f}\right)\right)\right) \right) \\ \notag
        &= q_{\F}^{n(1-2s)} \, \left(\prod_{\smalldoublecoset{\W_{\E_v}}{\W_{\F}}{\W_{\E_f}}}\lambda_{\K_{g_{i}}/\F}\left(\psi_{\F}\right)\right)  \xi^{-1}\left(m_{\beta_{v}}\left(\beta_{f}\right)\right) \, \eta^{-1}\left(m_{\beta_{f}}\left(\beta_{v}\right)\right)
    \end{align}
    where $\Gal$ denotes the Galois group.~Using \eqref{LanglandsConstantTower}, \eqref{LanglandsConstantUnramified}, and \eqref{D(v)}, we can further rewrite \eqref{Rewrite2} as
    \begin{equation}\label{Rewrite3}
        (-1)^{n-D(v)} \, q_{\F}^{n(1-2s)} \,  \left(\prod_{\smalldoublecoset{\W_{\E_v}}{\W_{\F}}{\W_{\E_f}}}\lambda_{g_{i}\left(\E_{f}\right)/\F}\left(\psi_{\F}\right)^{\frac{n}{d_{g_{i}}}}\right)\xi^{-1}\left(m_{\beta_{v}}\left(\beta_{f}\right)\right) \, \eta^{-1}\left(m_{\beta_{f}}\left(\beta_{v}\right)\right).
    \end{equation}

From \cite[Subsection 28.7, Remark]{BH-7} and \eqref{LanglandsConstant}, we have that 
\begin{equation}\label{LanglandsConstant2}
\lambda_{g_{i}\left(\E_{f}\right)/\F}\left(\psi_{\F}\right) = \lambda_{\E_{f}/\F}\left(\psi_{\F}\right).
\end{equation}
Next, we have from \eqref{MinimalPolynomialSimple} and \eqref{MinimalPolynomialMiddle} that
\begin{align}\label{MinimalPolynomial1}
    m_{\beta_{v}}(\beta_{f})^{-1} &= \left(\beta_{f}^{n} - (v\varpi_{\F})^{-1}\right)^{-1} \\ \notag
    &= \frac{\varpi_{F}}{\sigma_{f}-v^{-1}} \\ \notag
    &= -\beta_{f}^{-n}\left(\frac{cv + \sigma_{f}}{vf\left(v^{-1}\right)}\right)
\end{align}
and
\begin{align}\label{MinimalPolynomial2}
    m_{\beta_{f}}\left(\beta_{v}\right)^{-1} &= \left(\beta_{v}^{2n} - d\varpi_{\F}^{-1}\beta_{v}^{n} - c\varpi_{\F}^{-2}\right)^{-1} \\ \notag
    &= \frac{\varpi_{\F}^{2}}{v^{-2} - dv^{-1} - c} \\ \notag
    &= \frac{\varpi_{\F}^{2}}{f\left(v^{-1}\right)}.
\end{align}
Using \eqref{DimensionCount}, \eqref{LanglandsConstant2}, \eqref{MinimalPolynomial1}, \eqref{MinimalPolynomial2}, and Corollary \ref{UniformizerCorollary}, we may simplify \eqref{Rewrite3} as
\begin{align*}
& (-1)^{n-D(v)} \, q_{\F}^{n(1-2s)} \,  \lambda_{\E_{f}/\F}\left(\psi_{\F}\right)^{n} \, \xi\left(-\beta_{f}^{-n}\left(\frac{cv + \sigma_{f}}{vf\left(v^{-1}\right)}\right)\right) \, \eta\left(\frac{\varpi_{\F}^{2}}{f\left(v^{-1}\right)}\right) \\
&= (-1)^{n-D(v)} \, q_{\F}^{n(1-2s)}  \, \zeta^{-n} \,  \xi\left(-\frac{cv + \sigma_{f}}{vf\left(v^{-1}\right)}\right) \, \eta\left(\frac{\varpi_{\F}^{2}}{f\left(v^{-1}\right)}\right).
\end{align*}

Lastly, \cite[Proposition 29.2]{BH-7} and \cite[p.~2]{Harris} tell us that 
\[
\eta\left(\frac{\varpi_{\F}^{2}}{f\left(v^{-1}\right)}\right) = \varkappa_{\E_{v}/\F}\left(f\left(v^{-1}\right)\right) \,\omega_{(v,\,\varphi,\,\zeta')}\left(\frac{\varpi_{\F}^{2}}{f\left(v^{-1}\right)}\right)
\]
which completes the proof of Proposition \ref{GaloisPropositionMain}.
\end{proof}

\subsection{Langlands parameter}\label{LanglandsParameter}

In this subsection, we give the Langlands parameter  $\Phi_{(f,\,\chi,\,\zeta)}$ of an essentially tame middle supercuspidal representation of $\GL(2n, \F)$.~To do so, we explicitly describe the quasi-character $\xi_{(f,\,\chi,\,\zeta)}$ on $\E_{f}^{\times}$. 

We first decompose $\E_{f}^{\times}$ as 
\[
\E_{f}^{\times} = \left\langle \beta_{f} \right\rangle \times \mu_{\normalfont{\text{L}_{f}}}' \times \left(1 + \mathcal{P}_{\E_{f}}\right).
\]
At $\beta_{f}$, the quasi-character $\xi_{(f, \, \chi, \, \zeta)}$ is given by Corollary \ref{UniformizerCorollary}.~Moreover, the restriction of $\xi_{(f, \, \chi, \, \zeta)}$ to $1+\mathcal{P}_{\E_f}$ is given by Proposition \ref{1units}.

We recall the following result from \cite[Proposition 29.2]{BH-7} and \cite[p.~2]{Harris}:
\begin{equation}\label{CentralCharacterCondition}
    \det\left(\Phi_{(f, \, \chi, \, \zeta)}\right) = \omega_{(f, \, \chi, \, \zeta)} = \xi_{(f, \, \chi, \, \zeta)} \, \big |_{\, \F^{\times}} \otimes \varkappa_{\E_{f}/\F} \, .
\end{equation}
With \eqref{CentralCharacterCondition} and \eqref{D(v)}, we have the following proposition.~Let $v(u):= (cu)^{-1}$.
\begin{proposition}\label{klF}
    For $u \in \mathcal{O}_{\F}^{\times}$, 
    \[
    \xi_{(f, \, \chi, \, \zeta)}(1 + u\sigma_{f}) = 
    (-1)^{D(v(u))-n} \, \varkappa_{\E_f/\F}(-cf(cu)) \, \varkappa_{\E_{v(u)}/\F}(f(cu))\, \chi(1 + u\sigma_{f}).
    \]
\end{proposition}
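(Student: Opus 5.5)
The plan is to compare Corollary~\ref{CorollaryMain} with Proposition~\ref{GaloisPropositionMain}. By the local Langlands correspondence (\cite{Harris,Henniart1}), the automorphic twisted gamma factor equals the Galois one. After cancelling the common factors $\zeta^{-n}$, $\omega_{(v,\varphi,\zeta')}(\varpi_{\F}^2/f(v^{-1}))$ and $q_{\F}^{n(1-2s)}$, we obtain for every $v\in\mathcal{O}_{\F}^{\times}$ the identity
\[
\xi\!\left(-\frac{cv+\sigma_f}{vf(v^{-1})}\right)=(-1)^{D(v)-n}\,\varkappa_{\E_v/\F}\!\left(f(v^{-1})\right)\,\chi\!\left(-\frac{cv+\sigma_f}{vf(v^{-1})}\right).
\]
Here we use that $\varkappa_{\E_v/\F}$ has order dividing two, so its inverse equals itself. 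Note that the argument $-\frac{cv+\sigma_f}{vf(v^{-1})}$ lies in $\mathcal{O}_{\text{L}_f}^{\times}$: indeed $f(v^{-1})$ is a unit, since $\bar f$ is irreducible and so has no root in $k_{\F}$.

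The next step is to rewrite this argument as a scalar in $\F^{\times}$ times an element of the form $1+u\sigma_f$. We write $cv+\sigma_f = cv\,(1+(cv)^{-1}\sigma_f)$ and set $u=(cv)^{-1}$, so that $v=v(u)$; as $v$ ranges over $\mathcal{O}_{\F}^{\times}$, so does $u$. Then
\[
-\frac{cv+\sigma_f}{vf(v^{-1})} = -\frac{c}{f(v^{-1})}\,(1+u\sigma_f), \qquad v^{-1}=cu,
\]
so the scalar factor is $a:=-c/f(cu)\in\mathcal{O}_{\F}^{\times}$. Since $\xi$ and $\chi$ are multiplicative, the identity splits as
\[
\xi(a)\,\xi(1+u\sigma_f)=(-1)^{D(v(u))-n}\,\varkappa_{\E_{v(u)}/\F}(f(cu))\,\chi(a)\,\chi(1+u\sigma_f).
\]

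To evaluate $\xi(a)$ we use \eqref{CentralCharacterCondition}. It gives $\xi|_{\F^{\times}}=\omega_{(f,\chi,\zeta)}\,\varkappa_{\E_f/\F}$, again using that $\varkappa$ has order at most two. The central character $\omega_{(f,\chi,\zeta)}$ restricted to $\mathcal{O}_{\F}^{\times}$ is $\Lambda_{(f,\chi,\zeta)}|_{\mathcal{O}_{\F}^{\times}}$, and on $\mathcal{O}_{\F}^{\times}\subset\mathcal{O}_{\text{L}_f}^{\times}$ this is just $\chi$ (the factor $\psi_{\beta_f}$ is trivial on scalars in $1+\mathcal{P}_{\F}$, since $\tr\beta_f=0$). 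Hence $\xi(a)=\chi(a)\,\varkappa_{\E_f/\F}(a)$. Substituting, the factors $\chi(a)$ cancel, and we are left with
\[
\xi(1+u\sigma_f)=(-1)^{D(v(u))-n}\,\varkappa_{\E_f/\F}(-c/f(cu))\,\varkappa_{\E_{v(u)}/\F}(f(cu))\,\chi(1+u\sigma_f).
\]
Finally, $\varkappa_{\E_f/\F}(-c/f(cu))=\varkappa_{\E_f/\F}(-cf(cu))$ because the value of $\varkappa$ on $f(cu)^{-2}$ is trivial. This is the claimed formula.

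The main point requiring care is the identification $\omega_{(f,\chi,\zeta)}|_{\mathcal{O}_{\F}^{\times}}=\chi|_{\mathcal{O}_{\F}^{\times}}$. We must also confirm that the product of the $\psi_{\beta_f}$ contributions is trivial on $\mathcal{O}_{\F}^{\times}$, and that the $\chi$ appearing in Corollary~\ref{CorollaryMain} is the inflation to $\mathcal{O}_{\text{L}_f}^{\times}$, so that it can be evaluated on $a$ and on $1+u\sigma_f$ separately. Everything else is bookkeeping of signs and quadratic characters.
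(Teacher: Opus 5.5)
Your proposal is correct and follows the paper's own proof. You equate Corollary~\ref{CorollaryMain} with Proposition~\ref{GaloisPropositionMain}, factor the argument as $-\frac{c}{f(cu)}\,(1+u\sigma_f)$ with $v(u)^{-1}=cu$, and then use \eqref{CentralCharacterCondition} together with the fact that $\varkappa_{\E_f/\F}$ has order dividing two. You have also made explicit the identification $\omega_{(f,\chi,\zeta)}|_{\mathcal{O}_{\F}^{\times}}=\chi|_{\mathcal{O}_{\F}^{\times}}$, which the paper leaves implicit.
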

\begin{proof}
    Matching the gamma factors in Corollary \ref{CorollaryMain} and Proposition \ref{GaloisPropositionMain}, which are equal by \cite{Harris,Henniart1}, tells us that
    \[
    \xi_{(f, \, \chi, \, \zeta)}\left(-\frac{cv(u) + \sigma_{f}}{v(u)f\left(v(u)^{-1}\right)}\right) = (-1)^{D(v(u))-n} \, \varkappa_{\E_{v(u)}/\F}\left(f\left(v(u)^{-1}\right)\right) \, \chi\left(-\frac{cv(u) + \sigma_{f}}{v(u)f\left(v(u)^{-1}\right)}\right).
    \]
Since~$f(v(u)^{-1})=f(cu)\in\F$, Proposition~\ref{klF} now follows from~\eqref{CentralCharacterCondition} and the fact that~$\varkappa_{\E_f/\F}$ is at most quadratic. 
\end{proof}

\begin{remark}
    \normalfont 
At this stage, it is not obvious that the conjugacy class of the admissible pair $\left(\E_f/\F, \, \xi_{(f, \, \chi, \, \zeta)} \right)$ depends only on $\bar{f}$, and not on the choice of lift $f$, even though it must do from the fact that the Langlands correspondence is well-defined.~After simplification (using the appendix), this becomes clear.
\end{remark}
\begin{proof}[Proof of Theorem \ref{TheoremMain}]
    Firstly, (1) follows from Corollary~\ref{UniformizerCorollary} (note that the uniformizer in the statement of Theorem~\ref{TheoremMain} is~$\beta_f^{-1}$).~Similarly, (3) follows from Proposition~\ref{1units}, on noting that~$\tr_{\Mat(2n \times 2n, \, \F)/\F}$ coincides with~$\tr_{\E_f/\F}$ on~$\E_f$.
    
    For~(2), we note first that~\eqref{CentralCharacterCondition}, together with the fact that~$\varkappa_{\E_{f}/{\normalfont \text{L}{f}}}$ is trivial on $\mathcal{O}_{\F}^{\times}$, implies that it is enough to check the desired equality on a set of representatives for~$\mu'_{\L_f}/\mu'_{\F}$, which we take to be~$\{\sigma_{f}, \, 1+u\sigma_{f} : u\in\mu'_{\F}\}$. In Proposition~\ref{PropositionAppendix2} below, we show that, for $u\in\mathcal{O}_{\F}^{\times}$ we have
\[
\left(-1\right)^{D(v(u))-n}
\varkappa_{\E_{v(u)}/\F}\left(f(cu)\right)
=
\varkappa_{\E_f/\L_f}
\left(1+u\sigma_f\right).
\]
Moreover, in Lemma~\ref{LemmaAppendix4}, we see that~$\varkappa_{\E_f/\F}$ is unramified.~Substituting these into Proposition~\ref{klF} gives the desired formula
\[
\xi_{(f, \, \chi, \, \zeta)}(1 + u\sigma_{f}) = \varkappa_{\E_{f}/{\normalfont \text{L}_{f}}}(1 + u\sigma_{f}) \, \chi(1 + u\sigma_{f}).
\]
It remains only to check for the representative~$\sigma_{f}$, which is done in Proposition~\ref{PropositionAppendix3}, thus completing the proof of Theorem \ref{TheoremMain}.
\end{proof}

\appendix
\section{}

Let $p$ be odd and $\overline{\varepsilon}_{\F}$ denote the unique non-trivial quadratic character of $k_{\F}^{\times}$, given by
\[
\overline{\varepsilon}_{\F}(a)
=
a^{(q_{\F}-1)/2}.
\]
We denote its inflation to $\mathcal{O}_{\F}^{\times}$ by $\varepsilon_{\F}$ where
\[
\varepsilon_{\F}(x) = \overline{\varepsilon}_{\F}(\overline{x})
\]
with $\overline{x}$ being the image of $x$ in $k_{\F}^{\times}$.

Observe that the quadratic characters are compatible with norm maps along unramified extensions.~More precisely, let $\text{L}/\F$ be unramified.~Then
\[
\overline{\varepsilon}_{\F}\circ N_{k_{\L}/k_{\F}}
=
\overline{\varepsilon}_{\L}.
\]
Indeed, for $a\in k_{\L}^{\times}$, we have
\[
\overline{\varepsilon}_{\F}
\left(N_{k_{\L}/k_{\F}}(a)\right) = 
\left(
a^{(q_{\L}-1)/(q_{\F}-1)}
\right)^{(q_{\F}-1)/2} = \overline{\varepsilon}_{\L}(a).
\]
Since the norm map is compatible with reduction modulo the maximal ideal,
it follows that
\begin{equation}\label{EquationAppendix0}
\varepsilon_{\F}\circ N_{\L/\F}
=
\varepsilon_{\L}
\end{equation}
as characters of $\mathcal{O}_{\L}^{\times}$. 

For $\E/\F$ a finite tame extension, the following lemma gives some properties of $\varkappa_{\E/\F}$.
\begin{lemma}\label{LemmaAppendix1}
    Let $\E/\F$ be a finite tame extension.
    \begin{enumerate}
    \item The Langlands constant $\lambda_{\E/\F}(\psi_{\F})$ is a fourth root of unity such that
        \[
            \lambda_{\E/\F}(\psi_{\F})^{2} = \varkappa_{\E/\F}(-1).
        \]
    \item Let $\F \subset \L \subset \E$ be a tower of extensions with $[\E: \L] = m$.~Then
        \[
             \varkappa_{\E/\F} = \varkappa_{\L/\F}^{m} \cdot \left(\varkappa_{\E/\L} \, \big |_{\, \F^{\times}}\right).
        \]
    \item Let $\E/\F$ be a totally ramified extension of degree $n$.~Then
        \[
            \varkappa_{\E/\F} \, \big|_{\, \mathcal{O}_{\F}^{\times}}
            = \epsilon_{\F}^{n-1} . 
        \]
    \end{enumerate}
\end{lemma}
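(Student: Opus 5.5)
Part (1) is a standard consequence of the functional equation for the induced representation and its determinant. Set $\rho=\Ind_{\E/\F}\mathbbm{1}_{\E^\times}$, so $\lambda_{\E/\F}(\psi_\F)=\epsilon(s,\rho,\psi_\F)/\epsilon(s,\mathbbm{1}_{\E^\times},\psi_\E)$ by \eqref{LanglandsConstant}. The plan is to use the identity $\epsilon(s,\sigma,\psi)\epsilon(1-s,\check\sigma,\psi)=\det\sigma(-1)$. The representation $\rho$ is self-dual, and the trivial character of $\E^\times$ is self-dual with $\det=1$. Multiplying the two relations (at $s$ and at $1-s$) then gives $\lambda_{\E/\F}(\psi_\F)^2=\det\rho(-1)=\varkappa_{\E/\F}(-1)$ by \eqref{eqn:varkappa}. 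Since $\varkappa_{\E/\F}$ has order dividing two, $\lambda^4=1$. The only care needed is to use that $\lambda$ is independent of $s$, which follows from the tower/inductivity in degree zero. Alternatively one cites \cite[30.4]{BH-7} directly.

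For (2), I would use the transfer/determinant formula for induced representations. For a representation $\sigma$ of $\W_\L$ of dimension $r$, one has $\det\Ind_{\L/\F}\sigma=\varkappa_{\L/\F}^{\,r}\cdot(\det\sigma)|_{\F^\times}$ (see \cite[29.2]{BH-7}). Apply this with $\sigma=\Ind_{\E/\L}\mathbbm{1}$, which has dimension $r=m$ and determinant $\varkappa_{\E/\L}$. Transitivity of induction then gives $\varkappa_{\E/\F}=\varkappa_{\L/\F}^m\cdot\varkappa_{\E/\L}|_{\F^\times}$. Here restriction to $\F^\times$ corresponds to the transfer $\W_\F^{ab}\to\W_\L^{ab}$, i.e.\ the inclusion $\F^\times\subset\L^\times$.

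Part (3) is the main step. Since $\E/\F$ is tame and totally ramified of degree $n$, it is generated by an $n$th root $\varpi_\E$ of $u\varpi_\F$ for some unit $u$. Let $\F'$ be the unramified extension containing $\mu_n$. Then $\E\F'/\F'$ is Kummer, hence Galois with cyclic group, and $\E\F'/\F$ has Galois closure over $\F$. I would compute $\varkappa_{\E/\F}|_{\mathcal O_\F^\times}$ as the sign of the permutation action of inertia on $\W_\F/\W_\E$, i.e.\ on the $n$ roots $\zeta^i\varpi_\E$ of $X^n-u\varpi_\F$. By local class field theory, a unit $a\in\mathcal O_\F^\times$ acts on $\E\F'$ through the tame inertia character, i.e.\ via $\zeta\mapsto \bar a^{(q_\F-1)/n}$-type multiplication on the roots. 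Concretely, the image of $\mathcal O_\F^\times$ in tame inertia modulo $\W_{\E\F'}$ is cyclic of order $n$ and acts on the roots by an $n$-cycle. Hence $\varkappa(a)=\operatorname{sgn}(n\text{-cycle})^{k}=((-1)^{n-1})^k$, where $\bar a$ maps to the $k$-th power of a generator. For $n$ odd this gives $1=\varepsilon_\F^{n-1}$. For $n$ even (with $p$ odd), the character $a\mapsto(-1)^k$ is the unique quadratic character of $k_\F^\times$, i.e.\ $\varepsilon_\F$. In both cases the result is $\varepsilon_\F^{n-1}$. The obstacle is bookkeeping: I must ensure that the image of $a$ in the cyclic group $\mu_n$ is surjective, so that the parity of $k$ is $\varepsilon_\F(a)$. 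This holds because the reciprocity map sends $\mathcal O_\F^\times$ onto $\Gal(\E\F'/\F')$ via $a\mapsto \bar a^{-(q_\F-1)/n}$ composed through norms, and an element of $\mu_n$ is a square in $\mu_n$ exactly when its preimage in $k_\F^\times$ is a square (when $n$ is even).
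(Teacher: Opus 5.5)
Your arguments for (1) and (2) are correct. The paper simply cites \cite[(30.4.3)]{BH-7} and \cite[(10.1.3)]{BF}, and your derivations are the standard proofs of those facts. For (1) you apply the functional equation $\epsilon(s,\sigma,\psi)\epsilon(1-s,\check\sigma,\psi)=\det\sigma(-1)$ to the self-dual representation $\Ind_{\E/\F}\mathbbm{1}_{\E^{\times}}$ and to $\mathbbm{1}_{\E^{\times}}$. For (2) you use the determinant-of-induction formula \cite[29.2]{BH-7} together with transitivity of induction.

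For (3) your route is genuinely different. The paper quotes \cite[Proposition 10.1.6]{BF}. For $n$ odd, $\varkappa_{\E/\F}$ is unramified. For $n$ even, $\varkappa_{\E/\F}=\varkappa_{\E/\K}|_{\F^{\times}}$, where $\K$ is the unique subfield of index two, $\varkappa_{\E/\K}$ is the ramified level-zero quadratic character, and $k_{\K}=k_{\F}$. You instead compute $\det\Ind_{\E/\F}\mathbbm{1}$ directly, as the sign of the permutation action on the roots of $X^n-u\varpi_{\F}$. Inertia acts there through the cyclic Kummer group $\Gal(\E\F'/\F')$ by powers of an $n$-cycle. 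Your argument is self-contained and explains the exponent $n-1$ as the sign of an $n$-cycle. The paper's argument is shorter but rests entirely on the Bushnell--Fr\"ohlich structure result.

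One step of your (3) is stated incorrectly and needs repair. You claim that the reciprocity map sends $\mathcal{O}_{\F}^{\times}$ onto $\Gal(\E\F'/\F')$ via something like $a\mapsto \bar a^{-(q_{\F}-1)/n}$. This fails in general:
\begin{itemize}
\item $\Gal(\E\F'/\F)$ is usually non-abelian, since a lift of Frobenius acts on $\Gal(\E\F'/\F')\cong\mu_n$ by $q_{\F}$-th powers.
\item The Artin map of $\F$ lands only in the abelianization, where the image of $\mathcal{O}_{\F}^{\times}$ has order $\gcd(n,q_{\F}-1)$. This can be smaller than $n$.
\item The exponent $(q_{\F}-1)/n$ need not be an integer.
\end{itemize}

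What you actually need is only that, for $n$ even, $\varkappa_{\E/\F}|_{\mathcal{O}_{\F}^{\times}}$ is the non-trivial quadratic character of $k_{\F}^{\times}$. This follows cleanly. The sign is a character of $\Gal(\E\F'/\F)$, so it factors through the abelianization. By local class field theory, $\mathcal{O}_{\F}^{\times}$ maps onto the image there of the inertia group $\Gal(\E\F'/\F')$. A generator of that group has sign $(-1)^{n-1}=-1$. Hence $\varkappa_{\E/\F}|_{\mathcal{O}_{\F}^{\times}}$ is non-trivial and quadratic, and it is trivial on $1+\mathcal{P}_{\F}$ by tameness. Since $k_{\F}^{\times}$ is cyclic, it equals $\varepsilon_{\F}$.

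Alternatively, write $a=N_{\F'/\F}(b)$ with $b\in\mathcal{O}_{\F'}^{\times}$. The tame symbol over $\F'$ gives the action $\bar b^{\pm(q_{\F'}-1)/n}$, and \eqref{EquationAppendix0} finishes the argument. With either correction your proof of (3) is complete.
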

\begin{proof}
    Parts (1) and (2) of Lemma \ref{LemmaAppendix1} follow from \cite[(30.4.3), p.~195]{BH-7} and \cite[(10.1.3)]{BF} respectively.

    To prove (3), we first suppose that $n$ is odd.~By \cite[Proposition 10.1.6 (i)]{BF}, $\varkappa_{\E/\F}$ is unramified.~Next, suppose that $n$ is even.~By
    \cite[Proposition 10.1.6 (ii)]{BF}, there is a unique
    intermediate field
        \[
            \F \subset \K \subset \E 
        \]
    such that $\E/\K$ is quadratic, and
        \[
            \varkappa_{\E/\F}
            =
            \varkappa_{\E/\K} \, \big|_{\, \F^{\times}}.
        \]

    From \cite[Subsection 34.3]{BH-7}, we have that $\varkappa_{\E/\K}$ is a non-trivial ramified
    quadratic character of level-zero.~Consequently, its restriction to $\mathcal{O}_{\K}^{\times}$ is equal to  $\varepsilon_{\K}$.~Because $k_{\K} = k_{\F}$, the restriction of $\varepsilon_{\K}$ to $\mathcal{O}_{\F}^{\times}$ is precisely $\varepsilon_{\F}$.~Restricting $\varkappa_{\E/\F}$ to $\mathcal{O}_{\F}^{\times}$ therefore gives
        \[
            \varkappa_{\E/\F} \, \big|_{\, \mathcal{O}_{\F}^{\times}}
            =
            \varepsilon_{\F}.
        \]
    This proves (3).
\end{proof}

The following proposition translates the relevant double-coset count into a question about the factorization of a minimal polynomial.

\begin{proposition}\label{PropositionAppendix1}
Let $\overline{\F}$ be a separable closure of $\F$, and let
$\L/\F$ and $\E/\F$ be finite separable extensions contained in $\overline{\F}$.~Furthermore, suppose there exists $\beta \in \E$ such that $\E = \F[\beta]$, and let $m_{\beta} \in \F[X]$ denote the minimal polynomial of $\beta$ over $\F$.~Then 
\[
\left| \, \bigdoublecoset{\W_{\L}}{\W_{\F}}{\W_{\E}} \, \right| = \#\left\{ \, 
\text{irreducible factors of $m_{\beta}$ over $\L$} \,  \right\}.
\]
\end{proposition}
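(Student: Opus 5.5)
The plan is to identify both sides with orbits of $\W_{\L}$ on the set of $\F$-embeddings of $\E$ into $\overline{\F}$, equivalently on the roots of $m_\beta$ in $\overline{\F}$.

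First, since $\E=\F[\beta]$ and $\E/\F$ is separable, $m_\beta$ has $[\E:\F]$ distinct roots in $\overline{\F}$. The group $\W_{\F}$ acts on $\overline{\F}$ through $\Gal(\overline{\F}/\F)$, and its image there is dense. Consider the map $\W_{\F}\to\{\text{roots of }m_\beta\}$ given by $g\mapsto g(\beta)$. Its fibres are exactly the left cosets $g\W_{\E}$, because $g$ fixes $\beta$ if and only if $g$ fixes $\E=\F[\beta]$. It is surjective: $\Gal(\overline{\F}/\F)$ acts transitively on the roots of the irreducible polynomial $m_\beta$, the stabilisers are open, and $\W_{\F}$ is dense. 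Hence we get a $\W_{\F}$-equivariant bijection $\W_{\F}/\W_{\E}\cong\{\text{roots of }m_\beta\}$.

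Next, restrict this bijection to the left action of $\W_{\L}$. The double cosets $\W_{\L}\backslash\W_{\F}/\W_{\E}$ then correspond bijectively to the $\W_{\L}$-orbits on the roots of $m_\beta$. By density again, $\W_{\L}$-orbits coincide with $\Gal(\overline{\F}/\L)$-orbits: the action factors through a finite Galois quotient, and $\W_{\L}$ surjects onto it.

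Finally, the $\Gal(\overline{\F}/\L)$-orbits on the roots of the separable polynomial $m_\beta$ are exactly the root sets of its monic irreducible factors over $\L$. Each irreducible factor is separable, its roots form a single Galois orbit, and distinct factors share no roots. This gives the required count.

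The only mildly delicate step is justifying the passage from $\W$ to the full Galois group, that is, transitivity and the orbit statements. This reduces to the density of $\W_{\F}$ in $\Gal(\overline{\F}/\F)$ together with the openness of stabilisers of algebraic elements, so I expect no real obstacle.
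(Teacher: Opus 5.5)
Your proposal is correct and follows essentially the same route as the paper: identify $\W_{\F}/\W_{\E}$ with the roots of $m_\beta$, so that double cosets become $\W_{\L}$-orbits; pass to $\Gal(\overline{\F}/\L)$-orbits using the density of $\W_{\L}$; and match those Galois orbits with the root sets of the irreducible factors. The only difference is cosmetic: you prove $\W_{\F}/\W_{\E}\cong\{\text{roots of } m_\beta\}$ directly by a density-plus-open-stabiliser argument, whereas the paper goes through $\Omega_{\F}/\Omega_{\E}$ and $\operatorname{Hom}_{\F}(\E,\overline{\F})$ and cites Bushnell--Henniart for the comparison of $\W_{\F}/\W_{\E}$ with $\Omega_{\F}/\Omega_{\E}$.
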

\begin{proof}
Set $\Omega_{\F} := \Gal\left(\overline{\F}/\F\right)$, $\Omega_{\E}:=\Gal\left(\overline{\F}/\E\right)$, and $\Omega_{\L}:=\Gal\left(\overline{\F}/\L\right)$.~From \cite[Proposition~2.1(b)]{Milne}, we have the following bijection:
\[
\begin{array}{rcl}
\operatorname{Hom}_{\F}\left(\E, \, \overline{\F}\right)
& \longleftrightarrow &
\left\{ \, 
\text{roots of $m_{\beta}$ in $\overline{\F}$} \, 
\right\}
\\[4pt]
\tau
& \mapsto &
\tau(\beta).
\end{array}
\]
On the other hand, \cite[Proposition~7.4]{Milne} tells us that every element of $\operatorname{Hom}_{\F}\left(\E, \, \overline{\F}\right)$ extends to an element of $\Omega_{\F}$.~Since two elements of $\Omega_{\F}$ have the same restriction to $\E$ if and only if they lie in the same right coset of $\Omega_{\E}$, the restriction map induces a bijection
\[
\begin{array}{rcl}
\bigslant{\Omega_{\F}}{\Omega_{\E}}
& \longleftrightarrow &
\operatorname{Hom}_{\F}\left(\E,\overline{\F}\right)
\\[4pt]
\sigma \, \Omega_{\E}
& \mapsto &
\sigma\big|_{\E}.
\end{array}
\]

Using \cite[Proposition~28.5(1)(c)]{BH-7}, we see that we also have a bijection
\[
\bigslant{\W_{\F}}{\W_{\E}}
\longleftrightarrow
\bigslant{\Omega_{\F}}{\Omega_{\E}}.
\]
This bijection is equivariant with respect to the natural left action of $\W_{\L}$.~Putting these together, we see that the set of double cosets
\[
\bigdoublecoset{\W_{\L}}{\W_{\F}}{\W_{\E}}
\]
is identified with the set of $\W_{\L}$-orbits on the roots of $m_{\beta}$.

Let $\K$ be the splitting field of $m_{\beta}$ over
$\L$.~Since $\E/\F$ is separable, the polynomial $m_{\beta}$ is separable.~It follows that $\K/\L$ is a finite Galois extension.~From \cite[Proposition~7.7]{Milne}, we have that the action of $\Omega_{\L}$ on the roots of $m_{\beta}$ factors through the continuous surjective restriction map
\[
\begin{array}{rcl}
\Omega_{\L}
& \longrightarrow &
\Gal\left(\K/\L\right)
\\[4pt]
\sigma
& \mapsto &
\sigma\big|_{\K}.
\end{array}
\]

By \cite[Section~28.4]{BH-7}, the subgroup $\W_{\L}$ is dense in $\Omega_{\L}$.~Its image under the restriction map above is therefore dense in the finite discrete group $\Gal\left(\K/\L\right)$, and hence is equal to the whole group.~Consequently, $\W_{\L}$ and $\Omega_{\L}$ induce the same permutations of the roots of $m_{\beta}$ and therefore have the same orbits.

Finally, by \cite[Proposition~4.26]{Milne}, the Galois orbits on the roots of a separable polynomial are precisely the sets of roots of its irreducible factors.~Choosing each irreducible factor to be monic, we conclude that
\[
\left| \, \bigdoublecoset{\W_{\L}}{\W_{\F}}{\W_{\E}} \, \right| = \#\left\{ \, 
\text{irreducible factors of $m_{\beta}$ over $\L$} \,  \right\}.  \qedhere
\]
\end{proof}

For a polynomial $f \in k_{\F}[X]$, let $\disc(f)$ denote the discriminant of $f$.~We shall use the following two facts.
\begin{lemma}\label{LemmaAppendix2}
Let $f \in k_{\F}[X]$ be a separable polynomial of degree $n$.
\begin{enumerate}
\item Suppose $X \nmid f$.~For every $v \in k_{\F}^{\times}$ and $N \geq 1$ coprime to $p$, the polynomial $f\left(v X^{N}\right)$ is separable over $k_{\F}$.

\item\label{LemmaAppendix2.2} Suppose that $p$ is odd.~Let $r$ be the number of monic irreducible factors of $f$ over $k_{\F}$.~Then
\[
\overline{\varepsilon}_{\F}
\left(\disc(f)\right)
=
\left(-1\right)^{n-r}.
\]
\end{enumerate}
\end{lemma}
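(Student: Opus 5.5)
We treat the two parts separately; both are elementary statements about polynomials over the finite field $k_{\F}$.

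For (1), we use the criterion that a polynomial $g$ over a field is separable if and only if $\gcd(g,g')=1$; equivalently, $g$ has no repeated root in an algebraic closure $\overline{k}_{\F}$. Put $g(X)=f(vX^{N})$, so that
\[
g'(X)=f'(vX^{N})\cdot NvX^{N-1}.
\]
Let $\alpha\in\overline{k}_{\F}$ be a common root of $g$ and $g'$. First, $\alpha\neq 0$: indeed $g(0)=f(0)\neq 0$ because $X\nmid f$. Next, since $p\nmid N$ and $v\neq 0$, the factor $NvX^{N-1}$ does not vanish at $\alpha$. Hence $\gamma:=v\alpha^{N}$ satisfies $f(\gamma)=f'(\gamma)=0$, which contradicts the separability of $f$. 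So $g$ is separable.

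For (2), we use Stickelberger's theorem in its finite-field form; this is the main ingredient. Let $\alpha_{1},\dots,\alpha_{n}\in\overline{k}_{\F}$ be the roots of $f$, which are distinct, and set
\[
\delta=\prod_{i<j}(\alpha_{i}-\alpha_{j}).
\]
Then $\delta^{2}=\disc(f)$ up to the leading-coefficient factor $a^{2n-2}$, which is a square and so does not affect $\overline{\varepsilon}_{\F}$. Since $p$ is odd, $\overline{\varepsilon}_{\F}(\disc f)=1$ exactly when $\disc f$ is a square in $k_{\F}$, that is, exactly when $\delta\in k_{\F}$. Let $\mathrm{Frob}(x)=x^{q_{\F}}$. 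Then $\mathrm{Frob}(\delta)=\operatorname{sgn}(\pi)\,\delta$, where $\pi$ is the permutation of the roots induced by Frobenius. Because $\delta\neq 0$ and $p\neq 2$, this gives $\overline{\varepsilon}_{\F}(\disc f)=\operatorname{sgn}(\pi)$. More directly, $\delta^{q_{\F}-1}=\operatorname{sgn}(\pi)$, and $\delta^{q_{\F}-1}=(\delta^{2})^{(q_{\F}-1)/2}$ is the defining formula for $\overline{\varepsilon}_{\F}$.

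It remains to compute $\operatorname{sgn}(\pi)$. Frobenius permutes the roots of each monic irreducible factor of degree $d$ as a single $d$-cycle, since these roots are $\gamma,\gamma^{q_{\F}},\dots$ and they are distinct. So $\pi$ is a product of $r$ disjoint cycles whose lengths $d_{1},\dots,d_{r}$ sum to $n$. Therefore
\[
\operatorname{sgn}(\pi)=\prod_{i=1}^{r}(-1)^{d_{i}-1}=(-1)^{n-r}.
\]

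The only delicate point is bookkeeping: one must check that the normalisation of $\disc$ (the sign convention and the leading coefficient) changes $\delta^{2}$ only by a square. We will verify this by assuming $f$ monic, which costs nothing for the value of $\overline{\varepsilon}_{\F}$. One also uses that $\disc(f)\in k_{\F}^{\times}$ because $f$ is separable, so that $\overline{\varepsilon}_{\F}(\disc f)$ is defined.
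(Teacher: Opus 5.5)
Your proposal is correct and follows the paper's route. For (1), you carry out the ``straightforward computation'' the paper leaves to the reader: a common root $\alpha$ of $g$ and $g'$ must be nonzero, so $v\alpha^{N}$ would be a repeated root of $f$. For (2), you give the standard Frobenius-sign proof of Stickelberger's theorem, which is exactly the result the paper cites as Swan's Corollary~1, and your normalisation remark is sound because $\disc(f)=a^{2n-2}\prod_{i<j}(\alpha_i-\alpha_j)^2$ differs from $\delta^{2}$ only by a square.
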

\begin{proof}
The proof of (1) follows from a straightforward computation, while (2) follows from \cite[Corollary~1]{Swan}.
\end{proof}

For later use, we record a discriminant formula for polynomials of the following form.
\begin{lemma}\label{LemmaAppendix3}
Let $R$ be a commutative ring and
\[
P(Y)=Y^{2n}+AY^{n}+B,
\]
with $A, B \in R$.~Then
\[
\disc(P)
=
n^{2n}B^{n-1}\left(A^{2}-4B\right)^{n}.
\]
\end{lemma}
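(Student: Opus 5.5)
The plan is to use the multiplicativity of the discriminant under composition with $Y^n$. Set $g(X)=X^2+AX+B$, so that $P(Y)=g(Y^n)$. The claimed formula is a polynomial identity in $A,B$ with integer coefficients. It therefore suffices to prove it in the universal case $R=\mathbb{Z}[A,B]$, and after that in any field of characteristic zero containing the roots. Specialization then gives the result for arbitrary $R$, since the discriminant of a monic polynomial is a universal integer polynomial in its coefficients.

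Work over an algebraically closed field of characteristic zero, and first assume $B\neq 0$ and $A^2\neq 4B$. Factor $g(X)=(X-r_1)(X-r_2)$ with $r_1r_2=B$ and $(r_1-r_2)^2=A^2-4B$. I will use the formula
\[
\disc(P)=(-1)^{2n(2n-1)/2}\prod_{P(y)=0}P'(y)=(-1)^{n}\prod_{P(y)=0}P'(y).
\]
We have $P'(Y)=nY^{n-1}g'(Y^n)$. The roots of $P$ are the $n$-th roots $y$ of $r_1$ and of $r_2$.

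For the $n$ roots $y$ with $y^n=r_1$, the product of $y^{n-1}$ is $(\prod y)^{n-1}=((-1)^{n-1}r_1)^{n-1}=r_1^{n-1}$, since $(n-1)^2\equiv n-1 \pmod 2$. Also $g'(r_1)=r_1-r_2$. This root set therefore contributes $n^n r_1^{n-1}(r_1-r_2)^n$. Similarly, the roots over $r_2$ contribute $n^n r_2^{n-1}(r_2-r_1)^n$. Multiplying the two contributions gives
\[
n^{2n}B^{n-1}(-1)^n(r_1-r_2)^{2n}=(-1)^n n^{2n}B^{n-1}(A^2-4B)^n,
\]
and the extra sign $(-1)^n$ cancels this. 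This yields $n^{2n}B^{n-1}(A^2-4B)^n$.

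Both sides are polynomials in $A,B$ that agree on a Zariski-dense set, so they agree identically; the degenerate cases are covered by this density. The main point requiring care is the sign bookkeeping, namely the normalization $\disc(P)=(-1)^{d(d-1)/2}\operatorname{Res}(P,P')$ for monic $P$ of degree $d=2n$ and the sign of the product of the $n$-th roots. An alternative check is the quick case $n=1$, which recovers $A^2-4B$.
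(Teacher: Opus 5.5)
Your argument is essentially the paper's. Both compute $\disc(P)=(-1)^n\Res(P,P')=(-1)^n\prod_{P(y)=0}P'(y)$ using $P'(Y)=nY^{n-1}(2Y^n+A)$. Your reduction to the generic case over $\mathbb{Z}[A,B]$ makes explicit what the paper leaves implicit for a general ring $R$.

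However, one step is false as written, and it is precisely in the sign bookkeeping you singled out. Over the fibre $y^n=r_1$ you have
$\prod y^{n-1}=\bigl((-1)^{n-1}r_1\bigr)^{n-1}=(-1)^{(n-1)^2}r_1^{n-1}=(-1)^{n-1}r_1^{n-1}$.
The congruence $(n-1)^2\equiv n-1 \pmod 2$ that you cite gives exactly this sign, not $1$. For instance, when $n=2$ the roots are $\pm\sqrt{r_1}$ and their product is $-r_1$. So each fibre actually contributes $(-1)^{n-1}n^n r_i^{n-1}(r_i-r_j)^n$. The same factor $(-1)^{n-1}$ occurs in both fibres, so it squares to $1$ and your final formula survives.

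The cleaner fix is what the paper's one-line resultant computation amounts to. Take the product of $y^{n-1}$ over all $2n$ roots at once: it equals $\bigl(\prod_{P(y)=0}y\bigr)^{n-1}=B^{n-1}$. This gives $\Res(P,P')=n^{2n}B^{n-1}(4B-A^2)^n$ directly, with no per-fibre signs to track.
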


\begin{proof}
Differentiating $P$ gives
\[
P'(Y)
=
nY^{n-1}\left(2Y^{n}+A\right).
\]
The resultant formula then gives
\[
\Res\left(P,P'\right)
=
n^{2n}B^{n-1}\left(4B-A^{2}\right)^{n}.
\]
Since $P$ is monic of degree $2n$, it follows from
\cite[Section~3.3.2]{Cohen93} that
\begin{align*}
\disc(P)
&=
\left(-1\right)^{n}
\Res\left(P,P'\right) \\
&=
\left(-1\right)^{n}
n^{2n}B^{n-1}\left(4B-A^{2}\right)^{n} \\
&=
n^{2n}B^{n-1}\left(A^{2}-4B\right)^{n}. \qedhere
\end{align*}
\end{proof}

Using the preceding results, we now establish identities involving the quasi-characters $\varkappa_{\E_f/\F}$, $\varkappa_{\E_{v(u)}/\F}$, and $\varkappa_{\E_f/\L_f}$.
\begin{lemma}\label{LemmaAppendix4}
    The quasi-character $\varkappa_{\E_f/\F}$ is unramified.
\end{lemma}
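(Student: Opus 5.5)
We use the tower $\F \subset \L_f \subset \E_f$ and Lemma~\ref{LemmaAppendix1}(2) with $m=[\E_f:\L_f]=n$:
\[
\varkappa_{\E_f/\F} = \varkappa_{\L_f/\F}^{\,n}\cdot\left(\varkappa_{\E_f/\L_f}\,\big|_{\F^{\times}}\right).
\]
The first factor is a power of $\varkappa_{\L_f/\F}$. Since $\L_f/\F$ is unramified, $\Ind_{\L_f/\F}\mathbbm{1}$ is a sum of unramified characters, so its determinant $\varkappa_{\L_f/\F}$ is unramified (it is the unramified quadratic character). It therefore remains to show that $\varkappa_{\E_f/\L_f}$ restricted to $\mathcal{O}_{\F}^{\times}$ is trivial.

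Since $\E_f/\L_f$ is totally ramified of degree $n$ and tame, Lemma~\ref{LemmaAppendix1}(3) applied over $\L_f$ gives
\[
\varkappa_{\E_f/\L_f}\big|_{\mathcal{O}_{\L_f}^{\times}} = \varepsilon_{\L_f}^{\,n-1}.
\]
If $n$ is odd this is trivial and we are done. If $n$ is even, we must show that $\varepsilon_{\L_f}$ is trivial on $\mathcal{O}_{\F}^{\times}$. Here we use \eqref{EquationAppendix0}: for $x\in\mathcal{O}_{\F}^{\times}$,
\[
\varepsilon_{\L_f}(x)=\varepsilon_{\F}(N_{\L_f/\F}(x))=\varepsilon_{\F}(x^2)=1.
\]
Equivalently, every element of $k_{\F}^{\times}$ is a square in the quadratic extension $k_{\L_f}$.

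This uses that $p$ is odd, which holds since $p\nmid 2n$ in the tame setting of the appendix. If $p$ were $2$, the characters involved would be trivial anyway on tame level-zero units.

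Combining the two factors, $\varkappa_{\E_f/\F}$ is trivial on $\mathcal{O}_{\F}^{\times}$, i.e.\ unramified. The only point needing care is applying Lemma~\ref{LemmaAppendix1}(3) with base field $\L_f$ instead of $\F$, which is legitimate because that lemma is stated for an arbitrary base field with tame totally ramified extension; there is no real obstacle.
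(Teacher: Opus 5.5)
Your proof is correct and follows essentially the same route as the paper. It uses the tower formula of Lemma~\ref{LemmaAppendix1}(2), the unramifiedness of $\varkappa_{\L_f/\F}$, Lemma~\ref{LemmaAppendix1}(3) over $\L_f$, and then triviality of $\varepsilon_{\L_f}$ on $\mathcal{O}_{\F}^{\times}$. The differences are cosmetic: you get that last step from \eqref{EquationAppendix0} and $N_{\L_f/\F}(x)=x^2$ rather than the paper's direct computation $\overline{x}^{(q_{\F}^2-1)/2}=1$, and your aside about $p=2$ is unnecessary because the appendix assumes $p$ odd.
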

\begin{proof}
Recall that $\L_f/\F$ is a tame unramified quadratic extension and that $\E_f/\L_f$ is a tame totally ramified extension of degree $n$.~By (2) of Lemma~\ref{LemmaAppendix1}, we have
\[
\varkappa_{\E_f/\F} =
\varkappa_{\L_f/\F}^{n} \cdot 
\left(
\varkappa_{\E_f/\L_f} \,
\big|_{ \, \F^{\times}}
\right).
\]
Since $\L_f/\F$ is unramified, it follows from
\cite[Proposition 10.1.5]{BF} that
$\varkappa_{\L_f/\F}$ is unramified.

It therefore remains to show that $\varkappa_{\E_f/\L_f}$ is trivial on $\mathcal{O}_{\F}^{\times}$.~By (3) of
Lemma~\ref{LemmaAppendix1},
\[
\varkappa_{\E_f/\L_f}
\, \big|_{ \, \mathcal{O}_{\L_f}^{\times}}
=
\varepsilon_{\L_f}^{\,n-1}.
\]
Thus, for $x\in\mathcal{O}_{\F}^{\times}$,
\[
\varepsilon_{\L_f}(x) =
\overline{x}^{\left(q_{\L_f}-1\right)/2} =
\left(\overline{x}^{q_{\F}-1}
\right)^{\left(q_{\F}+1\right)/2} = 1.
\]
This proves Lemma \ref{LemmaAppendix4}.
\end{proof}

Recall from \eqref{D(v)} that 
\[
D(v(u)) := \left| \, \bigdoublecoset{\W_{\E_{v(u)}}}{\W_{\F}}{\W_{\E_{f}}} \,\right|.
\]
The following lemma relates $D(v(u))$ and $\varkappa_{\E_{v(u)}/\F}$.
\begin{lemma}\label{LemmaAppendix5}
For $u\in\mathcal{O}_{\F}^{\times}$, 
    \[
        (-1)^{D(v(u))-n} = \varkappa_{\E_{v(u)}/\F}(-c).
    \]
\end{lemma}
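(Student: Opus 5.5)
The idea is to compute $D(v)$ as a number of irreducible factors of a residual polynomial over $k_{\F}$, and then read off its parity from a discriminant using Lemma~\ref{LemmaAppendix2}(2). Throughout, write $v=v(u)=(cu)^{-1}\in\mathcal{O}_{\F}^{\times}$. Note that $c\in\mathcal{O}_{\F}^{\times}$, since $\bar f$ is irreducible, and that $p$ is odd because $p\nmid 2n$. The argument below works for any $v\in\mathcal{O}_{\F}^{\times}$.

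\emph{Step 1: reduce $D(v)$ to a residual factor count.} Apply Proposition~\ref{PropositionAppendix1} with $\L=\E_v$, $\E=\E_f$ and $\beta=\beta_f$. This gives that $D(v)$ is the number of irreducible factors of $m_{\beta_f}$ over $\E_v$. Substitute $X=\beta_v Y$ and use $\beta_v^n=(v\varpi_{\F})^{-1}$ together with \eqref{MinimalPolynomialMiddle}:
\[
m_{\beta_f}(\beta_vY)=(v\varpi_{\F})^{-2}\,P(Y),\qquad P(Y):=Y^{2n}-dvY^{n}-cv^{2}\in\mathcal{O}_{\F}[Y].
\]
Hence $D(v)$ is also the number of irreducible factors of $P$ over $\E_v$.

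Next I would show that $\bar P$ is separable. Write $\bar P(Y)=g(Y^n)$ with $g(Z)=Z^2-\bar d\bar vZ-\bar c\bar v^2=\bar v^{2}\bar f(Z/\bar v)$. This $g$ is separable and irreducible, and $Z\nmid g$. Since $p\nmid n$, Lemma~\ref{LemmaAppendix2}(1) (after the harmless rescaling by $\bar v$) shows that $\bar P$ is separable over $k_{\F}$.

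Now use that $\E_v/\F$ is totally ramified, so $k_{\E_v}=k_{\F}$. By Hensel's lemma together with Gauss's lemma, the monic irreducible factorization of $P$ over $\mathcal{O}_{\E_v}$ reduces bijectively onto the monic irreducible factorization of $\bar P$ over $k_{\F}$. Consequently $D(v)=r$, the number of irreducible factors of $\bar P$ over $k_{\F}$.

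This step is the one requiring most care. The point to check is that an irreducible factor over $\mathcal{O}_{\E_v}$ cannot reduce to a reducible polynomial. This holds because, $\bar P$ being separable, its pairwise coprime factors lift by Hensel's lemma to a factorization over $\mathcal{O}_{\E_v}$.

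\emph{Step 2: compute the discriminant.} By Lemma~\ref{LemmaAppendix2}(2) we have $\overline{\varepsilon}_{\F}(\disc\bar P)=(-1)^{2n-r}=(-1)^r$. Lemma~\ref{LemmaAppendix3} with $A=-dv$ and $B=-cv^2$ gives
\[
\disc(P)=n^{2n}(-c)^{n-1}v^{4n-2}(d^2+4c)^n .
\]
The factors $n^{2n}$ and $v^{4n-2}$ are squares of units, so they are killed by $\overline{\varepsilon}_{\F}$. The element $d^2+4c$ is the discriminant of $f$, and it is a non-square modulo $\mathcal{P}_{\F}$ because $\bar f$ is irreducible and $p$ is odd. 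Therefore
\[
(-1)^{r}=\varepsilon_{\F}(-c)^{n-1}(-1)^n,\qquad\text{that is,}\qquad (-1)^{D(v)-n}=\varepsilon_{\F}(-c)^{n-1}.
\]

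\emph{Step 3: identify the right-hand side.} The extension $\E_v/\F$ is totally ramified of degree $n$, and $-c\in\mathcal{O}_{\F}^{\times}$. Lemma~\ref{LemmaAppendix1}(3) therefore gives $\varkappa_{\E_v/\F}(-c)=\varepsilon_{\F}(-c)^{n-1}$. Combining this with Step 2 proves the lemma.
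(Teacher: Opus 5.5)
Your proposal is correct and follows the paper's proof essentially step for step. Both arguments use Proposition~\ref{PropositionAppendix1}, the substitution $X=\beta_{v}Y$ and Hensel's lemma to identify $D(v(u))$ with the number of irreducible factors of $\overline{P}$ over $k_{\F}$, then Lemmas~\ref{LemmaAppendix2}(2) and~\ref{LemmaAppendix3} to extract the parity, and Lemma~\ref{LemmaAppendix1}(3) to identify $\varepsilon_{\F}(-c)^{n-1}$ with $\varkappa_{\E_{v(u)}/\F}(-c)$. Your explicit appeals to Lemma~\ref{LemmaAppendix2}(1) for the separability of $\overline{P}$, and to Gauss's lemma for the irreducibility of the Hensel lifts, just fill in details the paper leaves implicit.
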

\begin{proof}
By a direct computation, we see that $\E_f/\F$ is separable.~From Proposition~\ref{PropositionAppendix1}, we have
\[
D(v(u))
=
\#\left\{ \,
\text{irreducible factors of $m_{\beta_f}$ over
$\E_{v(u)}$}
\, \right\}.
\]

Substituting $X=\beta_{v(u)}Y$ into $m_{\beta_f}(X)$ and multiplying by $v(u)^{2}\varpi_{\F}^{2}$ gives
\[
P_{v(u)}(Y) := v(u)^{2}  \varpi_{\F}^{2} \, m_{\beta_f}\left(\beta_{v(u)}Y\right) = Y^{2n}-dv(u)Y^{n}-cv(u)^{2}.
\]
Since $\beta_{v(u)}\in\E_{v(u)}^{\times}$, the polynomials $P_{v(u)}$ and $m_{\beta_f}$ have the same number of irreducible factors over $\E_{v(u)}$.

Reducing $P_{v(u)}$ modulo $\mathcal{P}_{\E_{v(u)}}$ gives
\[
\overline{P}_{v(u)}(Y) = Y^{2n}-\overline{dv(u)}Y^{n}-\overline{cv(u)^{2}} \in k_{\F}[Y]
\]
as $k_{\E_{v(u)}} = k_{\F}$.~Since~$p\nmid 2n$, the polynomial $\overline{P}_{v(u)}$ is separable over $k_{\F}$.~Hence, Hensel's lemma tells us that the monic irreducible factors of $\overline{P}_{v(u)}$ over $k_{\F}$ lift uniquely to the monic irreducible factors of $P_{v(u)}$ over $\E_{v(u)}$.

It follows that
\begin{equation}\label{EquationAppendix1}
D(v(u))
=
\#\left\{ \,
\text{irreducible factors of $\overline{P}_{v(u)}$ over
$k_{\F}$}
\, \right\}.
\end{equation}
Applying (2) of Lemma~\ref{LemmaAppendix2} together with \eqref{EquationAppendix1}, we obtain
\begin{equation}\label{EquationAppendix2}
\overline{\varepsilon}_{\F}
\left(
\disc\left(\overline{P}_{v(u)}\right)
\right)
=
\left(-1\right)^{2n-D(v(u))}
=
\left(-1\right)^{D(v(u))}.
\end{equation}

Next, we set $\delta:= \overline{d^{2}+4c}$.~Using Lemma \ref{LemmaAppendix3} with $A= -\overline{dv(u)}$ and $B= -\overline{cv(u)^{2}}$, we obtain
\begin{align}
\disc\left(\overline{P}_{v(u)}\right)
&=
n^{2n}
\left(-\overline{cv(u)^{2}}\right)^{n-1}
\left(
\overline{d^{2}v(u)^{2}+4cv(u)^{2}}
\right)^{n} \notag\\
&=
n^{2n}
\left(-\overline{c}\right)^{n-1}
\overline{v(u)}^{ \, 4n-2}
\delta^{n}.
\label{EquationAppendix3}
\end{align}

The elements $n^{2n}$ and $\overline{v(u)}^{ \, 4n-2}$ are squares in $k_{\F}^{\times}$.~Moreover, $\delta$ is not a square in $k_{\F}^{\times}$ because $\bar{f}$ is irreducible over $k_{\F}$ and $p$ is odd (by applying Lemma~\ref{LemmaAppendix2}\eqref{LemmaAppendix2.2}).~Hence,
\[
\overline{\varepsilon}_{\F}(\delta)=-1.
\]
Combining this fact with \eqref{EquationAppendix2} and
\eqref{EquationAppendix3}, we have
\begin{align*}
\left(-1\right)^{D(v(u))}
&=
\overline{\varepsilon}_{\F}
\left(
\left(-c\right)^{n-1}\delta^{n}
\right)\\
&=
\varepsilon_{\F}(-c)^{n-1}
\left(-1\right)^{n}.
\end{align*}
Consequently,
\begin{equation}\label{EquationAppendix4}
\left(-1\right)^{D(v(u))-n}
=
\varepsilon_{\F}(-c)^{n-1}.
\end{equation}

From (3) of Lemma~\ref{LemmaAppendix1}, we have
\[
\varkappa_{\E_{v(u)}/\F}(-c)
=
\varepsilon_{\F}(-c)^{n-1}.
\]
The result now follows from \eqref{EquationAppendix4}.
\end{proof}

This leads us to one of the main identities of this appendix.
\begin{proposition}\label{PropositionAppendix2}
For $u\in\mathcal{O}_{\F}^{\times}$, we have
\[
\left(-1\right)^{D(v(u))-n}
\varkappa_{\E_{v(u)}/\F}\left(f(cu)\right)
=
\varkappa_{\E_f/\L_f}
\left(1+u\sigma_f\right).
\]
\end{proposition}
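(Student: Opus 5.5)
The plan is to reduce both sides to the quadratic character $\varepsilon_{\F}$ evaluated at explicit units of $\F$, and then compare those units directly. Throughout, $p\nmid 2n$, so $p$ is odd and the appendix results apply.

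\emph{Left-hand side.} By Lemma~\ref{LemmaAppendix5}, $(-1)^{D(v(u))-n}=\varkappa_{\E_{v(u)}/\F}(-c)$. Since $\varkappa_{\E_{v(u)}/\F}$ is a character, the left-hand side becomes
\[
\varkappa_{\E_{v(u)}/\F}\left(-c\,f(cu)\right).
\]
Both $-c$ and $f(cu)$ lie in $\mathcal{O}_{\F}^{\times}$, because $\bar f$ is irreducible of degree two over $k_{\F}$. Indeed, $\bar c\neq 0$, since otherwise $X$ would divide $\bar f$. Also $\bar f$ has no root in $k_{\F}$, so $\bar f(\overline{cu})\neq 0$. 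The extension $\E_{v(u)}/\F$ is totally ramified of degree $n$, so Lemma~\ref{LemmaAppendix1}(3) gives
\[
\text{left-hand side}=\varepsilon_{\F}\left(-c\,f(cu)\right)^{n-1}.
\]

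\emph{Right-hand side.} First I check that $1+u\sigma_f\in\mathcal{O}_{\L_f}^{\times}$. Let $\sigma_f'$ be the Galois conjugate of $\sigma_f$ over $\F$. Since $\sigma_f,\sigma_f'$ are the two roots of $f$, we have $\sigma_f+\sigma_f'=d$ and $\sigma_f\sigma_f'=-c$. Hence
\[
N_{\L_f/\F}(1+u\sigma_f)=(1+u\sigma_f)(1+u\sigma_f')=1+du-cu^{2},
\]
which will turn out to be a unit by the identity below. Next, $\E_f/\L_f$ is totally ramified of degree $n$, so Lemma~\ref{LemmaAppendix1}(3), applied over the base field $\L_f$, gives $\varkappa_{\E_f/\L_f}|_{\mathcal{O}_{\L_f}^{\times}}=\varepsilon_{\L_f}^{\,n-1}$. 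Combining this with the norm compatibility \eqref{EquationAppendix0}, the right-hand side equals
\[
\varepsilon_{\L_f}(1+u\sigma_f)^{n-1}=\varepsilon_{\F}\left(1+du-cu^{2}\right)^{n-1}.
\]

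\emph{Comparison.} A direct expansion gives
\[
-c\,f(cu)=-c\left(c^{2}u^{2}-dcu-c\right)=c^{2}\left(1+du-cu^{2}\right).
\]
This shows that $1+du-cu^{2}$ is a unit, as claimed. It also shows that the two arguments of $\varepsilon_{\F}$ differ by the square $c^{2}$ of a unit, on which $\varepsilon_{\F}$ is trivial. The two sides therefore agree.

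The step that needs the most care is applying Lemma~\ref{LemmaAppendix1}(3) over the base $\L_f$ rather than $\F$, and confirming that every argument is a unit so that $\varkappa$ on $\mathcal{O}^{\times}$ is detected by $\varepsilon$. Once these units are in place, the remaining work is the one-line polynomial identity above.
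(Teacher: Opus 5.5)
Your proof is correct and follows essentially the same route as the paper. Both use Lemma~\ref{LemmaAppendix5} to absorb the sign, the norm computation $N_{\L_f/\F}(1+u\sigma_f)=1+du-cu^{2}$, the identity $-c\,f(cu)=c^{2}N_{\L_f/\F}(1+u\sigma_f)$, and Lemma~\ref{LemmaAppendix1}(3) over both $\F$ and $\L_f$ together with \eqref{EquationAppendix0}. The only difference is cosmetic: you reduce both sides to $\varepsilon_{\F}$ before comparing, and you explicitly check that every argument is a unit, which the paper leaves implicit. The paper instead first removes $c^{2}$ using that $\varkappa_{\E_{v(u)}/\F}$ is quadratic, and then proves $\varkappa_{\E_{v(u)}/\F}\circ N_{\L_f/\F}=\varkappa_{\E_f/\L_f}$ on all of $\mathcal{O}_{\L_f}^{\times}$.
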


\begin{proof}
From Lemma~\ref{LemmaAppendix5}, we have
\begin{align}
\left(-1\right)^{D(v(u))-n}
\varkappa_{\E_{v(u)}/\F}\left(f(cu)\right)
&=
\varkappa_{\E_{v(u)}/\F}(-c)
\varkappa_{\E_{v(u)}/\F}\left(f(cu)\right)
\notag\\
&=
\varkappa_{\E_{v(u)}/\F}
\left(-cf(cu)\right).
\label{EquationAppendix5}
\end{align}
Next, we note that the roots of $f$ are $\sigma_f$ and $d-\sigma_f$.~Since $\L_f/\F$ is Galois, it follows that
\begin{align}
N_{\L_f/\F}
\left(1+u\sigma_f\right)
&=
\left(1+u\sigma_f\right)
\left(1+u\left(d-\sigma_f\right)\right)
\notag\\
&=
1+u\left(
\sigma_f+d-\sigma_f
\right)
+
u^{2}\sigma_f\left(d-\sigma_f\right)
\notag\\
&=
1+du-cu^{2}.
\label{EquationAppendix6}
\end{align}

On the other hand,
\[
f(cu)
=
c^{2}u^{2}-cdu-c
=
-c\left(1+du-cu^{2}\right).
\]
Combining this identity with \eqref{EquationAppendix6}, we obtain
\begin{equation}\label{EquationAppendix7}
-cf(cu)
=
c^{2}
N_{\L_f/\F}
\left(1+u\sigma_f\right).
\end{equation}
Hence, \eqref{EquationAppendix5} and \eqref{EquationAppendix7} give
\begin{equation}\label{EquationAppendix8}
\left(-1\right)^{D(v(u))-n}
\varkappa_{\E_{v(u)}/\F}\left(f(cu)\right)
=
\varkappa_{\E_{v(u)}/\F}
\left(
N_{\L_f/\F}
\left(1+u\sigma_f\right)
\right).
\end{equation}

We claim that for every $x \in \mathcal{O}_{\L_f}^{\times}$,
\begin{equation}\label{EquationAppendix9}
\varkappa_{\E_{v(u)}/\F}
\left(
N_{\L_f/\F}(x)
\right)
=
\varkappa_{\E_f/\L_f}(x).
\end{equation}
By (3) of Lemma \ref{LemmaAppendix1}, we have that for $x\in\mathcal{O}_{\L_f}^{\times}$,
\begin{align*}
\varkappa_{\E_{v(u)}/\F}
\left(
N_{\L_f/\F}(x)
\right)
&=
\varepsilon_{\F}
\left(
N_{\L_f/\F}(x)
\right)^{n-1}\\
&=
\varepsilon_{\L_f}(x)^{n-1}\\
&=
\varkappa_{\E_f/\L_f}(x),
\end{align*}
where the second equality follows from
\eqref{EquationAppendix0}.~This proves our claim.

Taking $x=1+u\sigma_f$ in
\eqref{EquationAppendix9} and applying
\eqref{EquationAppendix8}, we conclude that
\[
\left(-1\right)^{D(v(u))-n}
\varkappa_{\E_{v(u)}/\F}\left(f(cu)\right)
=
\varkappa_{\E_f/\L_f}
\left(1+u\sigma_f\right). \qedhere
\]
\end{proof}

The next main identity we obtain is the following. 
\begin{proposition}\label{PropositionAppendix3}
The quasi-character $\xi_{(f, \, \chi, \, \zeta)}$ satisfies
\[
\xi_{(f, \, \chi, \, \zeta)}(\sigma_f)
=
\varkappa_{\E_f/\L_f}(\sigma_f) \, 
\chi(\sigma_f).
\]
\end{proposition}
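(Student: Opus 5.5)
The plan is to reduce the identity to a statement about Langlands constants and the characters~$\varkappa$. Only three facts are used: Corollary~\ref{UniformizerCorollary}, the central character condition~\eqref{CentralCharacterCondition}, and the relation $\sigma_f=\beta_f^{n}\varpi_{\F}$. Writing $\xi=\xi_{(f,\,\chi,\,\zeta)}$, we have
\[
\xi(\sigma_f)=\xi(\beta_f)^n\,\xi(\varpi_{\F})=\zeta^{n}\lambda_{\E_f/\F}(\psi_{\F})^{n}\,\xi(\varpi_{\F}).
\]
By \eqref{CentralCharacterCondition} and the formula $\Lambda_{(f,\,\chi,\,\zeta)}(\varpi_{\F})=\zeta^{-n}\chi(\sigma_f)$ for the central character, we get $\xi(\varpi_{\F})=\zeta^{-n}\chi(\sigma_f)\,\varkappa_{\E_f/\F}(\varpi_{\F})$, using that $\varkappa_{\E_f/\F}$ is at most quadratic. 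Hence
\[
\xi(\sigma_f)=\lambda_{\E_f/\F}(\psi_{\F})^{n}\,\varkappa_{\E_f/\F}(\varpi_{\F})\,\chi(\sigma_f),
\]
and the task reduces to proving
\[
\lambda_{\E_f/\F}(\psi_{\F})^{n}\,\varkappa_{\E_f/\F}(\varpi_{\F})=\varkappa_{\E_f/\L_f}(\sigma_f).
\]

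Next, pass to the tower $\F\subset\L_f\subset\E_f$. By \eqref{LanglandsConstantTower} and \eqref{LanglandsConstantUnramified}, $\lambda_{\E_f/\F}(\psi_{\F})=(-1)^n\lambda_{\E_f/\L_f}(\psi_{\L_f})$. By Lemma~\ref{LemmaAppendix1}(2), $\varkappa_{\E_f/\F}(\varpi_{\F})=\varkappa_{\L_f/\F}(\varpi_{\F})^n\,\varkappa_{\E_f/\L_f}(\varpi_{\F})$. Here $\varkappa_{\L_f/\F}$ is the unramified quadratic character, so $\varkappa_{\L_f/\F}(\varpi_{\F})=-1$. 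The signs $(-1)^{n^2+n}$ cancel, and it remains to show
\[
\lambda_{\E_f/\L_f}(\psi_{\L_f})^{n}=\varkappa_{\E_f/\L_f}\bigl(\sigma_f\varpi_{\F}^{-1}\bigr)=\varkappa_{\E_f/\L_f}(\beta_f^{n}),
\]
where $\beta_f^n\in\L_f$. I then split according to the parity of $n$.

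\emph{Case $n$ even.} By \cite[Proposition 10.1.6 (ii)]{BF}, there is a field $\K$ with $[\E_f:\K]=2$ and $\varkappa_{\E_f/\L_f}=\varkappa_{\E_f/\K}|_{\L_f^\times}$. I take $\K=\L_f[\beta_f^2]$. Since $\E_f/\K$ is quadratic, $\varkappa_{\E_f/\K}$ is the quadratic character trivial on norms, and $N_{\E_f/\K}(\beta_f)=-\beta_f^2$. Hence $(-\beta_f^2)^{n/2}=(-1)^{n/2}\beta_f^n\in\L_f$ is a norm from $\E_f$, which gives $\varkappa_{\E_f/\L_f}(\beta_f^n)=\varkappa_{\E_f/\L_f}(-1)^{n/2}$. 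On the other side, Lemma~\ref{LemmaAppendix1}(1) gives $\lambda_{\E_f/\L_f}(\psi_{\L_f})^n=\varkappa_{\E_f/\L_f}(-1)^{n/2}$, so the two sides agree.

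\emph{Case $n$ odd.} By Lemma~\ref{LemmaAppendix1}(3), $\varkappa_{\E_f/\L_f}$ is unramified, so $\varkappa_{\E_f/\L_f}(-1)=1$. Then Lemma~\ref{LemmaAppendix1}(1) gives $\lambda_{\E_f/\L_f}(\psi_{\L_f})=\pm1$, so $\lambda^n=\lambda$. Since $\beta_f^n$ has $\L_f$-valuation $-1$, the identity becomes $\lambda_{\E_f/\L_f}(\psi_{\L_f})=\varkappa_{\E_f/\L_f}(\varpi_{\L_f})$ for a tame, totally ramified extension of odd degree. This is the main obstacle. My first attempt is to use the explicit formulae for odd-degree tame totally ramified extensions in \cite[Section 34]{BH-7} and \cite[Proposition 10.1.6 (i)]{BF}: both $\lambda_{\E_f/\L_f}(\psi_{\L_f})$ and $\varkappa_{\E_f/\L_f}(\varpi_{\L_f})$ should equal the Jacobi symbol $\left(\frac{q_{\L_f}}{n}\right)$. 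This holds as long as the level-one normalization of $\psi_{\L_f}$ introduces no extra sign, which must be checked. Since $q_{\L_f}=q_{\F}^2$ is a square, I expect both sides to equal $1$, which would also make the computation very clean.
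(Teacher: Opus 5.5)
Your proposal is correct and is essentially the paper's proof. It uses the same reduction, via Corollary~\ref{UniformizerCorollary}, \eqref{CentralCharacterCondition}, \eqref{LanglandsConstantTower} and Lemma~\ref{LemmaAppendix1}(2), to the identity $\lambda_{\E_f/\L_f}(\psi_{\L_f})^{n}=\varkappa_{\E_f/\L_f}(\beta_f^{n})$, which the paper simply quotes from \cite{BF, Moy}. The paper's $\varpi_{\L_f}=\varpi_{\F}/\sigma_f$ equals $\beta_f^{-n}$, which gives the same value because $\varkappa_{\E_f/\L_f}$ is quadratic.

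For $n$ even, your norm argument with $\K=\L_f[\beta_f^{2}]$ actually proves this identity. For $n$ odd, you rely on the literature, as the paper does. Your worry about the level-one normalization there is harmless. When $n$ is odd, $\varkappa_{\E_f/\L_f}$ is unramified, and a Frobenius element fixing $\beta_f^{-1}$ permutes its conjugates $\beta_f^{-1}\mu_n$ via $x\mapsto x^{q_{\F}^{2}}$ on $\mu_n$. That permutation is the square of a permutation, hence even, so $\varkappa_{\E_f/\L_f}$ is trivial and $\lambda_{\E_f/\L_f}(\psi)$ does not depend on the choice of $\psi$.
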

\begin{proof}
Recall that $\sigma_f=\beta_f^{n} \, \varpi_{\F}$.~By Corollary~\ref{UniformizerCorollary},
\[
\xi_{(f, \, \chi, \, \zeta)}(\beta_f)
=
\zeta \cdot \lambda_{\E_f/\F}(\psi_{\F}).
\]
Moreover, \eqref{CentralCharacterCondition} and the fact that
$\varkappa_{\E_f/\F}$ has order dividing two give
\[
\xi_{(f, \, \chi, \, \zeta)}
 \, \big|_{ \, \F^{\times}}
=
\omega_{(f, \, \chi, \, \zeta)}
\otimes
\varkappa_{\E_f/\F}.
\]
It follows that
\begin{align}
\xi_{(f, \, \chi, \, \zeta)}(\sigma_f)
&=
\xi_{(f, \, \chi, \, \zeta)}(\beta_f)^{n} \, \xi_{(f, \, \chi, \, \zeta)}(\varpi_{\F})
\notag\\
&=
\lambda_{\E_f/\F}(\psi_{\F})^{n} \, \zeta^{n} \, \omega_{(f, \, \chi, \, \zeta)}(\varpi_{\F}) \, \varkappa_{\E_f/\F}(\varpi_{\F})
\notag\\
&=
\lambda_{\E_f/\F}(\psi_{\F})^{n} \, \varkappa_{\E_f/\F}(\varpi_{\F}) \, \chi(\sigma_f).
\label{EquationAppendix10}
\end{align}

Set 
\[
\varpi_{\L_f} := \frac{\varpi_{\F}}{\sigma_f},
\]
which is a uniformizer of $\L_f$.~Using \eqref{LanglandsConstantTower} and (2) of Lemma \ref{LemmaAppendix1}, we have
\begin{align*}
&\lambda_{\E_f/\F}(\psi_{\F})^{n} \, \varkappa_{\E_f/\F}(\varpi_{\F}) \\
&\qquad=
\lambda_{\L_f/\F}(\psi_{\F})^{n^{2}} \, 
\lambda_{\E_f/\L_f}
\left(\psi_{\L_f}\right)^{n} \,
\varkappa_{\L_f/\F}(\varpi_{\F})^{n} \,
\varkappa_{\E_f/\L_f}(\varpi_{\F}).
\end{align*}
Since $\L_f/\F$ is unramified quadratic,
\eqref{LanglandsConstantUnramified} gives
\[
\lambda_{\L_f/\F}(\psi_{\F})=-1.
\]
Furthermore, \cite[Proposition 10.1.5]{BF} tells us that $\varkappa_{\L_f/\F}$ is non-trivial and unramified.~This implies
\[
\varkappa_{\L_f/\F}(\varpi_{\F})=-1.
\]

From \cite{BF, Moy}, we have that
\[
\lambda_{\E_f/\L_f}
\left(\psi_{\L_f}\right)^{n}
=
\varkappa_{\E_f/\L_f}
\left(\varpi_{\L_f}\right).
\]
Therefore,
\[
\lambda_{\E_f/\F}(\psi_{\F})^{n}
\varkappa_{\E_f/\F}(\varpi_{\F}) = \left(-1\right)^{n^{2}+n}
\varkappa_{\E_f/\L_f}
\left(\varpi_{\L_f}\right)
\varkappa_{\E_f/\L_f}(\varpi_{\F}) = \varkappa_{\E_f/\L_f}(\sigma_f).
\]
Substituting this identity into \eqref{EquationAppendix10} completes the proof of Proposition \ref{PropositionAppendix3}.
\end{proof}

\end{document}